\documentclass[10pt]{article}
\usepackage{amsfonts,amssymb,amsmath,amsthm}
\usepackage{cite}
\usepackage{needspace}
\usepackage[dvipsnames]{xcolor}
\usepackage{pgfplots}
\pgfplotsset{compat=1.18}

\definecolor{gapBoundary}{RGB}{0,114,178}
\definecolor{gapThreshold}{RGB}{213,94,0}
\definecolor{gapStable}{RGB}{0,135,100}

\usepackage[
    colorlinks=true,
    linkcolor=black,
    urlcolor=blue,
    citecolor=ForestGreen
]{hyperref}
\hypersetup{
    pdftitle={The principal spectral gap for birth--death processes with a strong Allee effect},
    pdfauthor={Dun Zhou}
}
\renewcommand{\eqref}[1]{%
    \textup{(\hyperref[#1]{\textcolor{red}{\ref*{#1}}})}%
}
\DeclareRobustCommand{\resultref}[2]{%
    #1\hyperref[#2]{\textcolor{red}{\ref*{#2}}}%
}
\newtheorem{theorem}{Theorem}[section]
\newtheorem{lemma}[theorem]{Lemma}
\newtheorem{proposition}[theorem]{Proposition}

\numberwithin{equation}{section}
\newtheorem{mainthm}{Theorem}

\newtheorem{maincor}[mainthm]{Corollary}

\begin{document}
\setlength{\baselineskip}{13pt}

\title{The principal spectral gap for birth--death processes
with a strong Allee effect}

\author{Dun Zhou\thanks{Email: zhoudun@njust.edu.cn. Supported by the National Natural Science Foundation of China
(Grant Nos.~12671213 and~12331006).}\\
School of Mathematics and Statistics\\
Nanjing University of Science and Technology\\
Nanjing 210094, China\\}
\date{}

\maketitle

\begin{abstract}
For birth--death processes with a strong Allee effect, long survival near
a positive stable equilibrium does not by itself determine the principal
spectral gap. We identify its large-population limit when the birth and
death rates at population size $n$ are $n\widetilde\lambda(n/K)$ and
$n\widetilde\mu(n/K)$, where $K$ is the population scale and
$\widetilde\lambda,\widetilde\mu$ are the per-capita rates.
Write $V(x)=x(\widetilde\lambda(x)-\widetilde\mu(x))$ for the
deterministic drift, with unstable threshold $x_1$ and positive stable
equilibrium $x_2$. For the two smallest eigenvalues
$\rho_1(K)<\rho_2(K)$ of the negative generator killed at extinction,
we prove
\[
\lim_{K\to\infty}\bigl(\rho_2(K)-\rho_1(K)\bigr)
=\min\{\widetilde\mu(0)-\widetilde\lambda(0),V'(x_1),-V'(x_2)\},
\]
so the limit depends on the linearization rates at all three equilibria.
Each term can be the unique minimum even when $x_1$ and $x_2$ are fixed.
The proof combines a frozen boundary model with two local oscillator
limits through discrete Ismagilov--Morgan--Simon (IMS) localization.
A uniform comparison of the global and stable local ground states
controls the orthogonality constraint in the variational lower bound;
projected local trial vectors give the matching upper bound.
The result determines the limiting $L^2$ spectral gap and optimal
Poincar\'e constant of the $Q$-process, which describes conditioning on
indefinite survival. Combined with principal-eigenvalue asymptotics
from a companion work, it separates an exponentially growing
quasi-stationary mean extinction time from a spectral relaxation time
with a finite positive limit.
\end{abstract}

\medskip
\noindent\textbf{Keywords.} Birth--death process; strong Allee effect;
spectral gap; discrete IMS localization; quasi-stationary distribution.

\smallskip
\noindent\textbf{2020 Mathematics Subject Classification.}
Primary 60J27; Secondary 47A75, 60J46.

\tableofcontents
\section{Introduction}
\label{sec:introduction}
\label{subsec:motivation}
Birth--death processes describe population dynamics through individual
births and deaths. Randomness in these events can lead to extinction
even when the deterministic model predicts persistence. Ovaskainen
and Meerson \cite[Boxes~2 and~3]{Ovaskainen-Meerson2010} review how
the master equation, diffusion approximation, and WKB method describe
such fluctuations and estimate extinction times. For the models
studied here, absorption at state $0$ occurs almost surely from every
positive state, so the only stationary distribution of the full
process is the point mass $\delta_0$. To describe the population
before extinction, one instead studies the process conditioned on
survival and the spectrum of its killed generator.

Karlin and McGregor \cite{Karlin-McGregor1957} express the transition
probabilities of birth--death processes through orthogonal polynomials
and a spectral measure. Using this representation and a duality
construction, van Doorn \cite[Sections~3--5]{vanDoorn1991}
characterizes the quasi-stationary distributions, identifies the
conditional limit for finitely supported initial laws, and determines
the exponential rate at which the conditional transition probabilities
approach their limits. Collet et~al.\ \cite[Sections~5.2--5.5 and~5.7]{Collet-Martinez-SanMartin2013}
give a systematic account of these spectral methods, the role of the
boundary at infinity, and conditions ensuring a discrete spectrum.

A quasi-stationary distribution is a probability law on the positive
states that is invariant under conditioning on survival
\cite[Definition~3]{Meleard-Villemonais2012}. The extinction time
under this law is exponential \cite[Proposition~2]{Meleard-Villemonais2012};
in our notation, its survival probability is $e^{-\rho_1(K)t}$ and
its mean is $\rho_1(K)^{-1}$. Relaxation among surviving trajectories
is described by the $Q$-process. Champagnat and Villemonais
\cite[Theorems~2.1 and~3.1]{Champagnat-Villemonais2016} give criteria
for uniform exponential convergence to a unique quasi-stationary
distribution and show that these criteria also yield the $Q$-process
as a limit of conditioning on survival to increasingly distant times.
Their Theorem~3.1(ii) identifies its transition semigroup as a Doob
transform by a positive eigenfunction. In the reversible setting
considered here, the negative generator of the $Q$-process has
eigenvalues $\rho_j(K)-\rho_1(K)$, $j\ge1$.
Consequently, $\rho_2(K)-\rho_1(K)$ is the $L^2$ spectral gap of the
$Q$-process and gives its optimal exponential relaxation rate. We call
this difference the \emph{principal spectral gap} associated with
the killed generator.

We consider populations with a strong Allee effect. The biological
motivation comes from the benefits of aggregation studied by Allee
\cite{Allee1931} and the mechanisms of positive density dependence
at low abundance reviewed by Courchamp et~al.\
\cite{Courchamp-CluttonBrock-Grenfell1999}. Stephens et~al.\
\cite[p.~186]{Stephens-Sutherland-Freckleton1999} distinguish effects
on individual fitness components from effects on population growth;
the former need not imply the latter. Here the Allee effect acts on
the population growth rate and is strong: the per-capita growth rate
is negative below a positive threshold and positive immediately above
it, as in \cite[Chapter~1]{Courchamp-Berec-Gascoigne2008}.
In the deterministic limit, the attracting equilibria $0$ and $x_2>0$
are separated by an unstable threshold $x_1\in(0,x_2)$. Trajectories
below $x_1$ decline towards extinction, whereas those above it
approach $x_2$. For stochastic diffusion models, Dennis
\cite[pp.~392--393]{Dennis2002} relates such thresholds to the
probability of reaching a lower population level before an upper one,
showing how fluctuations modify the deterministic distinction between
decline and recovery.

In the discrete model, demographic fluctuations can carry a population
near $Kx_2$ across the threshold and eventually to extinction.
Assaf and Meerson \cite[Section~V]{Assaf-Meerson2010} study this
extinction scenario by matching WKB approximations of the master
equation. They obtain the quasi-stationary profile and the exponential
term and prefactor of the mean extinction time; their Eq.~(95)
gives the formula for single-step processes. For the class of
birth--death processes considered here, Hou et~al.\
\cite[Theorem~A]{Hou-Yan-Zhou2026} prove sharp asymptotics for the
principal eigenvalue with a quantitative error bound and estimates
on its eigenfunction. Their Theorem~D describes intermediate-time
population laws as a mixture of extinction and quasi-stationarity,
with weights depending on the initial state.

The remaining issue is to identify which part of this bistable
geometry determines the $L^2$ relaxation rate of the $Q$-process.
The normalized ground state of the associated Jacobi
operator concentrates in $\ell^2$ near $Kx_2$, but the gap can be
determined by the absorbing boundary or the unstable threshold.
We prove that its limit is the minimum of the three local spectral
rates associated with $0$, $x_1$, and $x_2$. All three terms are
necessary for this class of models: each can be the unique minimum
even when the two positive equilibria are kept fixed.

The proof uses discrete IMS localization to combine the three local
estimates. Its key point is a uniform control of the stable local
ground-state component for vectors orthogonal to the global ground
state. This allows a direct proof of the gap limit without first
establishing convergence of the full spectrum. The sharp
principal-eigenvalue asymptotics from \cite{Hou-Yan-Zhou2026} enter
only in the subsequent comparison of extinction and relaxation times
in \resultref{Corollary~}{cor:two-scale-spectrum}.

\subsection{Model and assumptions}
\label{subsec:model}
For each $K\geq1$, let
\[
    X^K=(X_t^K)_{t\geq0}
\]
be a continuous-time birth--death process on
$\mathbb N=\{0,1,2,\ldots\}$, with $0$ absorbing. On
$\mathbb N^*=\{1,2,\ldots\}$, its birth and death rates are
\[
    \lambda_n^{(K)}
    =
    n\widetilde\lambda\left(\frac nK\right),
    \qquad
    \mu_n^{(K)}
    =
    n\widetilde\mu\left(\frac nK\right),
    \qquad n\geq1.
\]
Here $K$ is the population scaling parameter, and
$\widetilde\lambda,\widetilde\mu$ denote positive per-capita rate
functions defined on $\mathbb R_+=[0,\infty)$.
We write $\tau_0:=\inf\{t\ge0:X_t^K=0\}$ for the extinction time;
its dependence on $K$ is suppressed. The probability and expectation
for initial state $n$ are denoted by $\mathbb P_n$ and $\mathbb E_n$,
with an initial law in place of $n$ when appropriate.
We use the rate hypotheses {\rm(A1)}--{\rm(A3)} and the potential
condition {\rm(H)} from the companion work \cite{Hou-Yan-Zhou2026}.

\begin{enumerate}
\item[{\rm(A1)}]
The functions $\widetilde\lambda$ and $\widetilde\mu$ belong to
$C^2(\mathbb R_+)$, and
\[
    0<\widetilde\lambda(0)<\widetilde\mu(0).
\]
\item[{\rm(A2)}]
Both $\widetilde\lambda$ and $\widetilde\mu$ are increasing.
The two rate functions intersect at exactly two points $0<x_1<x_2$:
\[
    \widetilde\lambda(x_i)=\widetilde\mu(x_i),
    \qquad i=1,2.
\]
Both intersections are transverse, with
\[
    \widetilde\lambda'(x_1)>\widetilde\mu'(x_1),
    \qquad
    \widetilde\lambda'(x_2)<\widetilde\mu'(x_2).
\]
Moreover,
\[
\begin{cases}
    \widetilde\lambda(x)<\widetilde\mu(x),
        &x\in(0,x_1),\\[1mm]
    \widetilde\lambda(x)>\widetilde\mu(x),
        &x\in(x_1,x_2),\\[1mm]
    \widetilde\lambda(x)<\widetilde\mu(x),
        &x\in(x_2,\infty),
\end{cases}
\]
and the function
\[
    x\longmapsto
    \ln\frac{\widetilde\mu(x)}{\widetilde\lambda(x)}
\]
is strictly decreasing on $(0,x_1)$.
\item[{\rm(A3)}]
The rate functions satisfy the following conditions at infinity:
\[
    \lim_{x\to\infty}
    \frac{\widetilde\lambda(x)}{\widetilde\mu(x)}
    =0,
    \qquad
    \sup_{x\in\mathbb R_+}
    \frac{\widetilde\mu'(x)}{\widetilde\mu(x)}
    <\infty,
\]
and
\[
    \int_{x_2}^{\infty}
    \frac{dx}{x\widetilde\mu(x)}
    <\infty.
\]
\end{enumerate}

Define the reversible weights by
\[
    \pi_1^{(K)}:=\frac{1}{\mu_1^{(K)}}
\]
and
\[
    \pi_n^{(K)}
    :=
    \frac{
        \lambda_1^{(K)}\cdots\lambda_{n-1}^{(K)}
    }{
        \mu_1^{(K)}\cdots\mu_n^{(K)}
    },
    \qquad n\geq2.
\]
They satisfy the detailed-balance identity
\[
    \lambda_n^{(K)}\pi_n^{(K)}
    =
    \mu_{n+1}^{(K)}\pi_{n+1}^{(K)},
    \qquad n\geq1.
\]

Assumption {\rm(A3)} also implies that, for every fixed $K\ge1$,
\begin{equation}\label{eq:reversible-weight-decay}
    \lim_{n\to\infty}\bigl(\mu_n^{(K)}\bigr)^2\pi_n^{(K)}=0.
\end{equation}
The proof is given in Subsection~\ref{subsec:self-adjoint-realization}.

Define the potential
\[
    H(x)
    :=
    \int_{x_2}^{x}
    \ln\frac{\widetilde\mu(s)}{\widetilde\lambda(s)}\,ds,
    \qquad x\in\mathbb R_+.
\]
We also assume
\begin{equation*}
\tag{H}\label{assumption-H}
    H\in C^3(\mathbb R_+),
    \qquad
    \sup_{x\in\mathbb R_+}
    (1+x^2)|H'''(x)|<\infty.
\end{equation*}

The corresponding deterministic ordinary differential equation is
\[
    \dot x=V(x),
    \qquad
    V(x)
    :=
    x\bigl(
        \widetilde\lambda(x)-\widetilde\mu(x)
    \bigr).
\]
Under {\rm(A1)}--{\rm(A2)}, $0$ and $x_2$ are locally
asymptotically stable, whereas $x_1$ is unstable. In particular,
\[
    V'(0)
    =
    \widetilde\lambda(0)-\widetilde\mu(0)<0,
    \qquad
    V'(x_1)>0,
    \qquad
    V'(x_2)<0.
\]
Furthermore,
\[
    H'(x_i)=0,
    \qquad
    H''(x_i)
    =
    -\frac{V'(x_i)}{x_i\widetilde\lambda(x_i)},
    \qquad i=1,2.
\]
Thus,
\[
    H''(x_1)<0,
    \qquad
    H''(x_2)>0,
\]
so $H$ has a non-degenerate local maximum at $x_1$ and a
non-degenerate local minimum at $x_2$, with
$H(x_2)=0$.

In the analysis below, hypotheses {\rm(A1)}--{\rm(A2)} specify the behavior near the extinction boundary and the non-degeneracy of the two interior equilibria.
{\rm(A3)} gives the tail bounds used to prove compactness of the resolvent. The local harmonic-oscillator analysis requires only $C^2$ regularity of the per-capita rates near $x_1$ and $x_2$. The weighted derivative bound in {\rm(H)} is used through the sharp principal-eigenvalue asymptotics in
\resultref{Corollary~}{cor:two-scale-spectrum}.
All main results are stated under {\rm(A1)}--{\rm(A3)} and {\rm(H)},
which also suffice for the principal-eigenvalue and quasi-stationary
results of \cite{Hou-Yan-Zhou2026}.
The spectral-gap proof itself does not use {\rm(H)}.

\subsection{Main results}
\label{subsec:main-results}
The generator killed upon hitting $0$ is given by
\[
    (L_Kf)_n
    =
    \lambda_n^{(K)}(f_{n+1}-f_n)
    +
    \mu_n^{(K)}(f_{n-1}-f_n),
    \qquad n\geq1,
\]
with the Dirichlet boundary condition $f_0=0$. Under our standing assumptions,
$L_K$ has a self-adjoint realization on $\ell^2(\pi^{(K)})$
with compact resolvent; its construction and compactness argument appear in Subsection~\ref{subsec:self-adjoint-realization}. We denote
the eigenvalues of $-L_K$ by
\begin{equation}
\label{eq:killed-spectrum}
    0<\rho_1(K)<\rho_2(K)<\rho_3(K)<\cdots.
\end{equation}
A local trial state constructed in
\resultref{Lemma~}{lem:x2-window-kernel-gap}(i) gives
\begin{equation}
\label{eq:rho1-vanishes}
    \rho_1(K)\longrightarrow0
    \qquad\text{as }K\to\infty.
\end{equation}
We quote the sharp asymptotic formula from
\cite[Theorem~A]{Hou-Yan-Zhou2026} in
\resultref{Corollary~}{cor:two-scale-spectrum}.

Define the three local rates by
\begin{equation}
\label{eq:local-rates}
\begin{aligned}
    \gamma_0
    &:={}
    \widetilde\mu(0)-\widetilde\lambda(0)
    =-V'(0),\\
    \gamma_1
    &:={}
    V'(x_1)
    =-x_1\widetilde\lambda(x_1)H''(x_1),\\
    \gamma_2
    &:={}
    -V'(x_2)
    =x_2\widetilde\lambda(x_2)H''(x_2).
\end{aligned}
\end{equation}
All three constants are strictly positive. Set
\[
    \gamma_*:=\min\{\gamma_0,\gamma_1,\gamma_2\}.
\]

The main result is the following.
\begin{mainthm}[Limit of the principal spectral gap]
\label{thm:spectral-gap-limit}
Suppose that {\rm(A1)}--{\rm(A3)} and {\rm(H)} hold. Then
\[
    \lim_{K\to\infty}
    \bigl(\rho_2(K)-\rho_1(K)\bigr)
    =
    \gamma_*
    =
    \min\left\{
        \widetilde\mu(0)-\widetilde\lambda(0),
        V'(x_1),
        -V'(x_2)
    \right\}.
\]
\end{mainthm}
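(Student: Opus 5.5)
The plan is to pass to the unitarily equivalent symmetric Jacobi operator. Conjugating $-L_K$ by $f\mapsto(\pi_n^{(K)})^{1/2}f_n$ gives an operator $J_K$ on $\ell^2(\mathbb N^*)$ with diagonal $\lambda_n^{(K)}+\mu_n^{(K)}$ and off-diagonal entries $-(\lambda_n^{(K)}\mu_{n+1}^{(K)})^{1/2}$. By the min--max principle,
\[
\rho_2(K)=\sup_{\dim W\le1}\ \inf_{u\perp W,\ \|u\|=1}\langle u,J_Ku\rangle .
\]
Since $\rho_1(K)\to0$ by \eqref{eq:rho1-vanishes}, it suffices to show $\rho_2(K)\to\gamma_*$.

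First I identify the three local models.
\begin{itemize}
\item Near $n=0$, freeze the rates at $n\widetilde\lambda(0)$ and $n\widetilde\mu(0)$. This gives the linear (subcritical) birth--death operator, whose killed spectrum is $\{k\gamma_0:k\ge1\}$, for instance via generating functions or Karlin--McGregor. Its bottom is $\gamma_0$.
\item Near $Kx_i$, rescale $n=Kx_i+\sqrt K\,y$. A second-order Taylor expansion of $\widetilde\lambda$ and $\widetilde\mu$, using only $C^2$ regularity, turns $J_K$ into a discrete approximation of the harmonic oscillator
\[
-a_i\partial_y^2+\tfrac14 a_i H''(x_i)^2y^2+c_i,\qquad a_i=x_i\widetilde\lambda(x_i),
\]
with a constant $c_i$ coming from the first-order terms.
\item At $x_2$ the ground energy is $0$, matching the fact that $\pi^{1/2}$ is approximately a Gaussian there. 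The spectrum is therefore $\{k\gamma_2:k\ge0\}$.
\item At $x_1$ the sign of $H''$ flips and the constant shifts by $\gamma_1$. The spectrum is $\{(k+1)\gamma_1:k\ge0\}$, with bottom $\gamma_1$.
\end{itemize}
Each of these is quantified on windows of width $\sqrt K\,R$ (or $R$ at the boundary) with errors $o(1)$ as $K\to\infty$ followed by $R\to\infty$.

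\emph{Upper bound.} Take trial vectors $u_0$, $u_1$, $u_2$: the cut-off first excited state at $x_2$, the cut-off ground state at $x_1$, and the cut-off boundary ground state. They have disjoint supports and quadratic forms at most $\gamma_2+o(1)$, $\gamma_1+o(1)$, $\gamma_0+o(1)$ respectively. For the vector $\psi$ that minimizes, project off the global ground state $\phi_K$: $\tilde u=\psi-\langle\psi,\phi_K\rangle\phi_K$. Then $\langle\tilde u,J_K\tilde u\rangle\le\langle\psi,J_K\psi\rangle$, and $\|\tilde u\|^2=1-|\langle\psi,\phi_K\rangle|^2$.

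So I need $\langle\psi,\phi_K\rangle\to0$. For $u_0$ and $u_1$ this holds because $\phi_K$ concentrates near $Kx_2$ in $\ell^2$. Concretely, $\langle\phi_K,J_K\phi_K\rangle=\rho_1\to0$, and an IMS lower bound forces mass away from $x_2$ to cost $\gtrsim\min(\gamma_0,\gamma_1)$. For $u_2$ I use the comparison of $\phi_K$ with the local Gaussian ground state $g_2$ described next, together with $u_2\perp g_2$. This gives $\limsup\rho_2\le\gamma_*$.

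\emph{Lower bound.} Choose a discrete partition of unity $\chi_0^2+\chi_1^2+\chi_2^2+\chi_\infty^2=1$. Here $\chi_0$ lives on $n\le R$, $\chi_i$ on $|n-Kx_i|\le R\sqrt K$, and $\chi_\infty$ on the rest. The slopes are of order $1/R$ in the boundary region and $1/(R\sqrt K)$ in the $x_i$ windows. The discrete IMS formula gives
\[
\langle u,J_Ku\rangle=\sum_j\langle\chi_ju,J_K\chi_ju\rangle-\sum_n b_n^{(K)}\sum_j(\chi_j(n+1)-\chi_j(n))^2u_nu_{n+1},
\]
where $b_n^{(K)}=(\lambda_n^{(K)}\mu_{n+1}^{(K)})^{1/2}$. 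Since $b_n=O(n)$, the localization error is $O(1/R)$ near $0$ and $O(x_iK/(R^2K))=O(R^{-2})$ near $x_i$. In the transition zone $R\le n\le\epsilon K$ the slopes must be chosen logarithmically, i.e.\ $\chi$ a function of $\ln n$, to keep $n\,|\Delta\chi|^2$ small.

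On the complement region the form is bounded below by $c\,(\text{local drift})^2/(\text{rate})$. This follows from a ground-state-substitution bound: in the bulk it is $\gtrsim K$, by $|\ln(\widetilde\mu/\widetilde\lambda)|$ bounded away from $0$ and (A3) at infinity. In the intermediate range near $0$ it is $\gtrsim\gamma_0$, using the monotonicity in (A2). The boundary and $x_1$ pieces are bounded below by $(\gamma_0-o(1))\|\chi_0u\|^2$ and $(\gamma_1-o(1))\|\chi_1u\|^2$. The $x_2$ piece is bounded below by
\[
(\gamma_2-o(1))\,\|\chi_2u\|^2-\gamma_2\,\langle\chi_2u,g_2\rangle^2 .
\]

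\emph{Main obstacle.} The hard step is to show $\langle\chi_2u,g_2\rangle\to0$ uniformly over unit vectors $u\perp\phi_K$. I would prove
\[
\|\chi_2\phi_K-g_2\|\to0 \quad\text{(up to sign)}.
\]
Apply the same IMS bound to $\phi_K$ itself. Then $\rho_1\to0$ forces $\|(1-\chi_2^2)^{1/2}\phi_K\|\to0$, and forces $\chi_2\phi_K$ to lie asymptotically in the kernel of the local oscillator, which is spanned by $g_2$; the gap $\gamma_2$ controls this quantitatively. It then follows that
\[
\langle\chi_2u,g_2\rangle=\langle u,\chi_2g_2-\phi_K\rangle+o(1)=o(1).
\]
Hence $\rho_2\ge\gamma_*-o(1)$. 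Letting $K\to\infty$ and then $R\to\infty$ completes the argument.
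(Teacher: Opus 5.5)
Your proposal is correct and follows essentially the paper's route: Jacobi form plus discrete IMS localization, the uniform bound $|\langle\chi_2u,g_2\rangle|\le\|\chi_2g_2-\phi_K\|$ for $u\perp\phi_K$ (obtained by applying IMS to the ground state itself and using the local gap $\gamma_2$), and projected local trial vectors for the upper bound. The differences are in scale and bookkeeping only: you use a microscopic boundary window $n\le R$ and interior windows of width $R\sqrt K$, with $K\to\infty$ and then $R\to\infty$, whereas the paper uses a boundary layer $n\le\delta K$ compared with the frozen form and windows of width $K^{3/4}$, giving an $O_\delta(K^{-1/2})$ localization error; also, in your upper bound the labels $u_0$ and $u_2$ are swapped, since the boundary and threshold trial vectors are handled by concentration of $\phi_K$ near $Kx_2$, while the first excited state at $x_2$ is the one that needs $u\perp g_2$ together with $\|\phi_K-g_2\|\to0$.
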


The three entries in the minimum have distinct spectral origins:
$\gamma_0$ is the lowest spectral value of the frozen absorbing
boundary model, $\gamma_1$ is the lowest eigenvalue of the shifted
oscillator at the unstable threshold, and $\gamma_2$ is the first
excited eigenvalue of the oscillator at the stable equilibrium.
In particular, the positive instability rate $V'(x_1)$ can determine
the spectral gap of the conditioned process.

Thus the limiting gap depends on the magnitudes of the drift
derivatives at all three equilibria, although the Jacobi ground state
concentrates near $x_2$. No ordering of these three rates is assumed.
\resultref{Proposition~}{prop:realization-three-mechanisms} shows that,
for any fixed $0<x_1<x_2$, each rate can be the unique minimum within
the admissible class. Consequently, none of the three terms can be
omitted from the formula in general.

For the corresponding conditioned process, let
$\varphi_{1,K}$ denote a positive principal eigenfunction of $-L_K$,
and write
\[
Z_K:=\|\varphi_{1,K}\|_{\ell^2(\pi^{(K)})}^2
=\sum_{n\ge1}\pi_n^{(K)}\varphi_{1,K}(n)^2.
\]
The generator of the associated $Q$-process is
\[
    \mathcal L_K^Qf
    =
    \frac{1}{\varphi_{1,K}}
    L_K(\varphi_{1,K}f)
    +
    \rho_1(K)f,
\]
and its invariant probability measure is
\[
    m_n^{(K)}
    =
    \frac{
        \pi_n^{(K)}\varphi_{1,K}(n)^2
    }{Z_K}.
\]
Let $(P_t^{Q,K})_{t\geq0}$ denote the semigroup of the
$Q$-process.

\begin{maincor}[$Q$-process relaxation]
\label{cor:q-process-gap}
Under {\rm(A1)}--{\rm(A3)} and {\rm(H)}, the
$L^2(m^{(K)})$-spectral gap of the $Q$-process equals
\[
    \rho_2(K)-\rho_1(K).
\]
Consequently,
\[
    \operatorname{gap}(-\mathcal L_K^Q)\longrightarrow\gamma_*,
\]
and the optimal Poincar\'e constant converges to $1/\gamma_*$.
\end{maincor}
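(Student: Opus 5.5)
The plan is to show that the Doob transform by $\varphi_{1,K}$ is a unitary equivalence between $-L_K-\rho_1(K)$ on $\ell^2(\pi^{(K)})$ and $-\mathcal L_K^Q$ on $\ell^2(m^{(K)})$. The gap statement then follows directly from Theorem~\ref{thm:spectral-gap-limit}.

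First I define $U:\ell^2(m^{(K)})\to\ell^2(\pi^{(K)})$ by $Uf=Z_K^{-1/2}\varphi_{1,K}f$. Since $\varphi_{1,K}>0$ on $\mathbb N^*$, one checks that $\|Uf\|_{\ell^2(\pi^{(K)})}^2=\sum_n \pi_n^{(K)}\varphi_{1,K}(n)^2f_n^2/Z_K=\|f\|_{\ell^2(m^{(K)})}^2$. Hence $U$ is isometric, and it is onto because $U^{-1}g=Z_K^{1/2}g/\varphi_{1,K}$. By the definition of $\mathcal L_K^Q$ we have $-\mathcal L_K^Q=U^{-1}(-L_K-\rho_1(K))U$. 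I take this identity as the definition of the self-adjoint realization of $-\mathcal L_K^Q$ on $\ell^2(m^{(K)})$, i.e.\ with domain $U^{-1}\mathcal D(L_K)$. Pointwise on finitely supported functions it agrees with the formal generator $\varphi_{1,K}^{-1}L_K(\varphi_{1,K}\,\cdot)+\rho_1(K)$. To justify this as the $L^2$ generator of $P_t^{Q,K}$, I use that the semigroup is $P_t^{Q,K}f=e^{\rho_1(K)t}\varphi_{1,K}^{-1}e^{tL_K}(\varphi_{1,K}f)$, as in the Champagnat--Villemonais Doob-transform description. This intertwines the semigroups, so $P_t^{Q,K}=U^{-1}e^{-t(-L_K-\rho_1(K))}U$.

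Second, by unitary equivalence and the compact resolvent of $L_K$ (Subsection~\ref{subsec:self-adjoint-realization}), the spectrum of $-\mathcal L_K^Q$ is $\{\rho_j(K)-\rho_1(K)\}_{j\ge1}$, with simple eigenvalues as in \eqref{eq:killed-spectrum}. The eigenvalue $0$ has eigenfunction $U^{-1}\varphi_{1,K}\propto 1$, the constants. The $L^2(m^{(K)})$ spectral gap is the infimum of the Dirichlet form over mean-zero $f$, i.e.\ over $f\perp 1$ in $\ell^2(m^{(K)})$, which corresponds to $Uf\perp\varphi_{1,K}$. By the min--max principle this infimum equals $\rho_2(K)-\rho_1(K)$. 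Equivalently, $\|P_t^{Q,K}f-m^{(K)}(f)\|\le e^{-(\rho_2-\rho_1)t}\|f-m^{(K)}(f)\|$ holds with an optimal rate, attained at $f=U^{-1}\varphi_{2,K}$.

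Third, the optimal Poincar\'e constant is by definition the smallest $C$ with $\operatorname{Var}_{m^{(K)}}(f)\le C\,\mathcal E^Q(f,f)$, which is the reciprocal of the gap. So it equals $(\rho_2(K)-\rho_1(K))^{-1}$, and this converges to $1/\gamma_*$ by Theorem~\ref{thm:spectral-gap-limit}, using $\gamma_*>0$. The only delicate point is the domain issue in the first step: making sure the Dirichlet form of the $Q$-process is the closure on finitely supported functions. I would handle it by noting that $U$ maps finitely supported functions onto finitely supported functions. Since finitely supported functions form a core for the form of $-L_K$ (the realization constructed in Subsection~\ref{subsec:self-adjoint-realization} via \eqref{eq:reversible-weight-decay}), the transported forms coincide.
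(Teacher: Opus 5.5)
Your proposal is correct and follows essentially the same route as the paper: your map $U$ is the paper's $\mathcal T_K$, which conjugates $-\mathcal L_K^Q$ to $-L_K-\rho_1(K)$ with domain $U^{-1}D(L_K)$, so the spectrum becomes $\{\rho_j(K)-\rho_1(K)\}_{j\ge1}$ with the constants spanning the kernel, and the spectral theorem together with Theorem~\ref{thm:spectral-gap-limit} gives the gap, the optimal Poincar\'e constant $(\rho_2(K)-\rho_1(K))^{-1}\to1/\gamma_*$, and the semigroup decay. The paper additionally checks the Champagnat--Villemonais birth--death criterion (their Theorem~4.1) to justify the Doob semigroup formula you quote; also, finitely supported sequences are a form core simply because the form is defined as a closure from $c_{00}$, not because of \eqref{eq:reversible-weight-decay}.
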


The spectral shift under the Doob transform gives the gap identity;
\resultref{Theorem~}{thm:spectral-gap-limit} supplies its explicit
large-population limit. Subsection~\ref{subsec:q-process-consequences}
gives the proof and the associated Poincar\'e inequality and semigroup
decay estimate. Combining the gap limit with the sharp
principal-eigenvalue asymptotics from \cite[Theorem~A]{Hou-Yan-Zhou2026}
also determines the two distinct time scales below.

\begin{maincor}[Separation of extinction and relaxation scales]
\label{cor:two-scale-spectrum}
Under {\rm(A1)}--{\rm(A3)} and {\rm(H)},
\begin{equation}
\label{eq:principal-eigenvalue-asymptotics}
    \rho_1(K)
    =
    A e^{-KH(x_1)}
    \left(
        1+
        O\left(
            \frac{(\ln K)^3}{\sqrt K}
        \right)
    \right),
\end{equation}
where
\[
    A
    =
    \frac{
        x_2\widetilde\lambda(x_2)
        \sqrt{-H''(x_1)H''(x_2)}
    }{2\pi},
\]
whereas
\[
    \rho_2(K)=\gamma_*+o(1).
\]
In particular,
\[
    \frac{\rho_2(K)-\rho_1(K)}{\rho_1(K)}
    \longrightarrow\infty.
\]
Accordingly, the inverse $L^2$-spectral gap of the $Q$-process
converges to $1/\gamma_*$, while the quasi-stationary mean extinction time behaves asymptotically as
$A^{-1}e^{KH(x_1)}$.
\end{maincor}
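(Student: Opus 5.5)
The corollary combines two ingredients that are already available, plus one elementary limit.

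\emph{Step 1: the principal eigenvalue.} The asymptotic formula \eqref{eq:principal-eigenvalue-asymptotics} is \cite[Theorem~A]{Hou-Yan-Zhou2026}. Its hypotheses are exactly {\rm(A1)}--{\rm(A3)} and {\rm(H)}, so no new argument is needed. I only need to check that their principal eigenvalue coincides with $\rho_1(K)$. Both are the bottom of the spectrum of $-L_K$ with Dirichlet condition $f_0=0$, realized self-adjointly on $\ell^2(\pi^{(K)})$. The realization is unique because, by \eqref{eq:reversible-weight-decay} and the compactness argument of Subsection~\ref{subsec:self-adjoint-realization}, there is no freedom of boundary condition at infinity. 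I also need to match their constant with $A=x_2\widetilde\lambda(x_2)\sqrt{-H''(x_1)H''(x_2)}/(2\pi)$. This is the Eyring--Kramers form, with prefactor $\gamma_2\sqrt{|H''(x_1)|/H''(x_2)}/(2\pi)$ rewritten using \eqref{eq:local-rates}.

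\emph{Step 2: the second eigenvalue.} Since $H$ has a strict local maximum at $x_1$ and $H(x_2)=0$, and since $H$ is strictly decreasing on $(x_1,x_2)$ by (A2), we have $H(x_1)>0$. Step~1 then gives $\rho_1(K)\to0$ exponentially fast; \eqref{eq:rho1-vanishes} already gives $\rho_1(K)\to0$. Writing $\rho_2(K)=(\rho_2(K)-\rho_1(K))+\rho_1(K)$ and applying \resultref{Theorem~}{thm:spectral-gap-limit} yields $\rho_2(K)=\gamma_*+o(1)$.

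\emph{Step 3: the ratio and the time scales.} The numerator $\rho_2(K)-\rho_1(K)$ tends to $\gamma_*>0$, and the denominator satisfies $\rho_1(K)\sim Ae^{-KH(x_1)}\to0$. Hence the ratio diverges, in fact like $\gamma_*A^{-1}e^{KH(x_1)}$. For the time scales I use \resultref{Corollary~}{cor:q-process-gap}: the inverse gap of the $Q$-process is $(\rho_2-\rho_1)^{-1}\to1/\gamma_*$. Under the quasi-stationary distribution the extinction time is exponential with mean $\rho_1(K)^{-1}$ \cite[Proposition~2]{Meleard-Villemonais2012}. By \eqref{eq:principal-eigenvalue-asymptotics}, $\rho_1(K)^{-1}=A^{-1}e^{KH(x_1)}(1+O((\ln K)^3/\sqrt K))$.

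\emph{Main obstacle.} The only delicate point is the identification in Step~1: confirming that the operator and normalization in \cite{Hou-Yan-Zhou2026} are the same as here, including the Dirichlet condition at $0$, the weights $\pi^{(K)}$, and the constant $A$. Everything else is arithmetic on top of \resultref{Theorem~}{thm:spectral-gap-limit}.
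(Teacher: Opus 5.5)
Your route is the paper's, and Steps~2 and~3 are correct: $H(x_1)>0$, the rewriting of $A$ via \eqref{eq:local-rates}, the ratio limit, and the use of Corollary~\ref{cor:q-process-gap}.

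\textbf{Gap: the mean extinction time.} The step your citation does not deliver is $\mathbb E_{\nu^{(K)}}\tau_0=\rho_1(K)^{-1}$. \cite[Proposition~2]{Meleard-Villemonais2012} only says that, under a quasi-stationary law $\nu$, $\mathbb P_\nu(\tau_0>t)=e^{-\theta(\nu)t}$ for \emph{some} $\theta(\nu)>0$. For birth--death chains, distinct quasi-stationary laws can have distinct rates, so the value $\theta=\rho_1(K)$ needs an argument.

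The paper supplies it in two steps. First, it identifies the quasi-stationary law of \cite[Theorem~B]{Hou-Yan-Zhou2026} as $\nu_n^{(K)}\propto\pi_n^{(K)}\varphi_{1,K}(n)$. The normalizer is finite by Cauchy--Schwarz, because $\pi_{n+1}^{(K)}/\pi_n^{(K)}\to0$ gives $\sum_n\pi_n^{(K)}<\infty$. Second, it uses the symmetry of $e^{tL_K}$ on $\ell^2(\pi^{(K)})$, which is the killed transition semigroup by Subsection~\ref{subsec:self-adjoint-realization}:
\[
\mathbb P_{\nu^{(K)}}\bigl(X_t^K\in\mathsf A,\ \tau_0>t\bigr)
=\frac{\langle\varphi_{1,K},e^{tL_K}\mathbf 1_{\mathsf A}\rangle_{\pi^{(K)}}}{\langle\varphi_{1,K},\mathbf 1\rangle_{\pi^{(K)}}}
=e^{-\rho_1(K)t}\,\nu^{(K)}(\mathsf A).
\]
Taking $\mathsf A=\mathbb N^*$ gives the exponential law with rate exactly $\rho_1(K)$. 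Only then does \eqref{eq:principal-eigenvalue-asymptotics} give $\mathbb E_{\nu^{(K)}}\tau_0\sim A^{-1}e^{KH(x_1)}$. An equivalent fix is uniqueness of the quasi-stationary law under the entrance-boundary condition, combined with van Doorn's identification of its rate with the decay parameter.

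\textbf{Remark on your Step~1.} Compactness of the resolvent cannot by itself exclude other self-adjoint realizations: in the limit-circle case every self-adjoint extension of a Jacobi matrix has compact resolvent. What does exclude them is \eqref{eq:reversible-weight-decay}. The nonconstant solution $u_n=\sum_{k<n}(\lambda_k\pi_k)^{-1}$ of $L_Ku=0$ satisfies
\[
\pi_nu_n^2\ge\pi_n(\lambda_{n-1}\pi_{n-1})^{-2}=(\mu_n^2\pi_n)^{-1}\to\infty,
\]
so it is not in $\ell^2(\pi^{(K)})$ and the limit-point case holds at infinity. The paper does not discuss this point and simply quotes Theorem~A.
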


\subsection{Relation to earlier results and proof strategy}
\label{subsec:comparison}
\label{subsec:proof-strategy}

Several methods relate the spectral gap of a birth--death chain to
its transition rates. Miclo \cite[Proposition~2]{Miclo1999} uses
weighted discrete Hardy inequalities to obtain upper and lower bounds,
within universal factors, from the reversible tail masses and inverse
edge weights. Chen and Saloff-Coste
\cite{Chen-SaloffCoste2014} develop convergent iterative
and bisection procedures for computing the spectral gap on finite
paths. For more general reversible Markov chains, Bovier et~al.\
\cite{Bovier-Eckhoff-Gayrard-Klein2002} relate small eigenvalues to
inverse mean metastable exit times under suitable separation and
nondegeneracy assumptions. The question here is to determine the
exact positive limit of the gap as the population scale tends to
infinity, when the principal eigenvalue tends to zero.

The closest results in this scaling regime are due to Chazottes
et~al.\ For birth--death models with a repelling origin and a single
positive attracting equilibrium, they obtain a sharp asymptotic
formula for the principal eigenvalue
\cite[Theorem~3.2]{Chazottes-Collet-Meleard2015}. They subsequently
prove that every fixed eigenvalue converges to the ordered union of
the boundary and stable-equilibrium spectra, with repeated levels
counted according to their multiplicities
\cite[Theorem~1.2]{Chazottes-Collet-Meleard2023}.
Their Corollary~1.3 identifies the gap limit as the smaller of the
boundary instability rate and the stable-equilibrium relaxation rate,
and Proposition~1.4 relates this gap to relaxation of the $Q$-process.

In the present bistable setting, the origin is attracting and the
additional unstable equilibrium contributes a third local spectrum.
The relevant levels are the boundary spectral bottom $\gamma_0$,
the lowest threshold level $\gamma_1$, and the first positive stable
oscillator level $\gamma_2$. The boundary spectrum follows from
\cite[Theorem~7.1]{Chazottes-Collet-Meleard2023} after exchanging
the birth and death constants; the details are given in
Appendix~\ref{sec:frozen-boundary-spectrum}. Our main theorem shows
how these three contributions determine the global gap. Moreover,
\resultref{Proposition~}{prop:realization-three-mechanisms} shows
that each can be the unique minimum while the two positive equilibria
remain fixed. Thus no one of the three terms can be omitted for this
class of models.

The proof treats the second variational eigenvalue directly.
Conjugation by the square roots of the reversible weights transforms
$-L_K$ into a self-adjoint Jacobi operator $\mathcal H_K$ with
quadratic form $Q_K$. The variational constraint for $\rho_2(K)$
requires unit vectors to be orthogonal to the normalized global
ground state $v_{1,K}$. To estimate their energy, we use the IMS
partition-of-unity formula for quadratic forms
\cite{Cycon-Froese-Kirsch-Simon1987}; its version for difference
operators is given by Klein and Rosenberger
\cite[Lemma~3.3(a), Eq.~(3.15)]{Klein-Rosenberger2009}.
The partition separates the absorbing boundary, the two interior
equilibria, the intermediate region, and the right tail. Interior
cutoffs of width $K^{3/4}$ contain the fluctuation scale $\sqrt K$
and give a total localization error of $O_\delta(K^{-1/2})$ for
each fixed truncation parameter $\delta$.

At the stable equilibrium, the limiting oscillator has a zero mode,
and multiplication by a cutoff need not preserve orthogonality to
the corresponding local ground state. We first apply IMS to the
global ground state itself and compare it with the stable local
ground state. Estimate~\eqref{eq:x2-uniform-orthogonality} then
controls the local zero-mode component uniformly for all unit vectors
orthogonal to $v_{1,K}$. Applying IMS to these vectors yields the
lower bound with constant $\min\{\gamma_0,\gamma_1,\gamma_2\}$.
Local trial vectors, projected onto $v_{1,K}^{\perp}$, give the
matching upper bounds. This argument requires local compactness and
the first two stable local eigenvalues, without requiring convergence
of the higher global eigenvalues.

We also compare the local boundary model with the formal diffusion
approximation. The latter has drift $V$ and infinitesimal variance
$x(\widetilde\lambda(x)+\widetilde\mu(x))/K$, corresponding to
the coefficient structure in \cite[Eq.~(1.1)]{Yan-Zhou-continuous}.
The Taylor expansion for fixed smooth test functions does not control
profiles on the microscopic boundary scale $n=O(1)$.
Subsection~\ref{subsec:diffusion-comparison} gives an explicit example
of this limitation and shows that the frozen discrete and diffusion
operators nevertheless have the same spectral bottom. A comparison
of the global gap limits would additionally require estimates near
both interior equilibria and in the tail.

Section~\ref{sec:self-adjoint} develops the Jacobi formulation and
the IMS partition. The local approximation and spectral estimates in
Section~\ref{sec:local-estimates} are then combined in
Section~\ref{sec:gap-proof} to prove
\resultref{Theorem~}{thm:spectral-gap-limit}.
Section~\ref{sec:consequences} gives the consequences for the
$Q$-process and the examples, followed by the diffusion comparison
and questions on higher eigenvalues.
Appendix~\ref{sec:frozen-boundary-spectrum} records the frozen
discrete spectrum, and Appendix~\ref{sec:frozen-diffusion-bottom}
proves the corresponding diffusion spectral-bottom identity.

Table~\ref{tab:notation} collects the notation used throughout the paper.
The variational arguments are carried out over real Hilbert spaces.
For a positive weight $w=(w_n)_{n\ge1}$, we write
\[
\ell^2(w):=\left\{u:\sum_{n\ge1}w_nu_n^2<\infty\right\},
\qquad \langle u,v\rangle_w:=\sum_{n\ge1}w_nu_nv_n.
\]
Unsubscripted inner products and $\|\cdot\|_2$ refer to
$\ell^2(\mathbb N^*)$ with counting measure. The notation $L^2$
without a specified measure means $L^2(\mathbb R,dy)$; its inner
product is written $\langle\cdot,\cdot\rangle_{L^2}$.
We use $Q(z,w)$ for the symmetric bilinear form associated with a
quadratic form $Q(z)=Q(z,z)$, and $D(Q)$ for its form domain.
For a probability measure $m$, $m(g)$ denotes its integral of $g$.
When $K$ is clear from the context, we write $\lambda_n,\mu_n,\pi_n$
for $\lambda_n^{(K)},\mu_n^{(K)},\pi_n^{(K)}$. The Jacobi
coefficients $a_n,W_n$ introduced below also depend on $K$.
We use $O(\cdot)$ for unspecified upper bounds and remainder estimates.
Unless a different limit is stated, $O(\cdot)$ and $o(\cdot)$ refer
to $K\to\infty$. Implied constants may depend on the fixed rate
functions and cutoff profiles; dependence on additional parameters
is indicated by subscripts or stated locally. For a positive
comparison scale $G_K$, the notation $F_K=O_\delta(G_K)$ means that
$|F_K|/G_K$ remains bounded as $K\to\infty$ for each fixed $\delta$,
whereas $F_K=o_\delta(G_K)$ means that this ratio tends to zero.
Uniformity in lattice indices, vectors, and other variables is
specified in the relevant estimates. Constants retained explicitly
in inequalities are introduced with their parameter dependence
where they are used. A constant chosen for a definition or a
subsequent parameter choice retains that value throughout the
argument. We write $F_K\sim G_K$ when $F_K/G_K\to1$.

In the localization indices, $b$ denotes the boundary, $0$ the
intermediate region, and $r$ the right tail; thus $\chi_{0,K}$ is
distinct from the boundary cutoff $\chi_{b,K}$ associated with
$\gamma_0$. The superscript $(0)$ on frozen-model quantities refers
to evaluation of the per-capita rates at the boundary, not to $K=0$.

\begin{table}[!tbp]
\centering
\caption{Principal notation. The local objects are defined in Sections~\ref{sec:self-adjoint}--\ref{sec:local-estimates}.}
\label{tab:notation}
\small
\renewcommand{\arraystretch}{1.17}
\begin{tabular}{@{}p{0.29\textwidth}p{0.67\textwidth}@{}}
\hline
Symbol & Meaning \\
\hline
$K$; $n$; $x=n/K$ & Population scale, population size, and density coordinate. \\
$X^K$; $\tau_0$ & Birth--death process and its extinction time. \\
$\widetilde\lambda,\widetilde\mu$ & Per-capita birth and death rates; the total rates are $\lambda_n^{(K)}=n\widetilde\lambda(n/K)$ and $\mu_n^{(K)}=n\widetilde\mu(n/K)$. \\
$x_1,x_2$ & Unstable threshold and positive stable equilibrium. \\
$V,H$ & Deterministic drift and logarithmic potential, with $H(x_2)=0$. \\
$\pi^{(K)}$ & Unnormalized reversible weights of the killed chain. \\
$L_K$; $\rho_j(K)$ & Killed generator and the eigenvalues of $-L_K$, indexed from $j=1$. \\
$\gamma_0,\gamma_1,\gamma_2$; $\gamma_*$ & Boundary, threshold, and stable-equilibrium rates in \eqref{eq:local-rates}; $\gamma_*=\min_i\gamma_i$. \\
$\varphi_{1,K}$; $Z_K$ & Positive principal eigenfunction in $\ell^2(\pi^{(K)})$ and its squared norm. \\
$S_K$; $\mathcal H_K$; $Q_K$ & Jacobi transform, transformed operator, and its closed quadratic form. \\
$a_n$; $W_n$ & Jacobi edge coefficient and potential; their dependence on $K$ is suppressed. \\
$v_{1,K}$ & Normalized positive ground state $Z_K^{-1/2}S_K\varphi_{1,K}$ of $\mathcal H_K$. \\
$\nu^{(K)}$; $m^{(K)}$ & Quasi-stationary law and invariant law of the $Q$-process; their weights are proportional to $\pi_n^{(K)}\varphi_{1,K}(n)$ and $\pi_n^{(K)}\varphi_{1,K}(n)^2$, respectively. \\
$\mathcal L_K^Q$; $P_t^{Q,K}$; $C_{\mathrm P}(K)$ & $Q$-process generator, semigroup, and optimal Poincar\'e constant. \\
$\ell_K$; $d_K$; $n_i(K)$ & Cutoff width $K^{3/4}$, rescaled mesh $K^{-1/2}$, and lattice center $\lfloor Kx_i\rfloor$. \\
$I_{\sigma,K}$; $J_{\sigma,K}$; $\chi_{\sigma,K}$ & Core regions, enlarged regions, and quadratic partition of unity, with $\sigma\in\{b,0,1,2,r\}$. \\
$E_{i,K}$; $Q_{i,K}$; $\lambda_{j,K}^{(i)}$ & Local subspaces, restricted forms, and local eigenvalues, indexed from $j=0$; $i=1,2$. \\
$D_i$; $b_i$; $q_i$ & Coefficient $x_i\widetilde\lambda(x_i)$, drift derivative $V'(x_i)$, and limiting oscillator form. \\
$\alpha_0,\beta_0$; $\mathcal H_b^{(0)}$ & Rates $\widetilde\lambda(0),\widetilde\mu(0)$ and the frozen boundary Jacobi operator. \\
\hline
\end{tabular}
\end{table}

\section{Jacobi formulation and IMS localization}
\label{sec:self-adjoint}
The proof of \resultref{Theorem~}{thm:spectral-gap-limit} is based on
three variational bounds for the Jacobi operator associated with the
killed generator. We first construct this operator and then derive
the discrete IMS identity used to estimate its quadratic form.

\subsection{Self-adjoint formulation and variational reduction}
\label{subsec:self-adjoint-realization}
Throughout this subsection, $K$ is fixed.

To prove \eqref{eq:reversible-weight-decay}, set
\[
B:=\sup_{x\ge0}\frac{\widetilde\mu'(x)}{\widetilde\mu(x)}<\infty,
\qquad d_n:=\mu_n^2\pi_n.
\]
The logarithmic derivative bound in {\rm(A3)} gives
\[
\frac{\mu_{n+1}}{\mu_n}
=\left(1+\frac1n\right)
 \exp\left(\int_{n/K}^{(n+1)/K}
       \frac{\widetilde\mu'(s)}{\widetilde\mu(s)}\,ds\right)
\le\left(1+\frac1n\right)e^{B/K}.
\]
By detailed balance and the first limit in {\rm(A3)},
\[
\frac{d_{n+1}}{d_n}
=\frac{\mu_{n+1}}{\mu_n}\frac{\lambda_n}{\mu_n}
\le\left(1+\frac1n\right)e^{B/K}
 \frac{\widetilde\lambda(n/K)}{\widetilde\mu(n/K)}
\longrightarrow0
\qquad(n\to\infty).
\]
Thus $d_{n+1}\le d_n/2$ for all sufficiently large $n$, and
$d_n\to0$, proving \eqref{eq:reversible-weight-decay}.

For fixed $K$, assumption {\rm(A3)} gives $\mu_n/\lambda_n\to\infty$.
Consequently,
\[
R_n:=\prod_{j=1}^n\frac{\mu_j}{\lambda_j}\longrightarrow\infty,
\qquad \sum_{n\ge1}R_n=\infty.
\]
The series in the non-explosion criterion of
\cite[Theorem~10]{Meleard-Villemonais2012} dominates
$\sum_nR_n$, so the chain is non-explosive. The same divergence is
the absorption criterion in \cite[Proposition~12]{Meleard-Villemonais2012}.
Hence $\mathbb P_n(\tau_0<\infty)=1$ for every $n\ge1$.

On finitely supported sequences define the Dirichlet form
\[
\mathcal E_K(u,u)=\mu_1\pi_1|u_1|^2+
\sum_{n\ge1}\lambda_n\pi_n|u_{n+1}-u_n|^2.
\]
This densely defined nonnegative form is closable on $\ell^2(\pi)$.
Indeed, convergence in $\ell^2(\pi)$ implies coordinatewise convergence.
For an $\mathcal E_K$-Cauchy sequence, the weighted edge differences
converge in $\ell^2$, and the boundary value
at $n=1$ converges. If its $\ell^2(\pi)$ limit is zero, all these
limits vanish, which proves closability. We use $\mathcal E_K$ also
for its closure. Convergence of the weighted edge differences extends
the series representation to the full form domain.

The representation theorem \cite[Theorem~2.14]{Teschl2014} associates a nonnegative self-adjoint operator with this closed form. We identify its semigroup with that of the process killed at $0$.
Restricting the form to sequences supported on $\{1,\ldots,N\}$
gives the generator killed upon hitting either $0$ or $N+1$.
Non-explosion guarantees that these finite-dimensional transition kernels
increase pointwise to those of the infinite-state killed chain.
For $f\in\ell^2(\pi)$ and $\alpha>0$, the resolvent associated with
$\mathcal E_K$ is the unique $u\in D(\mathcal E_K)$ satisfying
\[
\mathcal E_K(u,v)+\alpha\langle u,v\rangle_\pi
=\langle f,v\rangle_\pi,\qquad v\in D(\mathcal E_K).
\]
The finite-dimensional variational solutions converge in form norm,
since finitely supported sequences form a core. Taking Laplace
transforms of the increasing transition kernels identifies the limiting
resolvent with that of the killed process. We denote this nonnegative
self-adjoint operator by $-L_K$.

The unitary transformation and the associated Jacobi operator are
\[
S_K:\ell^2(\pi^{(K)})\longrightarrow\ell^2(\mathbb N^*),
\qquad (S_Ku)_n=\sqrt{\pi_n}\,u_n,
\qquad\mathcal H_K=S_K(-L_K)S_K^{-1}.
\]
To apply the variational principle, we express the transformed quadratic form in terms of nearest-neighbor differences. Let $c_{00}(\mathbb N^*)$ denote the space of finitely supported sequences. The closed quadratic form of $\mathcal H_K$ is $Q_K(z):=\mathcal E_K(S_K^{-1}z,S_K^{-1}z)$. On $c_{00}(\mathbb N^*)$, detailed balance gives
\[
Q_K(z)=\sum_{n\ge1}a_n(z_{n+1}-z_n)^2+\sum_{n\ge1}W_nz_n^2,
\]
where
\[
a_n=\sqrt{\lambda_n\mu_{n+1}},\qquad
W_n=\lambda_n+\mu_n-a_n-a_{n-1},\qquad a_0=0.
\]
The form domain $D(Q_K)$ is the completion of $c_{00}$ under the norm $(Q_K(z)+\|z\|_2^2)^{1/2}$. The estimates below justify extending this series representation to the full domain and show that multiplication by our later localization cutoffs preserves membership in $D(Q_K)$. These form-domain estimates are for fixed $K$; no uniformity in $K$ is required here.

By the definition of $B$, assumption {\rm(A3)}, and monotonicity,
\[
\frac{\mu_{n+1}}{\mu_n}\le(1+1/n)e^{B/K},\qquad
\frac{a_n}{\mu_n}
=\left(\frac{\lambda_n}{\mu_n}\frac{\mu_{n+1}}{\mu_n}\right)^{1/2}
\longrightarrow0.
\]
Similarly,
\[
\frac{a_{n-1}}{\mu_n}
=\left(\frac{\lambda_{n-1}}{\mu_n}\right)^{1/2}
\le\left(\frac{\lambda_{n-1}}{\mu_{n-1}}\right)^{1/2}
\longrightarrow0.
\]
Hence $W_n/\mu_n\to1$ as $n\to\infty$, so $W_n$ is negative at only finitely many indices. Fix $c_K\ge1$ such that $W_n+c_K\ge1$ for all $n$, and define the shifted form on $c_{00}$ by
\[
Q_K^+(z):=Q_K(z)+c_K\|z\|_2^2
=\sum_{n\ge1}a_n|z_{n+1}-z_n|^2
 +\sum_{n\ge1}(W_n+c_K)|z_n|^2.
\]
Since $Q_K\ge0$, $(Q_K^+)^{1/2}$ defines an equivalent form norm. If a sequence from $c_{00}$ is Cauchy in this norm, its edge differences and coordinate values are Cauchy in the respective weighted $\ell^2$ spaces. Coordinatewise convergence identifies their limits with the differences and values of the $\ell^2$ limit. Thus the series representation of $Q_K^+$, and hence of $Q_K$, extends to every element of $D(Q_K)$.

These bounds also imply compactness of the resolvent. Since $\mu_n\ge n\widetilde\mu(0)$ and $W_n/\mu_n\to1$, there exists an integer $N_K$ such that
\[
W_n\ge\frac{\mu_n}{2}\ge\frac{n\widetilde\mu(0)}2,
\qquad n>N_K.
\]
For $N\ge N_K$ and $z\in D(Q_K)$,
\[
\sum_{n>N}|z_n|^2
\le\frac{2}{\widetilde\mu(0)(N+1)}
\bigl(Q_K(z)+c_K\|z\|_2^2\bigr).
\]
Thus any set bounded in form norm has uniformly small $\ell^2$ tails. Projections onto the first $N$ coordinates give relatively compact sets, so $D(Q_K)$ embeds compactly into $\ell^2$. It follows that $\mathcal H_K$, and equivalently $-L_K$, possess compact resolvents.

The operator has trivial kernel: zero energy would force the boundary value and all successive differences in the Dirichlet form to vanish, hence $\rho_1(K)>0$. From the inequality $\mathcal E_K(|u|,|u|)\le\mathcal E_K(u,u)$ we may take the principal eigenfunction nonnegative; positivity of transition rates in the recurrence then forces strict positivity. Given the Jacobi recurrence, an eigenvector is fully determined by its first entry, so a vanishing first component implies the whole vector vanishes. All eigenvalues are therefore simple, recovering the ordering in \eqref{eq:killed-spectrum}.

Define the normalized positive principal eigenvector
\[
v_{1,K}:=Z_K^{-1/2}S_K\varphi_{1,K}.
\]
It satisfies
\[
\mathcal H_Kv_{1,K}=\rho_1(K)v_{1,K},\qquad
\|v_{1,K}\|_2=1,\qquad m_n^{(K)}=v_{1,K}(n)^2.
\]

We will also need stability of the form domain under multiplication by cutoffs. Let $\chi$ be a bounded sequence whose differences are nonzero only on a finite set. For finitely supported $z$, we use the identity
\[
\chi_{n+1}z_{n+1}-\chi_nz_n
=\chi_{n+1}(z_{n+1}-z_n)+(\chi_{n+1}-\chi_n)z_n.
\]
It follows that
\[
\begin{aligned}
Q_K^+(\chi z)
&\le2\|\chi\|_\infty^2Q_K^+(z)
+2\sum_n a_n|\chi_{n+1}-\chi_n|^2|z_n|^2\\
&\le2\left(\|\chi\|_\infty^2
+\sup_{n\ge1}a_n|\chi_{n+1}-\chi_n|^2\right)Q_K^+(z).
\end{aligned}
\]
The coefficient in the last line is finite because $\chi$ changes across only finitely many edges; the second inequality uses $Q_K^+(z)\ge\|z\|_2^2$. Approximation by finitely supported vectors in form norm then gives $\chi z\in D(Q_K)$ for every $z\in D(Q_K)$. In particular, the form domain is preserved by the localization cutoffs used below and by indicators of half-lines starting at a fixed integer. The IMS error estimate in Subsection~\ref{subsec:localization-partition} will be uniform in $K$.

By the Courant--Fischer variational principle in its quadratic-form version
\cite[Theorem~4.12]{Teschl2014},
\[
\rho_2(K)=\inf_{\substack{z\in D(Q_K),\ \|z\|_2=1\\z\perp v_{1,K}}}Q_K(z).
\]
A local Gaussian trial state from \resultref{Lemma~}{lem:x2-window-kernel-gap}(i) shows $\rho_1(K)\to0$. It therefore remains to prove $\rho_2(K)\to\gamma_*$. The following three propositions give the required lower and upper bounds.

\begin{proposition}\label{prop:spectral-gap-liminf-lower}
For every $0<\varepsilon<\gamma_*$, there exists $K_0$ such that
\[
Q_K(z)\ge(\gamma_*-\varepsilon)\|z\|_2^2,
\qquad z\in D(Q_K),\quad z\perp v_{1,K},\quad K\ge K_0.
\]
Consequently,
$\liminf_{K\to\infty}(\rho_2(K)-\rho_1(K))\ge\gamma_*$.
\end{proposition}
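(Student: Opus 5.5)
The plan is to use a discrete IMS localization with a quadratic partition of unity $\sum_\sigma\chi_{\sigma,K}^2=1$, $\sigma\in\{b,0,1,2,r\}$. The boundary cutoff $\chi_{b,K}$ is supported on $n\le\delta K$. The cutoffs $\chi_{1,K},\chi_{2,K}$ are centered at $n_i(K)$ with transition width $\ell_K=K^{3/4}$. The cutoff $\chi_{0,K}$ covers the intermediate region away from $0,x_1,x_2$, and $\chi_{r,K}$ covers $n\ge Kx_2+\delta K$ up to infinity.

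For $z\in D(Q_K)$ the IMS identity reads
\[
Q_K(z)=\sum_\sigma Q_K(\chi_{\sigma,K}z)-\sum_n a_n\sum_\sigma(\chi_{\sigma,K}(n+1)-\chi_{\sigma,K}(n))^2 z_nz_{n+1}.
\]
Since $a_n=O(n)$ and $|\Delta\chi|=O(\ell_K^{-1})$ on the interior transitions and $O((\delta K)^{-1})$ at the macroscopic ones, the error is $O_\delta(K\cdot K^{-3/2})\|z\|^2=O_\delta(K^{-1/2})\|z\|^2$. Cutoffs preserve the form domain by the stability estimate above.

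Next I bound each local piece from below.

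\textbf{Intermediate region.} On $\mathrm{supp}\,\chi_{0,K}$ the ground-state transform gives $W_n=(\sqrt{\lambda_n}-\sqrt{\mu_n})^2+O(1)$, which is of order $K$ there because $\widetilde\lambda\ne\widetilde\mu$ away from $0,x_1,x_2$. Hence $Q_K(\chi_0z)\ge c_\delta K\|\chi_0z\|^2$.

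\textbf{Right tail.} The same argument, together with $W_n/\mu_n\to1$ from (A3), gives a bound of order $K$ uniformly for large $n$.

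\textbf{Boundary.} Freezing the rates at $\widetilde\lambda(0),\widetilde\mu(0)$ costs $O(\delta)$ relative to $n$. The spectral bottom of the frozen operator $\mathcal H_b^{(0)}$ is $\gamma_0$ (Appendix), so $Q_K(\chi_bz)\ge(\gamma_0-C\delta)\|\chi_bz\|^2$.

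\textbf{Unstable threshold.} Rescaling $y=(n-n_1)/\sqrt K$, the local form converges to the oscillator $q_1$, whose lowest eigenvalue is $\gamma_1=V'(x_1)$; this is the shifted oscillator at $x_1$. This yields $Q_K(\chi_1z)\ge(\gamma_1-o(1))\|\chi_1z\|^2$. The lower bound should follow from a Taylor expansion of $W_n$ to second order and a comparison with the continuum operator (Lemma-level local estimates).

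\textbf{Stable equilibrium.} The local form has eigenvalues $\lambda^{(2)}_{0,K}\to0$ and $\lambda^{(2)}_{1,K}\to\gamma_2$, with normalized local ground state $g_K\in E_{2,K}$. Therefore
\[
Q_K(\chi_2z)\ge(\gamma_2-o(1))\|\chi_2z\|^2-\gamma_2\langle\chi_2z,g_K\rangle^2.
\]

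The main obstacle is controlling the zero-mode term $\langle\chi_2z,g_K\rangle$ for $z\perp v_{1,K}$. My first attempt is to apply the IMS decomposition to $v_{1,K}$ itself. Since $Q_K(v_{1,K})=\rho_1(K)\to0$, every piece $\chi_\sigma v_{1,K}$ with $\sigma\ne2$ has norm $o(1)$: their local lower bounds are all bounded below by a positive constant. The remaining piece satisfies $Q_K(\chi_2v_{1,K})=o(1)$, so $\chi_2v_{1,K}$ is within $o(1)$ of $\pm g_K$ by the local spectral gap. Taking positivity into account, this gives $\|\chi_2v_{1,K}-g_K\|=o(1)$ and $\|v_{1,K}-g_K\|=o(1)$.

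Then for a unit vector $z\perp v_{1,K}$,
\[
\langle\chi_2z,g_K\rangle=\langle z,\chi_2g_K\rangle=\langle z,\chi_2g_K-v_{1,K}\rangle.
\]
Since $\chi_2\equiv1$ where $g_K$ lives up to Gaussian tails, this quantity is $o(1)$ uniformly in $z$; this is the uniform estimate (eq:x2-uniform-orthogonality).

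Summing the local bounds and using $\sum_\sigma\|\chi_\sigma z\|^2=\|z\|^2$ gives
\[
Q_K(z)\ge\bigl(\min\{\gamma_0-C\delta,\gamma_1,\gamma_2\}-o_\delta(1)\bigr)\|z\|^2.
\]
Choosing $\delta$ small and then $K_0$ large yields $\gamma_*-\varepsilon$. The consequence for $\liminf(\rho_2-\rho_1)$ then follows from the variational characterization of $\rho_2(K)$ and $\rho_1(K)\to0$.
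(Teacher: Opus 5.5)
Your proposal is correct and follows the paper's proof essentially step by step. It uses the same five-region discrete IMS localization with $O_\delta(K^{-1/2})$ error, the frozen-boundary and oscillator lower bounds on the local pieces, and the same control of the stable zero mode: IMS applied to $v_{1,K}$, a positivity comparison of $\chi_{2,K}v_{1,K}$ with the local ground state, and the identity $\langle\chi_{2,K}z,g_{2,K}\rangle=\langle z,\chi_{2,K}g_{2,K}-v_{1,K}\rangle$ uniformly over unit vectors in $v_{1,K}^{\perp}$.

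A few details need tightening, none of which affects the conclusion:
\begin{itemize}
\item On the transition layers of $\chi_{0,K}$ next to the interior windows, $W_n$ is only of order $K^{1/2}$, not $K$; this still suffices because the bound only has to exceed $\gamma_*$.
\item Your potential-based bound on the right tail $n\ge K(x_2+\delta)$ needs a version of $W_n\gtrsim\mu_n$ that is uniform in $K$. This follows from the monotonicity of $\widetilde\lambda$ and the bound on $\widetilde\mu'/\widetilde\mu$. The paper instead uses a Hardy-type estimate beyond $K/\delta$.
\item The boundary comparison requires that the frozen form controls $\sum_n n z_n^2$, which should be stated.
\end{itemize}
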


\begin{proposition}\label{prop:left-boundary-limsup-gap}
The boundary rate gives the upper bound
\[
\limsup_{K\to\infty}(\rho_2(K)-\rho_1(K))\le\gamma_0.
\]
\end{proposition}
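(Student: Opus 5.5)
We will prove the upper bound by exhibiting, for every $\varepsilon>0$ and all large $K$, a unit vector $z_K\in D(Q_K)$ with $z_K\perp v_{1,K}$ and $Q_K(z_K)\le\gamma_0+\varepsilon+o(1)$. The Courant--Fischer characterization of $\rho_2(K)$ then gives $\limsup\rho_2(K)\le\gamma_0+\varepsilon$. Since $\rho_1(K)\to0$ by \eqref{eq:rho1-vanishes}, this yields the claim.

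\textbf{Step 1: frozen trial vector.} Set $\alpha_0=\widetilde\lambda(0)$ and $\beta_0=\widetilde\mu(0)$, and consider the frozen boundary Jacobi operator $\mathcal H_b^{(0)}$, whose rates are $n\alpha_0$ and $n\beta_0$. Appendix~\ref{sec:frozen-boundary-spectrum}, via \cite[Theorem~7.1]{Chazottes-Collet-Meleard2023}, gives its spectral bottom as $\gamma_0=\beta_0-\alpha_0$. Hence there is a finitely supported $w$, with support in $\{1,\dots,N\}$ for some $N=N(\varepsilon)$ and $\|w\|_2=1$, whose frozen form satisfies $Q_b^{(0)}(w)\le\gamma_0+\varepsilon$. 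Such a $w$ exists because finitely supported vectors are a form core.

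\textbf{Step 2: comparison with $Q_K$.} Because $N$ is fixed, the coefficients converge entrywise: $a_n^{(K)}\to\sqrt{\alpha_0\beta_0\,n(n+1)}$ and $W_n^{(K)}\to W_n^{(0)}$ for $n\le N$. This follows from $\widetilde\lambda(n/K)\to\alpha_0$ and $\widetilde\mu(n/K)\to\beta_0$ by continuity (A1). Since $Q_K(w)$ is a finite sum over $n\le N$, we get $Q_K(w)\to Q_b^{(0)}(w)$.

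\textbf{Step 3: projection onto $v_{1,K}^\perp$.} Define $z_K=w-\langle w,v_{1,K}\rangle v_{1,K}$ and $c_K=\langle w,v_{1,K}\rangle$. Because $v_{1,K}$ is an eigenvector with eigenvalue $\rho_1(K)$,
\[
Q_K(z_K)=Q_K(w)-2c_K\rho_1(K)c_K+c_K^2\rho_1(K)=Q_K(w)-c_K^2\rho_1(K)\le Q_K(w),
\]
and $\|z_K\|_2^2=1-c_K^2$. It therefore suffices to show $c_K\to0$. That is, the global ground state must carry vanishing $\ell^2$ mass on $\{1,\dots,N\}$:
\[
|c_K|\le\Bigl(\sum_{n\le N}v_{1,K}(n)^2\Bigr)^{1/2}.
\]

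\textbf{Step 4: vanishing boundary mass (the main obstacle).} One option is to invoke the ground-state comparison behind \eqref{eq:x2-uniform-orthogonality}: $v_{1,K}$ is close in $\ell^2$ to the stable local Gaussian ground state supported near $Kx_2$, and that state vanishes on $\{1,\dots,N\}$. To avoid relying on later results, I would instead argue directly. Since $\rho_1(K)\to0$ while $\gamma_0>0$, I would use the eigenvalue equation near the boundary. Apply the IMS identity with the boundary cutoff $\chi_{b,K}$ to $v_{1,K}$:
\[
\rho_1(K)\ge Q_K(\chi_{b,K}v_{1,K})+\dots-\text{(localization error)}.
\]
The boundary localized form is bounded below by approximately $\gamma_0\|\chi_{b,K}v_{1,K}\|^2$ minus $o(1)$ errors, since on the boundary region the per-capita rates are close to their frozen values and the frozen spectral bottom is $\gamma_0$. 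The remaining IMS pieces are nonnegative up to $o(1)$ errors. This forces $\|\chi_{b,K}v_{1,K}\|_2\to0$, and hence $c_K\to0$.

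The delicate part is the lower bound for the boundary localized form. It requires monotonicity of the per-capita rates together with a comparison showing that the frozen operator's bottom stays stable when the region grows with $K$. If that comparison turns out to be hard, I would fall back on the pure form bound on a fixed window. Combining Steps 2--4 gives $Q_K(z_K)/\|z_K\|_2^2\le\gamma_0+\varepsilon+o(1)$, which completes the argument.
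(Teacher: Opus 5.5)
Your proposal is correct. Steps 1--3 match the paper's proof: a finitely supported frozen trial vector, convergence $Q_K(w)\to Q_b^{(0)}(w)$ on its fixed support, and projection via \eqref{eq:trial-variational-bound}.

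The difference is how you show $\langle w,v_{1,K}\rangle\to0$. The paper takes your ``one option''. Once $\operatorname{supp}w$ is disjoint from $J_{2,K}$, it has $w\perp g_{2,K}$, and it bounds $|\langle w,v_{1,K}\rangle|\le\|v_{1,K}-g_{2,K}\|_2\to0$ by \resultref{Lemma~}{lem:x2-window-kernel-gap}(iii).

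Your preferred direct argument also works and is shorter for this proposition. Since $Q_K\ge0$ on all of $D(Q_K)$, the other IMS pieces are exactly nonnegative, not merely up to $o(1)$. So \eqref{eq:discrete-IMS} applied to $v_{1,K}$ gives $\rho_1(K)\ge Q_K(\chi_{b,K}v_{1,K})-C_{\mathrm{IMS},\delta}K^{-1/2}$. Then \resultref{Lemma~}{lem:left-boundary-coercive}, used with $\varepsilon=\gamma_0/2$, yields
$\|\chi_{b,K}v_{1,K}\|_2^2\le 2(\rho_1(K)+C_{\mathrm{IMS},\delta}K^{-1/2})/\gamma_0\to0$.
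Since $\chi_{b,K}\equiv1$ on $\{1,\dots,N\}$ for large $K$, this gives $c_K\to0$.

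The ``delicate part'' you anticipate is already settled by that lemma. It uses only continuity of the rates at $0$ and the weighted bound \eqref{eq:left-boundary-weighted-H1-control}, not monotonicity. You also need only some positive coercivity constant, not $\gamma_0$. Your fixed-window fallback would work too, but only if the window width $L$ is sent to infinity after $K\to\infty$. On a window of fixed width the IMS error is $O(1/L)$, uniformly in $K$, rather than $o(1)$.

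Your argument is exactly the first step of the paper's proof of \resultref{Lemma~}{lem:x2-window-kernel-gap}(iii), restricted to $\sigma=b$. It therefore avoids the stable local gap and the comparison with $g_{2,K}$. What the paper's route buys is uniformity: the single estimate $\|v_{1,K}-g_{2,K}\|_2\to0$ also handles the trial vectors in \resultref{Proposition~}{prop:spectral-gap-upper-bound-interior}. This matters for the stable excited mode $w_{2,K}$, which lives where $v_{1,K}$ carries essentially all its mass, so a mass-vanishing argument gives nothing there.
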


\begin{proposition}\label{prop:spectral-gap-upper-bound-interior}
The two interior rates give the upper bound
\[
\limsup_{K\to\infty}(\rho_2(K)-\rho_1(K))
\le\min\{\gamma_1,\gamma_2\}.
\]
\end{proposition}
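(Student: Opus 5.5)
Since $\rho_1(K)\to0$ by \eqref{eq:rho1-vanishes}, it suffices to prove $\limsup_{K\to\infty}\rho_2(K)\le\gamma_i$ separately for $i=1,2$. I will use the two-dimensional form of the Courant--Fischer principle. If $F_K\subset D(Q_K)$ is a two-dimensional subspace, then $F_K$ contains a unit vector orthogonal to $v_{1,K}$, and therefore
\[
\rho_2(K)\le\max_{z\in F_K,\ \|z\|_2=1}Q_K(z).
\]
This is the ``projected local trial vector'' mechanism: it avoids having to estimate the overlap of individual trial vectors with the global ground state. It remains to exhibit, for each $i$, a subspace $F_K^{(i)}$ on which the Rayleigh quotient is at most $\gamma_i+o(1)$.

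The building blocks are lattice discretizations of oscillator eigenfunctions. Put $D_i=x_i\widetilde\lambda(x_i)$, $h_i=H''(x_i)$ and $y=(n-n_i(K))/\sqrt K$. On a window of width $\ell_K=K^{3/4}$ around $n_i(K)$, expand $a_n$ and $W_n$. The leading terms of order $K$ in $W_n=\lambda_n+\mu_n-a_n-a_{n-1}$ cancel. What remains should converge, in the rescaled variable, to the form $q_i$ of
\[
D_i\Bigl(-\partial_y^2+\tfrac{h_i^2}{4}y^2-\tfrac{h_i}{2}\Bigr).
\]
Its eigenvalues are $D_i|h_i|(j+\tfrac12)-D_ih_i/2$.
\begin{itemize}
\item For $i=2$, where $h_2>0$, this gives $0,\gamma_2,2\gamma_2,\dots$.
\item For $i=1$, where $h_1<0$, it gives $\gamma_1,2\gamma_1,\dots$.
\end{itemize}
Let $g_j^{(i)}$ be the $L^2$-normalized Hermite functions for $q_i$. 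Define $z^{(i)}_j(n)=K^{-1/4}\chi(n)\,g_j^{(i)}\bigl((n-n_i)/\sqrt K\bigr)$, where $\chi$ is a smooth cutoff equal to $1$ on the core window and supported in the enlarged window. The Gaussian decay of $g_j^{(i)}$ on the scale $\ell_K/\sqrt K=K^{1/4}$ makes the cutoff errors superpolynomially small. A Riemann-sum argument then gives
\[
\langle z^{(i)}_j,z^{(i)}_k\rangle=\delta_{jk}+o(1),\qquad
Q_K(z^{(i)}_j,z^{(i)}_k)=q_i(g^{(i)}_j,g^{(i)}_k)+o(1).
\]
I expect to obtain these from the local approximation estimates of Section~\ref{sec:local-estimates}, the same ones behind the Gaussian trial state of \resultref{Lemma~}{lem:x2-window-kernel-gap}(i).

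\emph{Bound $\gamma_2$.} Take $F_K^{(2)}=\operatorname{span}\{z^{(2)}_0,z^{(2)}_1\}$. Its Gram matrix tends to the identity, and the $2\times2$ matrix of $Q_K$ on it tends to $\operatorname{diag}(0,\gamma_2)$. The maximal Rayleigh quotient on $F_K^{(2)}$ therefore tends to $\gamma_2$.

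\emph{Bound $\gamma_1$.} Take $F_K^{(1)}=\operatorname{span}\{z^{(1)}_0,z^{(2)}_0\}$. For large $K$ the two windows are disjoint and separated by more than one site. Consequently $\langle z^{(1)}_0,z^{(2)}_0\rangle=0$, and the nearest-neighbour form gives $Q_K(z^{(1)}_0,z^{(2)}_0)=0$. The maximal Rayleigh quotient is then $\max\{Q_K(\hat z^{(1)}_0),Q_K(\hat z^{(2)}_0)\}\to\max\{\gamma_1,0\}=\gamma_1$, where hats denote normalization.

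\emph{Main obstacle.} The hard step is the uniform Taylor control of the Jacobi coefficients on the $K^{3/4}$ windows. The difference terms give $a_n(z_{n+1}-z_n)^2\approx D_iK\cdot K^{-1}|g'|^2$. The potential $W_n$ must be expanded to second order: $\lambda_n+\mu_n-2\sqrt{\lambda_n\mu_n}$ equals $\tfrac{K}{4}D_i\bigl(\ln(\mu_n/\lambda_n)\bigr)^2$ plus higher-order corrections, and $-(a_n+a_{n-1}-2\sqrt{\lambda_n\mu_n})$ supplies the $-D_ih_i/2$ term via the first derivative of $\ln(\mu/\lambda)$. Both need errors $O(|y|^3K^{-1/2}+K^{-1/2})$. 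Only $C^2$ regularity of the rates is available, so I would use the moduli of continuity of $\widetilde\lambda''$ and $\widetilde\mu''$ near $x_i$. This gives $o(1)$ errors instead of $O(K^{-1/2})$, which still suffices for a $\limsup$ statement. These errors are integrated against Gaussian weights, so polynomial growth in $y$ is harmless.
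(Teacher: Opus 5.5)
Your proof is correct, but it reaches the bound by a different mechanism from the paper's.

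\textbf{The paper's route.} The paper takes the actual local eigenvectors $g_{1,K}$ and $w_{2,K}$ of $Q_{1,K}$ and $Q_{2,K}$. It projects each onto $v_{1,K}^{\perp}$ via \eqref{eq:trial-variational-bound} and controls the overlap through $|\langle u,v_{1,K}\rangle|\le\|v_{1,K}-g_{2,K}\|_2\to0$, which is \resultref{Lemma~}{lem:x2-window-kernel-gap}(iii). That localization of the global ground state is derived from the IMS inequality together with the boundary, intermediate and tail coercivity lemmas. It also needs the local eigenvalue limits of \resultref{Lemma~}{lem:x1-window-precise} and \resultref{Lemma~}{lem:x2-window-kernel-gap}(ii). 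So the paper's upper bound leans on the lower-bound machinery, which it has to build anyway.

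\textbf{Your route.} The two-dimensional max--min form of Courant--Fischer needs no information about $v_{1,K}$ at all.
\begin{itemize}
\item For $\gamma_2$, it only needs the $2\times2$ Gram and energy matrices of two sampled oscillator eigenfunctions.
\item For $\gamma_1$, you pair the threshold mode with the stable Gaussian, which has vanishing energy. Separated supports make the two vectors orthogonal both in $\ell^2$ and for $Q_K$, so the maximal Rayleigh quotient is just the larger of the two individual ones.
\end{itemize}
This makes the upper bound independent of the lower bound; it uses only the recovery half of \resultref{Lemma~}{lem:interior-approximation}. The same device would also give \resultref{Proposition~}{prop:left-boundary-limsup-gap}, by pairing a finitely supported boundary trial vector with the stable Gaussian.

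\textbf{Two small remarks.}
\begin{itemize}
\item The step you flag as the main obstacle is milder than you expect. With $C^2$ rates, \eqref{eq:local-potential-expansion} plus first-order Taylor expansions of $q-r$ and $rq'-qr'$ already give remainders $O\bigl((1+|y|^3)K^{-1/2}\bigr)$ for $W_n$, and $O\bigl((1+|y|)K^{-1/2}\bigr)$ for $a_n/K$, uniformly on $J_{i,K}$. No moduli of continuity of second derivatives are needed.
\item You can avoid uniform window control altogether. Use the fixed-$R$ cutoffs $\eta_R g_j^{(i)}$, and obtain the off-diagonal entries from \eqref{eq:interior-recovery} by polarization, since $\mathcal S_{i,K}$ is linear. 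Then let $R\to\infty$ after $K\to\infty$.
\end{itemize}
Finally, $\rho_1(K)>0$ already gives $\rho_2-\rho_1\le\rho_2$, so $\rho_1(K)\to0$ is not needed for this direction.
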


These propositions are proved in Subsections~\ref{sec:lower-bound} and \ref{sec:upper-bound}.

\subsection{The discrete IMS formula}
\label{subsec:localization-partition}
To compare the contributions from the boundary and the two equilibria,
we use the discrete IMS identity for the nearest-neighbor Jacobi
operator; see also \cite[Lemma~3.3(a), Eq.~(3.15)]{Klein-Rosenberger2009}.
We construct the cutoffs first and then estimate the localization error.

Fix a small $\delta>0$ such that
\[
0<\delta<x_1<x_2<\frac1\delta.
\]
The parameter $\delta$ may be reduced later when applying the boundary and tail estimates. Set
\[
\ell_K:=K^{3/4}.
\]
This scale satisfies
\[
\ell_K\to\infty,
\qquad
\frac{\ell_K}{K}\to0,
\qquad
\frac{K}{\ell_K^2}=K^{-1/2}\to0.
\]
The localization windows grow on the microscopic scale but shrink relative to population size $K$, yielding an $o(1)$ localization error.

Define
\[
n_i(K):=\lfloor Kx_i\rfloor,
\qquad i=1,2.
\]
We index the five localization regions by $\Sigma:=\{b,0,1,2,r\}$.
Here $b$ denotes the left boundary layer and $r$ the far-right tail. The indices $1,2$ correspond to neighborhoods of the two equilibria, while $0$ denotes the intermediate regions separating them from each other and from the boundary and tail.

The core regions are
\[
I_{b,K}:=\{1,\dots,\lfloor\delta K\rfloor\},
\]
\[
I_{1,K}:=
\{n\in\mathbb N^*: |n-n_1(K)|\le \ell_K\},
\qquad
I_{2,K}:=
\{n\in\mathbb N^*: |n-n_2(K)|\le \ell_K\},
\]
\[
I_{0,K}:=
\left\{
n\in\mathbb N^*:
\lfloor\delta K\rfloor+\ell_K
\le n\le
\lfloor K/\delta\rfloor-\ell_K,\ 
|n-n_1(K)|\ge 2\ell_K,\ 
|n-n_2(K)|\ge 2\ell_K
\right\},
\]
\[
I_{r,K}:=\{n\in\mathbb N^*: n>\lfloor K/\delta\rfloor\}.
\]

Their enlargements are
\[
J_{b,K}:=
\{n\in\mathbb N^*: n\le \lfloor\delta K\rfloor+\ell_K\},
\]
\[
J_{1,K}:=
\{n\in\mathbb N^*: |n-n_1(K)|\le 2\ell_K\},
\qquad
J_{2,K}:=
\{n\in\mathbb N^*: |n-n_2(K)|\le 2\ell_K\},
\]
\[
J_{r,K}:=
\{n\in\mathbb N^*: n\ge \lfloor K/\delta\rfloor-\ell_K\},
\]
\[
J_{0,K}:=
\Bigl\{
n\in\mathbb N^*:
\lfloor\delta K\rfloor\le n\le \lfloor K/\delta\rfloor,\ 
|n-n_1(K)|\ge \ell_K,\ 
|n-n_2(K)|\ge \ell_K
\Bigr\}.
\]

For sufficiently large $K$, we have $I_{\sigma,K}\subset J_{\sigma,K}$ for every $\sigma\in\Sigma$, and the enlarged sets cover all of $\mathbb N^*$. Since $\ell_K=o(K)$, the core regions are pairwise disjoint for large $K$, and the enlarged sets overlap only in transition layers of width $O(\ell_K)$.

To construct a quadratic partition of unity subordinate to these regions, let
\[
\vartheta(t):=
\begin{cases}
1, & 0\le t\le \dfrac13,\\[0.6em]
2-3t, & \dfrac13<t<\dfrac23,\\[0.6em]
0, & t\ge \dfrac23.
\end{cases}
\]
This function satisfies $0\le\vartheta\le1$ and
\[
|\vartheta(t)-\vartheta(s)|\le3|t-s|,
\qquad s,t\ge0.
\]
For a subset $A\subset\mathbb N^*$, define the discrete distance by
\[
d(n,A):=\inf_{m\in A}|n-m|.
\]
Preliminary cutoffs are given by
\[
\widetilde\chi_{\sigma,K}(n)
:=
\vartheta\left(\frac{d(n,I_{\sigma,K})}{\ell_K}\right),
\qquad
\sigma\in\Sigma.
\]
They satisfy
\[
\widetilde\chi_{\sigma,K}\equiv1
\quad\text{on }I_{\sigma,K},
\qquad
\operatorname{supp}\widetilde\chi_{\sigma,K}\subset J_{\sigma,K},
\]
\[
|\widetilde\chi_{\sigma,K}(n+1)
-
\widetilde\chi_{\sigma,K}(n)|
\le 3\ell_K^{-1}.
\]

The gaps between adjacent core components have width at most
the cutoff width, up to integer-rounding errors. Thus for every
lattice point $n\in\mathbb N^*$ we have
\[
\min_{\sigma\in\Sigma}d(n,I_{\sigma,K})
\le \frac{\ell_K}{2}+2.
\]
Any point outside all core regions lies in one of these gaps, of
width at most $\ell_K+O(1)$. For sufficiently large $K$,
at least one preliminary cutoff therefore
takes value at least $1/3$ for each $n$, so
\[
\sum_{\sigma\in\Sigma}
\widetilde\chi_{\sigma,K}(n)^2
\ge \frac19,
\qquad n\in\mathbb N^*.
\]

The normalized cutoffs are
\[
\chi_{\sigma,K}(n):=
\frac{\widetilde\chi_{\sigma,K}(n)}
{\left(
\sum_{\tau\in\Sigma}
\widetilde\chi_{\tau,K}(n)^2
\right)^{1/2}},
\qquad
\sigma\in\Sigma.
\]
By construction,
\[
\sum_{\sigma\in\Sigma}
\chi_{\sigma,K}(n)^2
=1,
\qquad n\in\mathbb N^*,
\]
\[
\chi_{\sigma,K}\equiv1
\quad\text{on }I_{\sigma,K},
\qquad
\operatorname{supp}\chi_{\sigma,K}\subset J_{\sigma,K},
\]
\[
|\chi_{\sigma,K}(n+1)-\chi_{\sigma,K}(n)|
=O(\ell_K^{-1}),
\]
uniformly in $n\in\mathbb N^*$ and $\sigma\in\Sigma$. The implied
constant depends only on the fixed profile $\vartheta$ and the
number of regions.

Figure~\ref{fig:ims-localization} shows the resulting partition. The intermediate region $I_{0,K}$ consists of three disconnected lattice components separating the boundary layer, the two equilibrium windows, and the far-right tail. Panel (b) shows how the squared cutoffs sum to one across each transition layer.

\begin{figure}[!tbp]
\centering
\begingroup
\definecolor{imsBoundary}{RGB}{0,114,178}
\definecolor{imsMiddle}{RGB}{101,112,126}
\definecolor{imsThreshold}{RGB}{213,94,0}
\definecolor{imsStable}{RGB}{0,135,100}
\definecolor{imsTail}{RGB}{117,82,151}
\definecolor{imsInk}{RGB}{32,39,48}
%
\newcommand{\imsband}[4]{%
  \fill[#1!17] (#2,{#4-.29}) rectangle (#3,{#4+.29});
  \draw[#1!60,line width=.45pt] (#2,{#4-.29}) rectangle (#3,{#4+.29});
}
\newcommand{\imscore}[4]{%
  \fill[#1] (#2,{#4-.115}) rectangle (#3,{#4+.115});
}
\pgfmathdeclarefunction{imstheta}{1}{%
  \pgfmathparse{max(0,min(1,2-3*#1))}%
}
\pgfmathdeclarefunction{imsinner}{1}{%
  \pgfmathparse{pow(imstheta(max(abs(#1)-1,0)),2)/
    (pow(imstheta(max(abs(#1)-1,0)),2)+
     pow(imstheta(max(2-abs(#1),0)),2))}%
}
\resizebox{\textwidth}{!}{%
\begin{tikzpicture}[
  x=.66cm,y=.58cm,
  font=\small,text=imsInk,
  line cap=round,line join=round,>=stealth
]
\node[anchor=west,font=\bfseries] at (-4.8,15.1)
  {(a) Five localization regions on the population axis};
\node[anchor=east,font=\footnotesize,text=imsMiddle] at (20.8,14.45)
  {Not to scale; integer rounding suppressed};
\imsband{imsMiddle}{0}{1}{13.65}
\node[anchor=west] at (1.3,13.65) {enlarged region $J_{\sigma,K}$};
\imscore{imsMiddle}{10.4}{11.4}{13.65}
\node[anchor=west] at (11.7,13.65) {core region $I_{\sigma,K}$};
\foreach \a/\b in {2/3,5/6,8/9,11/12,14/15,17/18}{
  \fill[black!3] (\a,5.62) rectangle (\b,11.64);
}
\foreach \x in {2,7,13,18}{
  \draw[imsMiddle!45,densely dotted,line width=.55pt]
    (\x,5.62)--(\x,11.9);
}
\draw[->,line width=.75pt] (0,11.9)--(20.8,11.9)
  node[right] {$n$};
\foreach \x/\lab in {0/{1},2/{\delta K},7/{n_1(K)},13/{n_2(K)},18/{K/\delta}}{
  \draw[line width=.7pt] (\x,11.81)--(\x,11.99);
  \node[above=4pt] at (\x,11.9) {$\lab$};
}
\node[above=4pt] at (20.2,11.9) {$+\infty$};
\node[anchor=west] at (-4.8,10.65) {$b$: boundary};
\node[anchor=west] at (-4.8,9.5)  {$0$: intermediate};
\node[anchor=west] at (-4.8,8.35) {$1$: threshold};
\node[anchor=west] at (-4.8,7.2)  {$2$: stable};
\node[anchor=west] at (-4.8,6.05) {$r$: right tail};
\imsband{imsBoundary}{0}{3}{10.65}
\imscore{imsBoundary}{0}{2}{10.65}
\foreach \a/\b in {2/6,8/12,14/18}{
  \imsband{imsMiddle}{\a}{\b}{9.5}
}
\foreach \a/\b in {3/5,9/11,15/17}{
  \imscore{imsMiddle}{\a}{\b}{9.5}
}
\imsband{imsThreshold}{5}{9}{8.35}
\imscore{imsThreshold}{6}{8}{8.35}
\imsband{imsStable}{11}{15}{7.2}
\imscore{imsStable}{12}{14}{7.2}
\path[fill=imsTail!17,draw=imsTail!60,line width=.45pt]
  (17,5.76)--(20.08,5.76)--(20.75,6.05)--(20.08,6.34)--(17,6.34)--cycle;
\imscore{imsTail}{18}{20.08}{6.05}
\draw[imsTail,->,line width=1.15pt] (19.75,6.05)--(20.75,6.05);
\node[anchor=west,font=\footnotesize,text=imsMiddle] at (-4.8,5.17)
  {The intermediate region ($\sigma=0$) is the union of three lattice intervals.};
\node at (7.8,4.43) {
  $\displaystyle
  \chi_{\sigma,K}=1\ \text{on }I_{\sigma,K},\qquad
  \operatorname{supp}\chi_{\sigma,K}\subset J_{\sigma,K},\qquad
  \sum_{\sigma\in\{b,0,1,2,r\}}\chi_{\sigma,K}^{\,2}=1.$
};
\node[anchor=west,font=\bfseries] at (-4.8,3.33)
  {(b) Zoom near an interior equilibrium ($i=1,2$)};
\node[anchor=east,font=\footnotesize,text=imsMiddle] at (20.8,2.75)
  {$n_i=n_i(K)$};
\imsband{imsThreshold}{2}{18}{2.27}
\imscore{imsThreshold}{6}{14}{2.27}
\node[anchor=east,font=\footnotesize] at (1.6,2.27)
  {$I_{i,K}\subset J_{i,K}$};
\fill[imsMiddle!8] (2,-1.0) rectangle (6,1.55);
\fill[imsMiddle!8] (14,-1.0) rectangle (18,1.55);
\foreach \x in {2,6,14,18}{
  \draw[imsMiddle!40,densely dashed,line width=.45pt]
    (\x,-1.0)--(\x,1.55);
}
\draw[imsMiddle!35,densely dotted] (10,-1.0)--(10,1.55);
\draw[->,line width=.7pt] (0,-1.0)--(20.8,-1.0) node[right] {$n$};
\draw[->,line width=.7pt] (0,-1.0)--(0,1.65);
\node[anchor=east] at (-.2,-1.0) {$0$};
\node[anchor=east] at (-.2,1.2) {$1$};
\node[anchor=east,font=\footnotesize] at (-.2,1.65) {squared cutoffs};
\draw[imsMiddle!18,line width=.4pt] (0,1.2)--(20,1.2);
\draw[imsMiddle,densely dashed,line width=1.2pt,
      domain=-2.5:2.5,samples=301]
  plot ({10+4*\x},{-1+2.2*(1-imsinner(\x))});
\draw[imsThreshold,line width=1.35pt,
      domain=-2.5:2.5,samples=301]
  plot ({10+4*\x},{-1+2.2*imsinner(\x)});
\node[text=imsThreshold,fill=white,inner sep=2pt] at (10,.54)
  {$\chi_{i,K}^{\,2}$};
\node[text=imsMiddle,fill=white,inner sep=2pt] at (1.15,.76)
  {$\chi_{0,K}^{\,2}$};
\node[text=imsMiddle,fill=white,inner sep=2pt] at (19.0,.76)
  {$\chi_{0,K}^{\,2}$};
\node[font=\footnotesize,fill=white,inner sep=2pt] at (10,-.48)
  {$\chi_{i,K}^{\,2}+\chi_{0,K}^{\,2}=1$};
\foreach \x/\lab in {
  2/{n_i-2\ell_K},6/{n_i-\ell_K},10/{n_i},
  14/{n_i+\ell_K},18/{n_i+2\ell_K}}{
  \draw (\x,-1.0)--(\x,-1.12);
  \node[below=4pt,font=\footnotesize] at (\x,-1.0) {$\lab$};
}
\draw[<->,imsMiddle,line width=.6pt] (2,-1.99)--(6,-1.99);
\draw[<->,imsMiddle,line width=.6pt] (14,-1.99)--(18,-1.99);
\node[below=2pt,font=\footnotesize] at (4,-1.99) {overlap: $\ell_K$};
\node[below=2pt,font=\footnotesize] at (16,-1.99) {overlap: $\ell_K$};
\draw[<->,imsThreshold,line width=.6pt] (6,-1.99)--(14,-1.99);
\node[below=2pt,font=\footnotesize] at (10,-1.99)
  {core width: $2\ell_K$};
\draw[imsMiddle!40,line width=.5pt] (-4.8,-2.88)--(20.8,-2.88);
\node at (8,-3.8) {
  $\displaystyle
  \underbrace{\sqrt K}_{\text{fluctuations}}
  =o(\ell_K),\quad
  \underbrace{\ell_K}_{\text{cutoff width}}=o(K),\qquad
  |\chi_{\sigma,K}(n+1)-\chi_{\sigma,K}(n)|
  =O(\ell_K^{-1}).
  $
};
\node at (8,-5.5) {
  $\displaystyle
  Q_K(\zeta)\ge
  \sum_{\sigma\in\{b,0,1,2,r\}}Q_K(\chi_{\sigma,K}\zeta)
  -\underbrace{C_{\mathrm{IMS},\delta}K^{-1/2}\|\zeta\|_2^2}_{\text{IMS localization error}}.
  $
};
\end{tikzpicture}%
}
\endgroup
\caption{The regions and cutoffs used in the discrete IMS localization.
(a) Dark bands denote the main regions $I_{\sigma,K}$ and pale bands
their enlargements $J_{\sigma,K}$.
(b) Squared normalized cutoffs near an interior equilibrium
$n_i(K)$, $i=1,2$; only $\chi_{i,K}$ and $\chi_{0,K}$ can be nonzero
there. The shaded overlaps have width of order $\ell_K=K^{3/4}$.
The schematic is not to scale and suppresses integer rounding.
The error constant is the one fixed in \eqref{eq:discrete-IMS}.}
\label{fig:ims-localization}
\end{figure}

Define
\[
    \Gamma_K(n)
    :=
    \sum_{\sigma\in\Sigma}
    \bigl(
        \chi_{\sigma,K}(n+1)-\chi_{\sigma,K}(n)
    \bigr)^2,
    \qquad n\geq1.
\]
The partition-of-unity identity $\sum_{\sigma\in\Sigma}\chi_{\sigma,K}(n)^2=1$ gives
\[
\begin{aligned}
&\sum_{\sigma\in\Sigma}
\bigl(
    \chi_{\sigma,K}(n+1)\zeta_{n+1}
    -
    \chi_{\sigma,K}(n)\zeta_n
\bigr)^2
\\
&\qquad
=
(\zeta_{n+1}-\zeta_n)^2
+
\zeta_n\zeta_{n+1}\Gamma_K(n).
\end{aligned}
\]
Summing over $n$ and applying the same partition identity to the potential terms of $Q_K$ gives the discrete IMS formula
\begin{equation}
\label{eq:discrete-IMS-identity}
    \sum_{\sigma\in\Sigma}
    Q_K(\chi_{\sigma,K}\zeta)
    =
    Q_K(\zeta)
    +
    \sum_{n\geq1}
    a_n\zeta_n\zeta_{n+1}\Gamma_K(n).
\end{equation}
The equality holds for finitely supported $\zeta\in c_{00}(\mathbb N^*)$ and extends by density to the full form domain $D(Q_K)$. Rearranging gives the lower bound
\[
    Q_K(\zeta)
    \geq
    \sum_{\sigma\in\Sigma}
    Q_K(\chi_{\sigma,K}\zeta)
    -
    \sum_{n\geq1}
    a_n|\zeta_n\zeta_{n+1}|\Gamma_K(n).
\]

By construction,
\[
    \Gamma_K(n)=O(\ell_K^{-2})
\]
uniformly in $n$. The implied constant depends only on the fixed
cutoff profile and the number of regions. Moreover, $\Gamma_K(n)$
is nonzero only in transition layers, all contained in
$1\le n\le2K/\delta$ for sufficiently large $K$. On these layers,
$n/K$ and $(n+1)/K$ lie in a fixed compact interval depending on
$\delta$. The per-capita rates are uniformly bounded there, giving
\[
    a_n
    =
    \sqrt{\lambda_n\mu_{n+1}}
    =O_\delta(K)
\]
uniformly on $\operatorname{supp}\Gamma_K$. Applying the elementary
estimate $\sum_{n\ge1}|\zeta_n\zeta_{n+1}|\le\|\zeta\|_2^2$ gives
\[
    \sum_{n\geq1}
    a_n|\zeta_n\zeta_{n+1}|\Gamma_K(n)
    =O_\delta\left(\frac{K}{\ell_K^2}\right)\|\zeta\|_2^2,
\]
uniformly in $\zeta\in D(Q_K)$. Since $K/\ell_K^2=K^{-1/2}$,
we can fix a constant
$C_{\mathrm{IMS},\delta}>0$, independent of $K$ and $\zeta$, such that
\begin{equation}
\label{eq:discrete-IMS}
    Q_K(\zeta)
    \geq
    \sum_{\sigma\in\Sigma}
    Q_K(\chi_{\sigma,K}\zeta)
    -
    C_{\mathrm{IMS},\delta}K^{-1/2}
    \|\zeta\|_2^2.
\end{equation}
For unit vectors, the localization error is $O_\delta(K^{-1/2})$,
as needed for the variational lower bound.
\section{Local spectral estimates}
\label{sec:local-estimates}

We now estimate the quadratic form on each region of the partition constructed in Subsection~\ref{subsec:localization-partition}. Throughout this section, $\delta$ is fixed and sufficiently small. Unless otherwise stated, constants may depend on the rate functions and on $\delta$, but not on $K$.

\subsection{Interior approximation and compactness}
\label{subsec:interior-approximation}

Both interior windows can be treated using the same sampling and
interpolation estimates. Sampling approximates the norm and energy of
smooth test functions, while interpolation gives compactness in
$L^2(\mathbb R)$ for unit vectors with uniformly bounded energy and
preserves the norm in the limit. The following lemma also establishes
lower semicontinuity of the energy and convergence of inner products.
These estimates will be used in
\resultref{Lemmas~}{lem:x1-window-precise}
and~\resultref{}{lem:x2-window-kernel-gap}.

For $i=1,2$, define
\[
E_{i,K}:=\{z\in\ell^2(\mathbb N^*):\operatorname{supp}z\subset J_{i,K}\},
\qquad Q_{i,K}:=Q_K|_{E_{i,K}},
\]
and set
\[
D_i:=x_i\widetilde\lambda(x_i),\qquad b_i:=V'(x_i),\qquad
d_K:=K^{-1/2},\qquad y_{i,n}:=\frac{n-n_i(K)}{\sqrt K}.
\]
Thus $b_1=\gamma_1$ and $b_2=-\gamma_2$. In the notation $\psi_{i,j}$ used below, $i$ identifies the equilibrium window, $j$ is the excitation index, and $j=0$ corresponds to the local ground state.

Since $E_{i,K}$ is finite-dimensional and contained in $c_{00}$, the restricted quadratic form corresponds to a finite symmetric matrix. Vectors in this subspace are extended by zero outside $J_{i,K}$, so their energy includes the contributions from edges crossing the window boundary.

\begin{lemma}[Interior approximation]
\label{lem:interior-approximation}
For each $i\in\{1,2\}$, let $q_i$ denote the oscillator-type limiting quadratic form given in \eqref{eq:interior-limit-form}. The sampling operator $\mathcal S_{i,K}$ and interpolation operator $\mathcal I_{i,K}$ defined below satisfy the following properties.

For every $\psi\in C_c^\infty(\mathbb R)$,
\[
\|\mathcal S_{i,K}\psi\|_2^2\longrightarrow\|\psi\|_{L^2}^2,
\qquad Q_K(\mathcal S_{i,K}\psi)\longrightarrow q_i(\psi).
\]

For any sequence of unit vectors $z^{(K)}\in E_{i,K}$ with uniformly bounded quadratic-form energy, a subsequence of the interpolations converges strongly in $L^2(\mathbb R)$ and weakly in $H^1(\mathbb R)$ to a unit-norm limit $\Psi\in D(q_i)$. Along this subsequence,
\[
q_i(\Psi)\le\liminf_{K\to\infty}Q_K(z^{(K)}).
\]

For two sequences in the same local subspaces with uniformly bounded norms and energies, strong convergence of their interpolations implies convergence of their discrete inner products to the inner product of the limits. In particular, orthogonality is preserved in the limit.
\end{lemma}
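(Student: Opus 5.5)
We first fix the local objects, then prove the sampling limits by Taylor expansion, then obtain compactness from a discrete energy bound.

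\textbf{Definitions.} Set
\[
(\mathcal S_{i,K}\psi)_n:=K^{-1/4}\psi(y_{i,n})\,\mathbf 1_{J_{i,K}}(n).
\]
Let $\mathcal I_{i,K}z$ be the piecewise-linear function on the mesh $d_K\mathbb Z$ (shifted by $n_i(K)$), scaled by $K^{1/4}$, with nodal values $z_n$. Then $\|\mathcal I_{i,K}z\|_{L^2}^2$ agrees with $\|z\|_2^2$ up to a bounded multiple of $\sum_n(z_{n+1}-z_n)^2$. The limiting form is
\[
q_i(\psi)=D_i\int_{\mathbb R}|\psi'|^2\,dy+\int_{\mathbb R}\Bigl(\frac{b_i^2}{4D_i}y^2+\frac{b_i}{2}\Bigr)\psi^2\,dy .
\]
Its ground level is $0$ when $b_i<0$ and $b_i$ when $b_i>0$. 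These give the shifted oscillators quoted after the theorem.

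\textbf{Key computation.} Expand $a_n$ and $W_n$ near $n_i(K)$. We have $\lambda_n,\mu_n=KD_i+O(|n-n_i|)+\cdots$ and $a_n=KD_i+O(|n-n_i|)$, so $a_n d_K^{-2}\cdot K^{-1}\to D_i$ in the scaled variable. For the potential, write $\lambda_n+\mu_n-a_n-a_{n-1}=(\sqrt{\lambda_n}-\sqrt{\mu_n})^2+(\text{discrete derivative terms})$. Here
\[
\sqrt{\lambda_n}-\sqrt{\mu_n}\approx \frac{V(n/K)\sqrt K}{2\sqrt{D_i}}\cdot K^{-1/2}\cdot\ldots,
\]
which gives $(\sqrt{\lambda_n}-\sqrt{\mu_n})^2\approx b_i^2 y^2/(4D_i)$. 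The remainder $\sqrt{\mu_n}(\sqrt{\mu_n}-\sqrt{\mu_{n+1}})$-type terms telescope to the constant $b_i/2$ up to $O(K^{-1/2}(1+|y|^3))$. The $C^2$ regularity suffices for this. On the window $|y|\le 2K^{1/4}$ the errors are $O(K^{-1/2}|y|^3)+O(K^{-1/2}|y|)$. These are small against $y^2$ only on compacta, but uniformly one gets
\[
W_n\ge \tfrac{b_i^2}{8D_i}y^2+\tfrac{b_i}{2}-o(1),
\]
because $|y|^3K^{-1/2}\le 2K^{-1/4}y^2$ there.

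\textbf{Sampling.} For $\psi\in C_c^\infty$, the Riemann sums give $\|\mathcal S_{i,K}\psi\|_2^2\to\|\psi\|^2_{L^2}$. The differences satisfy $(z_{n+1}-z_n)\approx K^{-1/4}d_K\psi'$, so $\sum a_n(z_{n+1}-z_n)^2\to D_i\int|\psi'|^2$. The potential sum converges by the expansion above on the fixed support.

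\textbf{Compactness.} Take unit vectors $z^{(K)}$ with bounded energy. The uniform lower bound on $W_n$ together with $a_n\ge cK$ controls $\int|(\mathcal I z)'|^2$ and $\int y^2|\mathcal Iz|^2$. This gives $H^1$ bounds and tightness, so Rellich yields a strongly $L^2$ convergent and weakly $H^1$ convergent subsequence with unit-norm limit. The piecewise-linear versus piecewise-constant discrepancy is $O(K^{-1/2})$ times the energy. Lower semicontinuity follows from weak lower semicontinuity of $\int|\Psi'|^2$ and Fatou for the potential; the constant $b_i/2$ passes to the limit since the norm converges. Inner products converge because the discrete inner product equals the $L^2$ inner product of the piecewise-constant interpolants, which converge strongly.

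\textbf{Main obstacle.} The hard step is the uniform-in-window lower bound on $W_n$ with exact constant $b_i/2$. It needs a careful discrete second-order expansion of $a_n$ and $a_{n-1}$. I would first rewrite $W_n$ as
\[
\bigl(\sqrt{\lambda_n}-\sqrt{\mu_n}\bigr)^2+\sqrt{\lambda_n}\bigl(\sqrt{\mu_n}-\sqrt{\mu_{n+1}}\bigr)+\sqrt{\mu_n}\bigl(\sqrt{\lambda_n}-\sqrt{\lambda_{n-1}}\bigr),
\]
and expand each term to second order.
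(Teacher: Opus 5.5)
Your proposal is correct and follows essentially the paper's route. The shared steps are: the same splitting of $W_n$ into $(\sqrt{\lambda_n}-\sqrt{\mu_n})^2$ plus two first-difference terms tending to $rq'-qr'$; sampling with weight $d_K^{1/2}$; piecewise-linear and piecewise-constant interpolants with discrete $H^1$ and $y^2$-moment bounds; Rellich plus tightness; and lower semicontinuity of the gradient and potential parts.

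The one variation is your uniform bound $W_n\ge \frac{b_i^2}{8D_i}y^2+\frac{b_i}{2}-o(1)$ on all of $J_{i,K}$. The paper uses only the cruder transversality bound $W_n\ge cy^2-C$, together with the exact expansion on $|y|\le R$ and an additive shift $M_i$ that cancels because $\|z^{(K)}\|_2=\|\Psi\|_{L^2}=1$. So the uniform exact constant you flag as the main obstacle is not actually needed. Also, the quantity tending to $D_i$ is $a_n/K$, not $a_nd_K^{-2}K^{-1}$.
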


\begin{proof}
Introduce
\[
f(x)=x\widetilde\lambda(x),\qquad
g(x)=x\widetilde\mu(x),\qquad
q=\sqrt f,\qquad r=\sqrt g.
\]
On compact subsets of $(0,\infty)$, these functions are $C^2$. With $x=n/K$ and mesh size $h=1/K$, we have
\[
a_n=Kq(x)r(x+h),\qquad a_{n-1}=Kq(x-h)r(x),
\]
and hence
\[
W_n=K(q(x)-r(x))^2+
Kq(x)(r(x)-r(x+h))+Kr(x)(q(x)-q(x-h)).
\]

Taylor expansion on a slightly enlarged compact interval, where the second derivatives are uniformly bounded, gives
\begin{equation}\label{eq:local-potential-expansion}
W_n=K(q-r)^2(x)+(rq'-qr')(x)+O(K^{-1}).
\end{equation}

At each equilibrium point,
\[
q(x_i)=r(x_i)=\sqrt{D_i},\qquad
(q-r)'(x_i)=\frac{b_i}{2\sqrt{D_i}},\qquad
(rq'-qr')(x_i)=\frac{b_i}{2}.
\]

By transversality, $|q(x)-r(x)|/|x-x_i|$ is bounded away from zero
in a sufficiently small fixed neighborhood of $x_i$, with $x\ne x_i$.
For large $K$, all lattice points in $J_{i,K}$ and their neighbors
lie in this neighborhood. Set
\[
x-x_i=d_Ky_{i,n}+\epsilon_{i,K},\qquad
\epsilon_{i,K}:=n_i(K)/K-x_i\in[-K^{-1},0].
\]
Then $K|x-x_i|^2\ge y_{i,n}^2/2-K^{-1}$. Positivity of $q,r$ and
the error bound in \eqref{eq:local-potential-expansion} give constants
$c,C>0$, independent of $K$ and $i\in\{1,2\}$, such that
\begin{equation}\label{eq:local-coercive-bounds}
cK\le a_n\le CK,\qquad
W_n\ge c y_{i,n}^2-C\quad(n\in J_{i,K}),
\end{equation}
where the bound on $a_n$ applies to every edge adjacent to $J_{i,K}$.
Since $|n/K-x_i|=O(K^{-1/4})$ uniformly on $J_{i,K}$, the entire
window stays inside the fixed neighborhood. Uniform $o(1)$ Taylor
remainders are only required over fixed intervals in the rescaled
coordinate $y_{i,n}$.

For fixed $R>0$, Taylor expansion on $|y_{i,n}|\le R$ gives
\[
q(x)-r(x)=\frac{b_i}{2\sqrt{D_i}}d_Ky_{i,n}+O_R(K^{-1}),
\qquad (rq'-qr')(x)=\frac{b_i}{2}+O_R(K^{-1/2}).
\]
Substitution into the coefficient formulas yields
\begin{equation}\label{eq:local-fixed-window-expansions}
a_n=D_iK+O_R(\sqrt K),\qquad
W_n=\frac{b_i^2}{4D_i}y_{i,n}^2+\frac{b_i}{2}+O_R(K^{-1/2}).
\end{equation}

The limiting quadratic form is therefore
\begin{equation}\label{eq:interior-limit-form}
q_i(\psi)=D_i\int_{\mathbb R}|\psi'|^2\,dy+
\int_{\mathbb R}p_i(y)|\psi|^2\,dy,
\qquad p_i(y):=\frac{b_i^2}{4D_i}y^2+\frac{b_i}{2},
\end{equation}
with form domain $H^1(\mathbb R)\cap L^2(\mathbb R,y^2dy)$.

Given $\psi\in C_c^\infty(\mathbb R)$, define the sampled sequence
\[
(\mathcal S_{i,K}\psi)_n=d_K^{1/2}\psi(y_{i,n}).
\]
For fixed $\psi$ and sufficiently large $K$, this vector is supported in $I_{i,K}$, whose width in rescaled coordinates is of order $K^{1/4}$. Its squared norm is a Riemann sum for the squared $L^2$ norm. The gradient contribution to the energy is
\[
\sum_n a_n\bigl((\mathcal S_{i,K}\psi)_{n+1}
                 -(\mathcal S_{i,K}\psi)_n\bigr)^2
=d_K\sum_n\frac{a_n}{K}
\left(\frac{\psi(y_{i,n}+d_K)-\psi(y_{i,n})}{d_K}\right)^2.
\]
On the compact support and neighboring cells, $a_n/K\to D_i$
uniformly, and the difference quotients converge uniformly to $\psi'$.
The potential term $d_K\sum_n W_n|\psi(y_{i,n})|^2$ converges by
\eqref{eq:local-fixed-window-expansions}. Hence
\begin{equation}\label{eq:interior-recovery}
\|\mathcal S_{i,K}\psi\|_2^2\to\|\psi\|_{L^2}^2,
\qquad Q_K(\mathcal S_{i,K}\psi)\to q_i(\psi).
\end{equation}

Now fix an even cutoff function $\eta\in C_c^\infty(\mathbb R)$ satisfying $0\le\eta\le1$, $\eta\equiv1$ on $[-1,1]$, and $\operatorname{supp}\eta\subset[-2,2]$. Write $\eta_R(y):=\eta(y/R)$. If $\psi$ is a polynomial multiplied by a Gaussian, then
\[
\|(1-\eta_R)\psi\|_{L^2}+
\|y(1-\eta_R)\psi\|_{L^2}+
\|((1-\eta_R)\psi)'\|_{L^2}\longrightarrow0.
\]
For the derivative term, we use
$((1-\eta_R)\psi)'=(1-\eta_R)\psi'-R^{-1}\eta'(y/R)\psi$.
Gaussian decay makes each term tend to zero, and hence $q_i(\eta_R\psi)\to q_i(\psi)$. Below, we keep $R$ fixed as $K\to\infty$, and then let $R\to\infty$.

For the compactness statement, fix an energy bound $C_0>0$ and
consider unit vectors $z^{(K)}\in E_{i,K}$ with
$Q_K(z^{(K)})\le C_0$, extended by zero to all integer indices.
The constants below may depend on $C_0$, but are independent of
$K$ and of the particular sequence satisfying this bound.
The lower bounds in \eqref{eq:local-coercive-bounds} give,
uniformly over these vectors,
\begin{equation}\label{eq:interior-discrete-energy-control}
K\sum_{n\in\mathbb Z}|z_{n+1}^{(K)}-z_n^{(K)}|^2+
\sum_{n\in\mathbb Z}y_{i,n}^2|z_n^{(K)}|^2=O_{C_0}(1).
\end{equation}
Indeed, the negative contribution of the potential is bounded by a constant multiple of $\|z^{(K)}\|_2^2=1$, and the lower bound for $a_n$ applies to every edge on which the difference is nonzero.

Let $F_{i,K}=\mathcal I_{i,K}z^{(K)}$ be the continuous piecewise-linear interpolation with value $d_K^{-1/2}z_n^{(K)}$ at each grid point $y_{i,n}$, and let $P_{i,K}$ be the piecewise-constant interpolation with the same value on $[y_{i,n},y_{i,n+1})$. On each cell, writing $y=y_{i,n}+td_K$, $0\le t\le1$, gives
\[
F_{i,K}(y)=d_K^{-1/2}\bigl((1-t)z_n^{(K)}+tz_{n+1}^{(K)}\bigr).
\]

Integration in $t$ followed by summation over the cells gives
\begin{equation}\label{eq:interior-interpolation-identities}
\begin{aligned}
\|F_{i,K}\|_{L^2}^2
 &=\|z^{(K)}\|_2^2-\frac16\sum_n|z_{n+1}^{(K)}-z_n^{(K)}|^2,\\
\|F_{i,K}'\|_{L^2}^2
 &=K\sum_n|z_{n+1}^{(K)}-z_n^{(K)}|^2,\\
\|P_{i,K}\|_{L^2}^2&=\|z^{(K)}\|_2^2,\qquad
\|P_{i,K}-F_{i,K}\|_{L^2}^2
 =\frac13\sum_n|z_{n+1}^{(K)}-z_n^{(K)}|^2.
\end{aligned}
\end{equation}
Because the vectors are extended by zero, these formulas include the edges at the boundary of the support. Consequently,
$\|F_{i,K}\|_{L^2}^2=1+O_{C_0}(K^{-1})$,
$\|F_{i,K}'\|_{L^2}=O_{C_0}(1)$, and
$\|P_{i,K}-F_{i,K}\|_{L^2}=O_{C_0}(K^{-1/2})$.

Convexity gives
\[
|F_{i,K}(y)|^2\le d_K^{-1}
\bigl((1-t)|z_n^{(K)}|^2+t|z_{n+1}^{(K)}|^2\bigr).
\]
Within the same cell, $y^2\le2y_{i,n}^2+2d_K^2$ and
$y^2\le2y_{i,n+1}^2+2d_K^2$. Integrating over cells and summing,
\begin{equation}\label{eq:interior-interpolated-moment}
\int_{\mathbb R}y^2|F_{i,K}(y)|^2\,dy
\le 2\sum_n(y_{i,n}^2+d_K^2)|z_n^{(K)}|^2=O_{C_0}(1).
\end{equation}
The same bound holds for $P_{i,K}$. Fix $M>0$, depending only on
$C_0$ and the fixed rate functions, that bounds both moments for
all sufficiently large $K$ and all vectors under consideration.
In particular, $M$ is independent of the interval radius $R$ used below.

Thus $F_{i,K}$ is uniformly bounded in $H^1(\mathbb R)$. Weak compactness and the Rellich embedding theorem on bounded intervals give a subsequence converging to $\Psi$ weakly in $H^1(\mathbb R)$ and strongly in $L^2(-R,R)$ for every fixed finite $R$. Moreover, \eqref{eq:interior-interpolated-moment} gives
\[
\int_{|y|>R}|F_{i,K}(y)|^2\,dy\le M/R^2.
\]
Passing to the limit first on bounded intervals gives
$\int y^2|\Psi|^2\,dy\le M$, so $\Psi$ satisfies the same
tail bound. Hence
\[
\limsup_{K\to\infty}\|F_{i,K}-\Psi\|_{L^2(\mathbb R)}^2\le 4M/R^2.
\]
Letting $R\to\infty$ gives strong convergence on the whole real line. The norm identities imply $\|\Psi\|_{L^2}=1$, and the moment bound gives
$\Psi\in H^1(\mathbb R)\cap L^2(\mathbb R,y^2dy)$. We have therefore proved
\begin{equation}\label{eq:interior-compactness}
\mathcal I_{i,K}z^{(K)}\to\Psi\text{ in }L^2(\mathbb R),\qquad
\mathcal I_{i,K}z^{(K)}\rightharpoonup\Psi\text{ in }H^1(\mathbb R),
\quad\|\Psi\|_{L^2}=1.
\end{equation}
Moreover $P_{i,K}\to\Psi$ strongly in $L^2(\mathbb R)$ by \eqref{eq:interior-interpolation-identities}.

To compare the energies, choose a constant $M_i>0$, independent of $K$, such that
$W_n+M_i\ge0$ on $J_{i,K}$ for all sufficiently large $K$ and $p_i(y)+M_i\ge0$ on $\mathbb R$. Such a choice is possible by \eqref{eq:local-coercive-bounds}. Define the piecewise-constant coefficients
$A_{i,K}(y)=a_n/K$, $B_{i,K}(y)=W_n+M_i$ on intervals
$[y_{i,n},y_{i,n+1})$ for $n\ge1$, and set them
to zero on the remaining cells, where both interpolants vanish
for large $K$. Then
\[
Q_K(z^{(K)})+M_i\|z^{(K)}\|_2^2
=\int_{\mathbb R}A_{i,K}|F_{i,K}'|^2\,dy
 +\int_{\mathbb R}B_{i,K}|P_{i,K}|^2\,dy.
\]
The gradient integrand is nonnegative. The potential integrand is
also nonnegative, since $P_{i,K}$ can be nonzero only on cells
whose left grid point has index in $J_{i,K}$.

For each fixed $R$, \eqref{eq:local-fixed-window-expansions} gives
\[
A_{i,K}\to D_i,\qquad B_{i,K}\to p_i+M_i
\quad\hbox{uniformly on }[-R,R].
\]
The uniform gradient bound controls the error when $A_{i,K}$ is replaced by $D_i$. Weak lower semicontinuity then yields
\[
\liminf_{K\to\infty}\int_{-R}^R A_{i,K}|F_{i,K}'|^2\,dy
\ge D_i\int_{-R}^R|\Psi'|^2\,dy.
\]
For the potential term, strong $L^2$ convergence gives
\[
\bigl\||P_{i,K}|^2-|\Psi|^2\bigr\|_{L^1(-R,R)}
\le (\|P_{i,K}\|_{L^2}+\|\Psi\|_{L^2})\|P_{i,K}-\Psi\|_{L^2}\to0.
\]
Since $p_i+M_i$ is bounded over $[-R,R]$,
\[
\int_{-R}^R B_{i,K}|P_{i,K}|^2\,dy
\longrightarrow\int_{-R}^R(p_i+M_i)|\Psi|^2\,dy.
\]
Discarding the nonnegative contributions from $|y|>R$ and taking $K\to\infty$ followed by $R\to\infty$, we obtain
\[
\liminf_{K\to\infty}Q_K(z^{(K)})+M_i
\ge q_i(\Psi)+M_i\|\Psi\|_{L^2}^2.
\]
Since $\|\Psi\|_{L^2}=1$, the constant terms cancel, giving
\begin{equation}\label{eq:interior-form-liminf}
\liminf_{K\to\infty}Q_K(z^{(K)})\ge q_i(\Psi).
\end{equation}
The shift by $M_i$ also covers the stable oscillator, whose potential contains a negative constant term.

It remains to check convergence of inner products. Write $\Delta z_n:=z_{n+1}-z_n$. For two sequences
$z^{(K)},w^{(K)}\in E_{i,K}$ with bounded norms and energies, we have
$\sum|\Delta z^{(K)}|^2+\sum|\Delta w^{(K)}|^2=O(K^{-1})$.
Here and in the following correction estimate, the implied constants
depend only on the prescribed norm and energy bounds and the fixed rates.
Polarizing the first identity in
\eqref{eq:interior-interpolation-identities} gives
\[
\langle\mathcal I_{i,K}z^{(K)},\mathcal I_{i,K}w^{(K)}\rangle_{L^2}
=\langle z^{(K)},w^{(K)}\rangle
-\frac16\sum_n\Delta z_n^{(K)}\Delta w_n^{(K)}.
\]
Cauchy--Schwarz shows that the correction term is $O(K^{-1})$. If the interpolants converge strongly to $Z,W$, then
$\langle z^{(K)},w^{(K)}\rangle\to\langle Z,W\rangle_{L^2}$.
For a fixed smooth compactly supported function $\psi$, standard interpolation estimates also give
$\mathcal I_{i,K}\mathcal S_{i,K}\psi\to\psi$ in $L^2$. These facts allow us to project trial vectors and pass orthogonality to the limit in the estimates for the second local eigenvalue.
\end{proof}

\subsection{The local bounds}
\label{subsec:local-bounds}

We begin with the left boundary layer, where the per-capita rates are compared
with their values at the absorbing boundary. Set
\[
\alpha_0:=\widetilde\lambda(0),\qquad
\beta_0:=\widetilde\mu(0),\qquad
r_0:=\alpha_0/\beta_0\in(0,1).
\]
Recall that $\gamma_0=\beta_0-\alpha_0>0$.

\begin{lemma}[Left boundary layer]\label{lem:left-boundary-coercive}
For every $\varepsilon\in(0,\gamma_0)$, there exist
$\delta_\varepsilon\in(0,x_1/2)$ and $K_\varepsilon\ge1$ such that
\[
Q_K(z)\ge(\gamma_0-\varepsilon)\|z\|_2^2
\]
holds whenever $0<\delta\le\delta_\varepsilon$, $K\ge K_\varepsilon$, and
$\operatorname{supp}z\subset J_{b,K}$.
In particular, the estimate applies to $z=\chi_{b,K}\zeta$.
\end{lemma}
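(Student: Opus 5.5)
The plan is a ground-state (Barta/Allegretto--Piepenbrink) argument with the explicit positive test function $h_n:=n$, in the original coordinates $u=S_K^{-1}z$. The key observation is that for the frozen linear chain with rates $n\alpha_0$, $n\beta_0$ and $h_0=0$, this $h$ is an exact eigenfunction:
\[
n\alpha_0(h_n-h_{n+1})+n\beta_0(h_n-h_{n-1})=n(\beta_0-\alpha_0)=\gamma_0h_n .
\]
For the true rates the same computation gives
\[
(-L_Kh)_n=\mu_n-\lambda_n=n\bigl(\widetilde\mu(n/K)-\widetilde\lambda(n/K)\bigr),\qquad n\ge1,
\]
again with $h_0=0$. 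There is no Taylor error in $h$ itself; only the per-capita rates move.

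\textbf{Step 1 (choice of $\delta_\varepsilon$ and $K_\varepsilon$).} By continuity of $\widetilde\mu-\widetilde\lambda$ at $0$ (from (A1)), I choose $\delta_\varepsilon\in(0,x_1/2)$ such that
\[
\widetilde\mu(x)-\widetilde\lambda(x)\ge\gamma_0-\varepsilon\qquad\text{for }0\le x\le 2\delta_\varepsilon .
\]
I then choose $K_\varepsilon$ with $\ell_K=K^{3/4}\le\delta K$ for $K\ge K_\varepsilon$. This threshold depends on $\delta$, but taking $K_\varepsilon$ large enough for $\delta=\delta_\varepsilon$ works for smaller $\delta$ after enlarging; alternatively, I simply note $J_{b,K}\subset\{n\le 2\delta_\varepsilon K\}$ once $\ell_K\le\delta_\varepsilon K$, which uses only $\delta\le\delta_\varepsilon$. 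Hence every $n\in J_{b,K}$ satisfies $n/K\le2\delta_\varepsilon$, and so
\[
(-L_Kh)_n\ge(\gamma_0-\varepsilon)h_n\qquad\text{on }J_{b,K}.
\]

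\textbf{Step 2 (ground-state representation).} For finitely supported $u$ (here $\operatorname{supp}u\subset J_{b,K}$ is finite), write $u_n=h_nw_n$ with $w_0:=0$. Using detailed balance $\lambda_n\pi_n=\mu_{n+1}\pi_{n+1}$, summation by parts should give the identity
\[
\mathcal E_K(u,u)=\sum_{n\ge1}\lambda_n\pi_nh_nh_{n+1}(w_{n+1}-w_n)^2+\sum_{n\ge1}\pi_n\frac{(-L_Kh)_n}{h_n}u_n^2 .
\]
I would verify this by expanding $(h_{n+1}w_{n+1}-h_nw_n)^2$. The boundary term $\mu_1\pi_1u_1^2$ is absorbed because $h_0=0$ contributes $\mu_1h_1\cdot h_1 w_1^2$ to the potential part. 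The first sum is nonnegative, and the second involves only indices with $u_n\neq0$, all of which lie in $J_{b,K}$. Therefore
\[
\mathcal E_K(u,u)\ge(\gamma_0-\varepsilon)\|u\|_{\ell^2(\pi)}^2 .
\]

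\textbf{Step 3 (transfer to $Q_K$).} Since $Q_K(z)=\mathcal E_K(S_K^{-1}z,S_K^{-1}z)$ and $S_K$ is unitary with $S_K$ preserving supports, the bound becomes $Q_K(z)\ge(\gamma_0-\varepsilon)\|z\|_2^2$. For $z=\chi_{b,K}\zeta$, the support lies in $J_{b,K}$, which is finite, so $z\in c_{00}$ and no density argument is needed.

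The main obstacle is Step 2: getting the discrete ground-state identity exactly right, including the boundary edge at $n=1$ and the observation that the nonnegativity of the weighted difference term does not require any condition on $h$ outside $\operatorname{supp}u$ (only $h>0$ there). If the exact identity proves awkward, my fallback is the weaker route via Cauchy--Schwarz:
\[
2u_nu_{n+1}\le\frac{h_{n+1}}{h_n}u_n^2+\frac{h_n}{h_{n+1}}u_{n+1}^2 ,
\]
applied edgewise in $\mathcal E_K$, which yields the same inequality directly.
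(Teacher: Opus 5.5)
Your argument is correct, and it takes a genuinely different and shorter route than the paper. The paper first passes to the frozen Jacobi operator $\mathcal H_b^{(0)}$ with constant per-capita rates $\alpha_0,\beta_0$. It shows that $\phi=S_0h$, i.e.\ $\phi_n=\sqrt{\pi_n^{(0)}}\,n$, is a ground state in the operator domain, and uses the ground-state transform \eqref{eq:left-boundary-ground-state-transform} to get $Q_b^{(0)}\ge\gamma_0$. It then derives the weighted $H^1$ bound \eqref{eq:left-boundary-weighted-H1-control} and treats $Q_K$ as a relative form perturbation, $|Q_K(z)-Q_b^{(0)}(z)|\le C_b\,\omega(\delta_\varepsilon)Q_b^{(0)}(z)$, because the Jacobi coefficients differ by $O(\omega(\delta_\varepsilon)n)$ on $J_{b,K}$.

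You apply the same ground-state transform, with the same $h_n=n$, directly to the true chain. The key is the exact identity $(-L_Kh)_n/h_n=(\widetilde\mu-\widetilde\lambda)(n/K)$. Your representation of $\mathcal E_K$ is correct, including the killing edge, which is absorbed because $h_0=0$; the edgewise Cauchy--Schwarz fallback gives the same inequality.

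This route buys several simplifications. It removes the frozen-domain argument, the weighted $H^1$ estimate, the modulus $\omega$ and the constant $C_b$. It uses only continuity of $\widetilde\mu-\widetilde\lambda$ at $0$, and it evaluates rates only at support sites, not at adjacent edges. It also gives the explicit constant $\min_{[0,2\delta_\varepsilon]}(\widetilde\mu-\widetilde\lambda)$ with no multiplicative loss.

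What the paper's route buys is the frozen-model picture itself. It identifies $\gamma_0$ as the spectral bottom of a limiting local operator, parallel to the oscillator limits at $x_1,x_2$. The facts it establishes, namely \eqref{eq:left-boundary-frozen-bottom} and coefficient convergence on fixed supports, are reused for the matching upper bound in Proposition~\ref{prop:left-boundary-limsup-gap}. Your argument yields only the lower bound, so that input would have to be supplied separately, for instance by truncations of $S_Kh$.

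One small point on Step~1: keep only your second choice, $K_\varepsilon\ge\delta_\varepsilon^{-4}$. A threshold depending on $\delta$ would not give the uniformity in $0<\delta\le\delta_\varepsilon$ that the statement requires.
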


\begin{proof}
The frozen killed generator is
\[
(\mathcal L_b^{(0)}u)_n
=\alpha_0n(u_{n+1}-u_n)+\beta_0n(u_{n-1}-u_n),\qquad u_0=0.
\]
Its reversible weights may be taken as
$\pi_n^{(0)}=(\alpha_0/\beta_0)^{n-1}/n$.
Introduce the unitary transformation $(S_0u)_n=\sqrt{\pi_n^{(0)}}u_n$,
set $\mathcal H_b^{(0)}=S_0(-\mathcal L_b^{(0)})S_0^{-1}$,
and denote its closed quadratic form by $Q_b^{(0)}$.
Here we take the Friedrichs realization obtained by closing the
corresponding quadratic form on $c_{00}(\mathbb N^*)$;
see \cite[Theorem~2.13]{Teschl2014}.

The coefficients of the form are
\[
a_n^{(0)}=\sqrt{\alpha_0\beta_0n(n+1)},\qquad
W_n^{(0)}=n(\alpha_0+\beta_0)-a_n^{(0)}-a_{n-1}^{(0)},\quad a_0^{(0)}=0.
\]
The sequence $u_n=n$ solves $-\mathcal L_b^{(0)}u=\gamma_0u$.
Thus
\[
\phi_n:=\sqrt{\pi_n^{(0)}}n
=\sqrt n(\alpha_0/\beta_0)^{(n-1)/2}
\]
satisfies the transformed recurrence. To verify that it belongs to the operator domain, first note that
\[
\sum_{n\ge1}n\phi_n^2=\sum_{n\ge1}n^2r_0^{n-1}<\infty.
\]
Concavity of the square root gives, for each $n\ge1$,
\[
\sqrt{n(n+1)}+\sqrt{n(n-1)}\le2n.
\]
The frozen coefficients therefore satisfy
\[
\begin{aligned}
\sqrt{\alpha_0\beta_0}\,n
&\le a_n^{(0)}\le\sqrt{2\alpha_0\beta_0}\,n,\\
(\sqrt{\beta_0}-\sqrt{\alpha_0})^2n
&\le W_n^{(0)}\le(\alpha_0+\beta_0)n.
\end{aligned}
\]

Let $\phi^{(N)}=\mathbf1_{\{n\le N\}}\phi$. For $M>N$, the coefficient bounds and $(u-v)^2\le2u^2+2v^2$ give
\[
\|\phi^{(M)}-\phi^{(N)}\|_2^2
+Q_b^{(0)}(\phi^{(M)}-\phi^{(N)})
=O\!\left(\sum_{n>N}n\phi_n^2\right)\longrightarrow0
\quad(N\to\infty),
\]
uniformly in $M>N$, with an implied constant depending only on
$\alpha_0$ and $\beta_0$.
The edges crossing the truncation contribute $a_N^{(0)}\phi_{N+1}^2$ on the left and $a_M^{(0)}\phi_M^2$ on the right; both are controlled by the same tail series. Thus $\{\phi^{(N)}\}$ is Cauchy in the form norm. Since its $\ell^2$ limit is $\phi$, we have $\phi\in D(Q_b^{(0)})$.

Write $Q_b^{(0)}(\cdot,\cdot)$ for the associated bilinear form. Testing the recurrence against a finitely supported vector $z$ gives
\[
Q_b^{(0)}(\phi,z)=\gamma_0\langle\phi,z\rangle.
\]
Cauchy--Schwarz in the form norm extends this identity to all $z\in D(Q_b^{(0)})$. The representation theorem for closed quadratic forms
\cite[Theorem~2.14]{Teschl2014} then gives
$\phi\in D(\mathcal H_b^{(0)})$ and
$\mathcal H_b^{(0)}\phi=\gamma_0\phi$.

We can now apply the ground-state transform. For a finitely supported vector $z_n=\phi_nh_n$, the recurrence gives
\[
(n(\alpha_0+\beta_0)-\gamma_0)\phi_n
=a_n^{(0)}\phi_{n+1}+a_{n-1}^{(0)}\phi_{n-1},
\]
where the term with index $n-1$ vanishes at $n=1$. Substituting into the diagonal part of $Q_b^{(0)}(z)-\gamma_0\|z\|_2^2$ and grouping the contributions from adjacent edges, we obtain
\begin{equation}\label{eq:left-boundary-ground-state-transform}
Q_b^{(0)}(z)-\gamma_0\|z\|_2^2
=\sum_{n\ge1}a_n^{(0)}\phi_n\phi_{n+1}(h_{n+1}-h_n)^2\ge0.
\end{equation}
Density in the form norm extends this lower bound to the whole form domain. Taking $\phi$ as a trial vector gives
\begin{equation}\label{eq:left-boundary-frozen-bottom}
\inf_{\substack{z\in D(Q_b^{(0)})\\\|z\|_2=1}}Q_b^{(0)}(z)=\gamma_0.
\end{equation}
The coefficient bounds also give
\begin{equation}\label{eq:left-boundary-weighted-H1-control}
\sum_{n\ge1}n(z_{n+1}-z_n)^2+\sum_{n\ge1}nz_n^2
\le\max\left\{
\frac1{\sqrt{\alpha_0\beta_0}},
\frac1{(\sqrt{\beta_0}-\sqrt{\alpha_0})^2}
\right\}Q_b^{(0)}(z).
\end{equation}

To compare the frozen and original forms, fix $r_*>0$ so that the
per-capita rates are bounded above and bounded away from zero on
$[0,2r_*]$. For $0<r\le r_*$, define
\[
\omega(r)=\sup_{0\le x\le2r}
\bigl(|\widetilde\lambda(x)-\alpha_0|+|\widetilde\mu(x)-\beta_0|\bigr).
\]
Continuity of the rates implies $\omega(r)\to0$ as $r\downarrow0$.

After choosing $\delta_\varepsilon\le r_*$ below, we take $K_\varepsilon$ large enough that
$K^{-1/4}+K^{-1}\le\delta_\varepsilon$ for all $K\ge K_\varepsilon$.
Whenever $\delta\le\delta_\varepsilon$, every site in $J_{b,K}$ and
its right neighbor have density coordinates at most $2\delta_\varepsilon$.
There is a constant $C>0$, depending only on the rate bounds on
$[0,2r_*]$ and on $\alpha_0,\beta_0$, such that
\[
|a_n-a_n^{(0)}|+|W_n-W_n^{(0)}|
\le C\omega(\delta_\varepsilon)n
\]
on all sites and edges contributing to the quadratic form of vectors
supported in $J_{b,K}$. In particular, $C$ is independent of
$\delta$, $\delta_\varepsilon$, $K$, and $z$.

Together with \eqref{eq:left-boundary-weighted-H1-control}, this gives
\[
|Q_K(z)-Q_b^{(0)}(z)|
\le C_b\omega(\delta_\varepsilon)Q_b^{(0)}(z).
\]
Here $C_b>0$ is fixed independently of $\delta$, $\delta_\varepsilon$,
$K$, and $z$. Choose
$\delta_\varepsilon\in(0,\min\{r_*,x_1/2\})$ so that
$C_b\omega(\delta_\varepsilon)\gamma_0\le\varepsilon$. Then
\[
Q_K(z)\ge(1-C_b\omega(\delta_\varepsilon))Q_b^{(0)}(z)
\ge(\gamma_0-\varepsilon)\|z\|_2^2.
\]
Since $\delta_\varepsilon$ is fixed in advance, this same $K_\varepsilon$ works for every $0<\delta\le\delta_\varepsilon$.
\end{proof}

We next apply \resultref{Lemma~}{lem:interior-approximation} to the window around
the unstable threshold.

\begin{lemma}[Window near $x_1$]\label{lem:x1-window-precise}
The principal eigenvalue associated with $Q_{1,K}$,
\[
\lambda_{0,K}^{(1)}:=\inf_{\substack{z\in E_{1,K}\\\|z\|_2=1}}Q_K(z),
\]
converges to $\gamma_1=V'(x_1)$. Consequently, for every
$\varepsilon\in(0,\gamma_1)$ and all sufficiently large $K$,
\[
Q_K(z)\ge(\gamma_1-\varepsilon)\|z\|_2^2,\qquad z\in E_{1,K}.
\]
\end{lemma}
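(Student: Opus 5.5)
The plan is to prove $\lambda^{(1)}_{0,K}\to\gamma_1$ through matching upper and lower bounds, both taken from \resultref{Lemma~}{lem:interior-approximation}. The first step identifies the bottom of the limiting form $q_1$ in \eqref{eq:interior-limit-form}. Since $b_1=\gamma_1>0$, $q_1$ is the form of the operator $-D_1\partial_y^2+\frac{b_1^2}{4D_1}y^2+\frac{b_1}{2}$. The harmonic oscillator $-D\partial_y^2+\omega^2y^2/(4D)$ has ground energy $\omega/2$, so with $\omega=|b_1|$ the lowest eigenvalue of $q_1$ is $|b_1|/2+b_1/2=\gamma_1$. The ground state is the Gaussian $\psi_{1,0}(y)=c\,e^{-|b_1|y^2/(4D_1)}$, and we have $\inf\{q_1(\Psi):\|\Psi\|_{L^2}=1,\ \Psi\in D(q_1)\}=\gamma_1$. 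To see this, the ground-state transform writes $q_1(\psi_{1,0}h)-\gamma_1\|\psi_{1,0}h\|^2=D_1\int\psi_{1,0}^2|h'|^2\ge0$ for nice $h$, and this extends by density to the form domain.

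For the upper bound, fix $R$ and take the trial function $\eta_R\psi_{1,0}\in C_c^\infty(\mathbb R)$. By \eqref{eq:interior-recovery}, the sampled vector $\mathcal S_{1,K}(\eta_R\psi_{1,0})$ is supported in $I_{1,K}\subset J_{1,K}$ for large $K$, so it lies in $E_{1,K}$. Its norm tends to $\|\eta_R\psi_{1,0}\|_{L^2}$ and its energy tends to $q_1(\eta_R\psi_{1,0})$. Hence
\[
\limsup_{K\to\infty}\lambda^{(1)}_{0,K}\le \frac{q_1(\eta_R\psi_{1,0})}{\|\eta_R\psi_{1,0}\|_{L^2}^2}.
\]
Letting $R\to\infty$ and using the Gaussian cutoff convergence $q_1(\eta_R\psi)\to q_1(\psi)$ recorded in the proof of \resultref{Lemma~}{lem:interior-approximation}, the right side tends to $\gamma_1$. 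In particular $\lambda^{(1)}_{0,K}$ is bounded above uniformly for large $K$.

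For the lower bound, I argue by contradiction using compactness. $E_{1,K}$ is finite-dimensional, so the infimum is attained by a unit minimizer $z^{(K)}$. Pass to a subsequence along which $\lambda^{(1)}_{0,K}\to\liminf_K\lambda^{(1)}_{0,K}$. Along it the energies $Q_K(z^{(K)})=\lambda^{(1)}_{0,K}$ are uniformly bounded by the upper bound. The compactness part of \resultref{Lemma~}{lem:interior-approximation} then gives a further subsequence whose interpolations converge to a unit vector $\Psi\in D(q_1)$ with $q_1(\Psi)\le\liminf Q_K(z^{(K)})$. The first step gives $q_1(\Psi)\ge\gamma_1$, so $\liminf_K\lambda^{(1)}_{0,K}\ge\gamma_1$.

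Combining the two bounds gives convergence to $\gamma_1$. The stated coercive estimate then follows directly from the definition of $\lambda^{(1)}_{0,K}$ as the infimum over unit vectors of $E_{1,K}$, together with homogeneity of $Q_K$ in $z$. The one step needing care is the spectral identification of the bottom of $q_1$ on its full form domain $H^1\cap L^2(y^2dy)$. I handle it with the ground-state transform, extended by approximating with $C_c^\infty$ functions, which are dense in that form domain. Everything else is a direct application of the interior approximation lemma.
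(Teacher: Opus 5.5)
Your proof is correct and follows the paper's argument. The upper bound comes from sampling the cut-off Gaussian $\eta_R\psi_{1,0}$ and letting $R\to\infty$. The lower bound applies the compactness and lower-semicontinuity part of Lemma~\ref{lem:interior-approximation} to local minimizers along a subsequence realizing the $\liminf$. The only variation is how you find the bottom of $q_1$: you use a ground-state transform plus density of $C_c^\infty$ in the form domain, whereas the paper rescales to the standard oscillator and quotes its spectrum $\{k\gamma_1:k\ge1\}$; either way works.
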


\begin{proof}
By \eqref{eq:interior-limit-form}, the limiting local operator is
\[
\mathcal H_{1,\mathrm{loc}}
=-D_1\partial_{yy}+\frac{\gamma_1^2}{4D_1}y^2+\frac{\gamma_1}{2}.
\]
The change of variables $\xi=\sqrt{\gamma_1/(2D_1)}y$ puts it in the form
$(\gamma_1/2)(-\partial_{\xi\xi}+\xi^2+1)$.
The standard one-dimensional oscillator has eigenvalues $2j+1$,
$j=0,1,\ldots$; see \cite[Section~8.3, Eqs.~(8.39)--(8.43)]{Teschl2014}.
The scaling and shift above therefore give the spectrum
$\{k\gamma_1:k\ge1\}$. Its normalized ground state is
\[
\psi_{1,0}(y)=\left(\frac{\gamma_1}{2\pi D_1}\right)^{1/4}
e^{-\gamma_1y^2/(4D_1)},\qquad q_1(\psi_{1,0})=\gamma_1.
\]

Using the cutoff $\eta_R$ introduced earlier, set
\begin{equation}\label{cut-off-f}
\psi_{1,0,R}(y)=\eta_R(y)\psi_{1,0}(y).
\end{equation}
Applying \eqref{eq:interior-recovery} and the variational principle for fixed $R$, then letting $R\to\infty$, gives
\[
\limsup_{K\to\infty}\lambda_{0,K}^{(1)}
\le\lim_{R\to\infty}\frac{q_1(\psi_{1,0,R})}{\|\psi_{1,0,R}\|_{L^2}^2}
=\gamma_1.
\]

For the lower bound, take a subsequence along which the local eigenvalues converge to their liminf, and let $g_{1,K}$ be the corresponding normalized ground states. Their energies are bounded, so
\eqref{eq:interior-compactness} and
\eqref{eq:interior-form-liminf} give a normalized limit $G$ satisfying
\[
\liminf_{K\to\infty}\lambda_{0,K}^{(1)}\ge q_1(G)\ge\gamma_1.
\]
The two bounds prove convergence of the eigenvalue and the stated coercive estimate.
\end{proof}

Outside the equilibrium windows, the potential gives a lower bound
in the intermediate region. The far-right tail will be treated
separately below.

\begin{lemma}[Coercivity in the intermediate region]\label{lem:middle-region-coercivity}
For each fixed sufficiently small $\delta>0$, there exists
$c_{\mathrm{mid},\delta}>0$, independent of $K$, such that for
all sufficiently large $K$,
\[
Q_K(\chi_{0,K}\zeta)\ge
c_{\mathrm{mid},\delta}K^{1/2}\|\chi_{0,K}\zeta\|_2^2,
\qquad \zeta\in\ell^2(\mathbb N^*).
\]
\end{lemma}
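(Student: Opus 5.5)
The plan is to drop the gradient term and bound the potential $W_n$ from below pointwise on $J_{0,K}$. Since $a_n\ge0$, we have
\[
Q_K(\chi_{0,K}\zeta)\ge\sum_{n}W_n\,\chi_{0,K}(n)^2\zeta_n^2,
\]
because $\chi_{0,K}\zeta$ is supported in $J_{0,K}$. It therefore suffices to show that $W_n\ge c_{\mathrm{mid},\delta}K^{1/2}$ for every $n\in J_{0,K}$ once $K$ is large. On $J_{0,K}$ the density $x=n/K$ lies in the compact interval $[\delta-K^{-1},1/\delta]$, which sits inside $[\delta/2,2/\delta]$ for large $K$. On that interval $f=x\widetilde\lambda$ and $g=x\widetilde\mu$ are $C^2$ and bounded away from zero. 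So the expansion \eqref{eq:local-potential-expansion} holds uniformly there:
\[
W_n=K\bigl(q(x)-r(x)\bigr)^2+(rq'-qr')(x)+O_\delta(K^{-1}).
\]
The term $(rq'-qr')$ is $O_\delta(1)$ on the interval. This reduces the problem to a lower bound on $(q-r)^2$.

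For that bound, note that $q-r$ vanishes on $[\delta/2,2/\delta]$ only at $x_1$ and $x_2$, by (A2), since $q=r$ exactly when $\widetilde\lambda=\widetilde\mu$. These zeros are simple by transversality: $(q-r)'(x_i)=b_i/(2\sqrt{D_i})\neq0$. Hence there is a constant $c_\delta>0$ with
\[
|q(x)-r(x)|\ge c_\delta\min\{1,|x-x_1|,|x-x_2|\}\qquad\text{on }[\delta/2,2/\delta].
\]
Away from fixed neighborhoods of $x_i$ this follows by compactness, and near $x_i$ it follows from the mean value theorem. For $n\in J_{0,K}$ the definition of the region gives $|n-n_i(K)|\ge\ell_K$, so $|x-x_i|\ge K^{-1/4}-K^{-1}\ge\tfrac12K^{-1/4}$ for large $K$. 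Therefore
\[
K(q-r)^2\ge \tfrac14c_\delta^2\,K\cdot K^{-1/2}=\tfrac14c_\delta^2K^{1/2}.
\]
This dominates the $O_\delta(1)$ remainder, and we may take $c_{\mathrm{mid},\delta}=c_\delta^2/8$.

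The main obstacle is making sure the Taylor expansion \eqref{eq:local-potential-expansion} is valid uniformly on all of $J_{0,K}$, including its edges near $\lfloor\delta K\rfloor$ and $\lfloor K/\delta\rfloor$. The remainder requires bounded second derivatives of $q,r$ on a slightly enlarged compact interval, which holds because that interval stays away from $0$. The condition ``$\delta$ sufficiently small'' is used only to ensure $\delta<x_1$ and $x_2<1/\delta$, so that the sole zeros of $q-r$ in the range are the two equilibria. Nothing in the argument depends on $\zeta$ beyond its support, so the bound holds for all $\zeta\in\ell^2$ for which the form is finite; $\chi_{0,K}$ has finite support, so this is all of $\ell^2$.
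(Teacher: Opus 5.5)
Your proposal is correct and follows essentially the same route as the paper. Like the paper, you drop the nonnegative gradient term and use the Taylor expansion to get $W_n=K(q-r)^2(n/K)+O_\delta(1)$ on $[\delta/2,2/\delta]$, bound $(q-r)^2$ below by a multiple of $\min\{1,|x-x_1|^2,|x-x_2|^2\}$ via transversality, and use $|n/K-x_i|\ge\tfrac12K^{-1/4}$ on $J_{0,K}$. The only cosmetic difference is that you state the lower bound for $|q-r|$ and then square it (hence your constant $c_\delta^2/8$), whereas the paper states it directly for $U=(q-r)^2$ (hence $c_\delta/8$).
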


\begin{proof}
Define
\[
U(x)=x(\sqrt{\widetilde\lambda(x)}-\sqrt{\widetilde\mu(x)})^2.
\]
For $x>0$, the only zeros of $U(x)$ are $x_1$ and $x_2$.
Transversality at the equilibria and strict positivity elsewhere
give a constant $c_\delta>0$ such that, on $[\delta/2,2/\delta]$,
\[
U(x)\ge c_\delta\min\{1,|x-x_1|^2,|x-x_2|^2\}.
\]
By \eqref{eq:local-potential-expansion},
$W_n=KU(n/K)+O_\delta(1)$ uniformly on this interval.
For lattice points $n\in J_{0,K}$,
\[
|n/K-x_i|\ge\ell_K/K-K^{-1}\ge\tfrac12K^{-1/4},\qquad i=1,2,
\]
when $K$ is large. Taking $c_{\mathrm{mid},\delta}:=c_\delta/8$
and using the uniform remainder bound gives
$W_n\ge c_{\mathrm{mid},\delta}K^{1/2}$ on $J_{0,K}$ for all
sufficiently large $K$. The gradient contribution is nonnegative,
so the lemma follows.
\end{proof}

\begin{lemma}[Far-right tail coercivity]\label{lem:right-tail-coercive}
Given any $B_0>0$, there exists $\delta_{B_0}>0$ so that, for
each fixed $\delta\in(0,\delta_{B_0}]$ and all sufficiently large $K$,
\[
Q_K(z)\ge B_0\|z\|_2^2,
\qquad z\in D(Q_K),\quad\operatorname{supp}z\subset J_{r,K}.
\]
In particular, choosing $B_0=\gamma_*$ gives the bound needed for
$z=\chi_{r,K}\zeta$, $\zeta\in D(Q_K)$.
\end{lemma}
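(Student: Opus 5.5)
The plan is to bound the potential $W_n$ from below by a large multiple of $K$ on $J_{r,K}$, and to discard the nonnegative gradient term. Write $x=n/K$. From the coefficient formulas,
\[
W_n=\lambda_n+\mu_n-a_n-a_{n-1},\qquad a_n=\sqrt{\lambda_n\mu_{n+1}},\qquad a_{n-1}=\sqrt{\lambda_{n-1}\mu_n}.
\]
So it suffices to show that, for $n\ge\lfloor K/\delta\rfloor-\ell_K$ (hence $x\ge 1/(2\delta)$ for large $K$), both $a_n/\mu_n$ and $a_{n-1}/\mu_n$ are at most $1/4$. Then $W_n\ge\mu_n/2\ge n\widetilde\mu(0)/2$, and therefore
\[
W_n\ge\frac{K\widetilde\mu(0)}{4\delta}\qquad\text{on }J_{r,K}.
\]

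\textbf{Estimating $a_{n-1}/\mu_n$.} Monotonicity of $\widetilde\mu$ gives $\mu_{n-1}\le\mu_n$. Hence
\[
\frac{a_{n-1}}{\mu_n}\le\left(\frac{\lambda_{n-1}}{\mu_{n-1}}\right)^{1/2}
=\left(\frac{\widetilde\lambda}{\widetilde\mu}\Bigl(\frac{n-1}{K}\Bigr)\right)^{1/2}.
\]

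\textbf{Estimating $a_n/\mu_n$.} Here
\[
\frac{a_n}{\mu_n}=\left(\frac{\widetilde\lambda(x)}{\widetilde\mu(x)}\cdot\frac{\mu_{n+1}}{\mu_n}\right)^{1/2},
\qquad
\frac{\mu_{n+1}}{\mu_n}\le(1+1/n)e^{B/K}\le 4,
\]
where $B$ is the logarithmic derivative bound from (A3), as in Subsection~\ref{subsec:self-adjoint-realization}.

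\textbf{Choosing $\delta$.} By the first limit in (A3), pick $x_*$ with $\widetilde\lambda/\widetilde\mu\le 1/64$ on $[x_*,\infty)$. Then choose $\delta_{B_0}$ so small that $1/(4\delta_{B_0})\ge x_*$, which ensures $(n-1)/K\ge x_*$ on $J_{r,K}$, and also
\[
\frac{\widetilde\mu(0)}{4\delta_{B_0}}\ge B_0+1 .
\]
This makes both ratios at most $1/4$, and gives $W_n\ge K(B_0+1)\ge B_0$ for $K\ge1$ and $\delta\le\delta_{B_0}$.

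\textbf{Conclusion.} For $z\in c_{00}$ supported in $J_{r,K}$,
\[
Q_K(z)\ge\sum_n W_nz_n^2\ge B_0\|z\|_2^2,
\]
since the gradient term $\sum a_n(z_{n+1}-z_n)^2$, including the edge crossing into the support, is nonnegative. For general $z\in D(Q_K)$ with support in $J_{r,K}$, the series representation of $Q_K$ on the full form domain (established in Subsection~\ref{subsec:self-adjoint-realization}) applies directly. So the same termwise argument works, with no approximation needed; alternatively one can truncate by multiplying by a half-line indicator, which preserves the form domain.

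\textbf{Application to $\chi_{r,K}\zeta$.} This vector is supported in $J_{r,K}$ and lies in $D(Q_K)$ by the cutoff stability shown earlier, so the choice $B_0=\gamma_*$ applies.

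The only delicate point is uniformity: all bounds use only (A3), the monotonicity in (A2), and $n\ge K/(2\delta)$, so they are independent of $K$ once $K$ is large enough that $\ell_K\le K/(2\delta)$. No real obstacle is expected beyond checking the ratio $\mu_{n+1}/\mu_n$ and the index shift for $a_{n-1}$.
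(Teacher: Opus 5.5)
Your proof is correct, but it takes a genuinely different route from the paper.

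\textbf{The paper's route.} The paper keeps the kinetic term and proves a weighted Hardy inequality. It passes to $u=S_K^{-1}z$ and uses $u_{m_K-1}=0$ to telescope. The geometric decay $\pi_{n+1}/\pi_n\le\theta_0$ then gives $\sum_{n\ge m_K}\pi_nu_n^2\le(1-\theta_0)^{-1}Q_K(z)\sum_{j\ge m_K}\mu_j^{-1}$. The integrability condition $\int^\infty dx/(x\widetilde\mu(x))<\infty$ in (A3) makes the last factor at most $B_0^{-1}$ once $\delta$ is small. A half-line-indicator density argument finally extends the bound to $D(Q_K)$.

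\textbf{Your route.} You discard the gradient term. You then make the fixed-$K$ observation $W_n/\mu_n\to1$ from Subsection~\ref{subsec:self-adjoint-realization} uniform in $K$ on $\{n/K\ge x_*\}$. This uses the ratio limit in (A3), the logarithmic-derivative bound (to get $\mu_{n+1}/\mu_n\le4$ once $e^{B/K}\le2$), and monotonicity (for $\mu_{n-1}\le\mu_n$). Together they give $W_n\ge\mu_n/2\ge K\widetilde\mu(0)/(4\delta)$ on $J_{r,K}$. I checked each step, including dropping $\lambda_n\ge0$ and the index shift for $a_{n-1}$. Invoking the series representation on the full form domain is also legitimate, since the paper establishes it there.

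\textbf{What your route buys.} It is shorter and needs neither a Hardy inequality nor an approximation argument. It never uses the integral condition in (A3). It also yields coercivity of order $K/\delta$ rather than a fixed constant. Consequently $\delta$ can be fixed independently of $B_0$: you only need $1/(4\delta)\ge x_*$, and the dependence on $B_0$ moves into the threshold for $K$. Your extra requirement $\widetilde\mu(0)/(4\delta_{B_0})\ge B_0+1$ is therefore harmless but superfluous.

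\textbf{What the paper's route buys.} It works entirely with the Dirichlet form and the tail sums of $\pi^{(K)}$ and $1/\mu^{(K)}$. These are the standard Hardy-type quantities for birth--death chains. It avoids the logarithmic-derivative bound and any pointwise analysis of $W_n$. The price is a constant that is merely uniform in $K$ and does not exploit the $O(K)$ size of the potential in the tail.
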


\begin{proof}
Fix $\theta_0=1/2$. By assumption {\rm(A3)}, we may choose
$\delta_{B_0}$ small enough that, for all $\delta\le\delta_{B_0}$,
\begin{equation}\label{eq:right-tail-choice}
\frac{\widetilde\lambda(x)}{\widetilde\mu(x)}\le\theta_0
\quad(x\ge1/(2\delta)),\qquad
\frac1{1-\theta_0}\int_{1/(2\delta)}^\infty
\frac{dx}{x\widetilde\mu(x)}\le B_0^{-1}.
\end{equation}

For fixed $\delta$, set
\[
m_K:=\left\lceil\lfloor K/\delta\rfloor-\ell_K\right\rceil.
\]
For large $K$, $m_K\ge2$ and $(m_K-1)/K\ge1/(2\delta)$.

First let $z$ be finitely supported in $J_{r,K}$, and set
$u=S_K^{-1}z$. Then $u_n=0$ whenever $n<m_K$, so in particular
$u_{m_K-1}=0$. The killing term vanishes and the quadratic form becomes
\begin{equation}\label{eq:right-tail-dirichlet-form}
Q_K(z)=\sum_{k\ge m_K-1}\lambda_k\pi_k(u_{k+1}-u_k)^2.
\end{equation}
This sum includes the edge crossing the cutoff at index $k=m_K-1$.

For $n\ge m_K$, telescoping together with Cauchy--Schwarz gives
\[
u_n=\sum_{k=m_K-1}^{n-1}(u_{k+1}-u_k),\qquad
u_n^2\le Q_K(z)\sum_{k=m_K-1}^{n-1}\frac1{\lambda_k\pi_k}.
\]
Multiplying by $\pi_n$, summing over $n$, and interchanging the nonnegative sums gives
\[
\sum_{n\ge m_K}\pi_nu_n^2
\le Q_K(z)\sum_{k\ge m_K-1}\frac1{\lambda_k\pi_k}
\sum_{n\ge k+1}\pi_n.
\]

Monotonicity of $\widetilde\mu$ and condition \eqref{eq:right-tail-choice} yield
\[
\frac{\pi_{n+1}}{\pi_n}=\frac{\lambda_n}{\mu_{n+1}}\le\theta_0,
\qquad n\ge m_K-1.
\]
Hence $\sum_{n\ge k+1}\pi_n\le\pi_{k+1}/(1-\theta_0)$.
Detailed balance then gives
\begin{equation}\label{eq:right-tail-poincare}
\sum_{n\ge m_K}\pi_nu_n^2
\le\frac{Q_K(z)}{1-\theta_0}\sum_{j\ge m_K}\frac1{\mu_j}.
\end{equation}

Because $x\mapsto1/(x\widetilde\mu(x))$ is decreasing,
\[
\sum_{j\ge m_K}\frac1{\mu_j}
=\frac1K\sum_{j\ge m_K}\frac1{(j/K)\widetilde\mu(j/K)}
\le\int_{(m_K-1)/K}^\infty\frac{dx}{x\widetilde\mu(x)}
\le\int_{1/(2\delta)}^\infty\frac{dx}{x\widetilde\mu(x)}.
\]

Combining \eqref{eq:right-tail-choice}--\eqref{eq:right-tail-poincare} gives $\|z\|_2^2\le B_0^{-1}Q_K(z)$.

For an arbitrary $z\in D(Q_K)$ with this support property, choose $z^{(j)}\in c_{00}$ converging to $z$ in the form norm. Let $P_{m_K}$ denote multiplication by the indicator $\mathbf1_{\{n\ge m_K\}}$. Since $P_{m_K}$ has only one jump, across the edge $m_K-1$, the multiplier estimate from Section~\ref{sec:self-adjoint} gives
\[
P_{m_K}z^{(j)}\longrightarrow P_{m_K}z=z\quad\hbox{in the form norm}.
\]
The inequality above holds for each finitely supported $P_{m_K}z^{(j)}$. Continuity of the norm and energy under form-norm convergence then gives $Q_K(z)\ge B_0\|z\|_2^2$.
\end{proof}

It remains to analyze the window around the positive stable equilibrium.
Here the limiting ground-state energy is zero, so we also need the
second local eigenvalue and a comparison of the local and global
ground states.

\begin{lemma}[Local spectrum near $x_2$ and global ground-state localization]\label{lem:x2-window-kernel-gap}
Let $g_{2,K}>0$ on $J_{2,K}$ denote the normalized principal eigenvector of $Q_{2,K}$, extended by zero outside $J_{2,K}$. Write $\lambda_{0,K}^{(2)}$ and
$\lambda_{1,K}^{(2)}$ for its first two local eigenvalues. Then
\begin{enumerate}
\item[\emph{(i)}] $Q_K(g_{2,K})=\lambda_{0,K}^{(2)}\to0$.
In particular, $\rho_1(K)\to0$.
\item[\emph{(ii)}] $\lambda_{1,K}^{(2)}\to\gamma_2$.
Consequently, for every $\varepsilon\in(0,\gamma_2)$ and all sufficiently large $K$,
\[
Q_K(z)\ge(\gamma_2-\varepsilon)\|z\|_2^2,
\qquad z\in E_{2,K},\quad z\perp g_{2,K}.
\]
\item[\emph{(iii)}] The normalized global principal eigenvector is
localized in the stable window, in the sense that
\[
\sum_{\sigma\ne2}\|\chi_{\sigma,K}v_{1,K}\|_2^2\to0,
\quad \|\chi_{2,K}v_{1,K}\|_2\to1,
\quad Q_K(\chi_{2,K}v_{1,K})\to0,
\]
and
\[
\left\|g_{2,K}-
\frac{\chi_{2,K}v_{1,K}}{\|\chi_{2,K}v_{1,K}\|_2}\right\|_2\to0.
\]
In particular,
$\|g_{2,K}-\chi_{2,K}v_{1,K}\|_2\to0$ and
$\|g_{2,K}-v_{1,K}\|_2\to0$.

More precisely, for each fixed sufficiently small $\delta>0$,
\begin{equation}\label{eq:x2-quantitative-localization}
\begin{aligned}
&\sum_{\sigma\ne2}\|\chi_{\sigma,K}v_{1,K}\|_2^2
 +\|v_{1,K}-g_{2,K}\|_2^2
 +\|\chi_{2,K}g_{2,K}-v_{1,K}\|_2^2\\
&\hspace{25mm}=O_\delta\bigl(\rho_1(K)+K^{-1/2}\bigr).
\end{aligned}
\end{equation}
As a consequence,
\begin{equation}\label{eq:x2-uniform-orthogonality}
\sup_{\substack{z\in\ell^2(\mathbb N^*),\ \|z\|_2=1\\
                 z\perp v_{1,K}}}
\bigl|\langle\chi_{2,K}z,g_{2,K}\rangle\bigr|^2
=O_\delta\bigl(\rho_1(K)+K^{-1/2}\bigr).
\end{equation}
\end{enumerate}
\end{lemma}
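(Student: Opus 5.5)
For (i) and (ii) I would argue exactly as in \resultref{Lemma~}{lem:x1-window-precise}, now with the stable oscillator $q_2$. The change of variables $\xi=\sqrt{\gamma_2/(2D_2)}\,y$ turns $q_2$ into $(\gamma_2/2)(-\partial_{\xi\xi}+\xi^2-1)$. Its spectrum is therefore $\{k\gamma_2:k\ge0\}$, with Gaussian ground state $\psi_{2,0}$ and first excited state $\psi_{2,1}=c\,y\,\psi_{2,0}$.

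\textbf{Upper bounds.} Sampling $\eta_R\psi_{2,0}$ and using \eqref{eq:interior-recovery} gives $\limsup\lambda^{(2)}_{0,K}\le0$. Since $\lambda^{(2)}_{0,K}\ge\rho_1(K)>0$ (the window is a subspace of $D(Q_K)$), this yields $\lambda^{(2)}_{0,K}\to0$ and hence $\rho_1(K)\to0$. For the second eigenvalue, apply the min--max principle to the two-dimensional span of the sampled vectors $\mathcal S_{2,K}(\eta_R\psi_{2,0})$ and $\mathcal S_{2,K}(\eta_R\psi_{2,1})$. Their Gram matrix and energy matrix converge by polarization of \eqref{eq:interior-recovery}, so $\limsup\lambda^{(2)}_{1,K}\le\gamma_2$.

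\textbf{Lower bound for the second eigenvalue.} Take normalized local eigenvectors $g_{2,K}$ and $h_K\perp g_{2,K}$ along a subsequence realizing the liminf. By \resultref{Lemma~}{lem:interior-approximation}, their interpolants converge to unit limits $G$ and $H$, and $H\perp G$ because orthogonality passes to the limit. Lower semicontinuity gives $q_2(G)\le\liminf\lambda^{(2)}_{0,K}=0$, so $G=\pm\psi_{2,0}$. Then $q_2(H)\ge\gamma_2$, and therefore $\liminf\lambda^{(2)}_{1,K}\ge\gamma_2$. The coercive estimate in (ii) follows from the spectral theorem on the finite-dimensional space $E_{2,K}$.

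\textbf{Part (iii).} Apply \eqref{eq:discrete-IMS} to $\zeta=v_{1,K}$:
\[
\rho_1(K)\ge\sum_\sigma Q_K(\chi_{\sigma,K}v_{1,K})-C_{\mathrm{IMS},\delta}K^{-1/2}.
\]
Choose $\delta$ small enough that \resultref{Lemmas~}{lem:left-boundary-coercive} and~\resultref{}{lem:right-tail-coercive} apply with some fixed positive constant, for instance $\gamma_*/2$. Together with \resultref{Lemmas~}{lem:middle-region-coercivity} and~\resultref{}{lem:x1-window-precise}, each $\sigma\ne2$ term satisfies $Q_K(\chi_{\sigma,K}v_{1,K})\ge c\|\chi_{\sigma,K}v_{1,K}\|^2$, and $Q_K(\chi_{2,K}v_{1,K})\ge0$. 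Hence
\[
\sum_{\sigma\ne2}\|\chi_{\sigma,K}v_{1,K}\|^2+Q_K(\chi_{2,K}v_{1,K})=O_\delta\bigl(\rho_1+K^{-1/2}\bigr).
\]
Write $w=\chi_{2,K}v_{1,K}\in E_{2,K}$ and decompose $w=\langle w,g\rangle g+w^\perp$ with $g=g_{2,K}$. By (ii), $Q_K(w)\ge\lambda^{(2)}_{0,K}|\langle w,g\rangle|^2+(\gamma_2/2)\|w^\perp\|^2$, so $\|w^\perp\|^2=O_\delta(\rho_1+K^{-1/2})$.

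\textbf{Closing the estimate.} Since $\chi_2^2+\sum_{\sigma\ne2}\chi_\sigma^2=1$, we have $\|w\|^2=1-O_\delta(\rho_1+K^{-1/2})$. I expect the main obstacle to be passing from $w$ back to $v_{1,K}$, because $\chi_{2,K}$ is not an indicator and $\|v_{1,K}-w\|^2=\sum_n(1-\chi_2)^2v_1^2$. The way around it is the pointwise bound $(1-\chi_2)^2\le1-\chi_2^2=\sum_{\sigma\ne2}\chi_\sigma^2$, which holds since $0\le\chi_2\le1$; it gives $\|v_{1,K}-w\|^2=O_\delta(\rho_1+K^{-1/2})$. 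Both $v_{1,K}$ and $g$ are positive, so $\langle w,g\rangle\ge0$, and then $\langle w,g\rangle=\sqrt{\|w\|^2-\|w^\perp\|^2}=1-O_\delta(\rho_1+K^{-1/2})$. This gives $\|v_{1,K}-g\|^2=O_\delta(\rho_1+K^{-1/2})$ and, combined with the previous bound, the normalized comparison. For the remaining term, $\|\chi_{2,K}g-v_{1,K}\|\le\|\chi_{2,K}(g-v_{1,K})\|+\|w-v_{1,K}\|$, which establishes \eqref{eq:x2-quantitative-localization}.

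\textbf{Uniform orthogonality.} For a unit vector $z\perp v_{1,K}$, write $\langle\chi_{2,K}z,g\rangle=\langle z,\chi_{2,K}g\rangle=\langle z,\chi_{2,K}g-v_{1,K}\rangle$. Cauchy--Schwarz then gives \eqref{eq:x2-uniform-orthogonality}.
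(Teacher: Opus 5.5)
Your proposal is correct and follows essentially the same route as the paper. The steps match: oscillator limits via \resultref{Lemma~}{lem:interior-approximation}, IMS applied to $v_{1,K}$, decomposition of $\chi_{2,K}v_{1,K}$ along $g_{2,K}$ with positivity fixing the sign, the bound $(1-\chi_{2,K})^2\le1-\chi_{2,K}^2$, and Cauchy--Schwarz against $\chi_{2,K}g_{2,K}-v_{1,K}$. The one minor deviation is the upper bound for $\lambda_{1,K}^{(2)}$: you use min--max on the span of the two sampled modes, whereas the paper projects the sampled excited state off $g_{2,K}$, using $\mathcal I_{2,K}g_{2,K}\to\psi_{2,0}$ and parity, and both arguments work.
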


\begin{proof}
\emph{(i)} The limiting form $q_2$ from \eqref{eq:interior-limit-form} corresponds to
\[
\mathcal H_{2,\mathrm{loc}}
=-D_2\partial_{yy}+\frac{\gamma_2^2}{4D_2}y^2-\frac{\gamma_2}{2}.
\]
Its spectrum is $\{k\gamma_2:k\ge0\}$, and its normalized ground state is
\begin{equation}\label{eq:x2-limit-ground-state}
\psi_{2,0}(y)=\left(\frac{\gamma_2}{2\pi D_2}\right)^{1/4}
e^{-\gamma_2y^2/(4D_2)},\qquad q_2(\psi_{2,0})=0.
\end{equation}

Define $\psi_{2,0,R}(y)=\eta_R(y)\psi_{2,0}(y)$ using the cutoff from \eqref{cut-off-f}. Equation~\eqref{eq:interior-recovery} and nonnegativity of $Q_K$ give
\[
0\le\limsup_{K\to\infty}\lambda_{0,K}^{(2)}
\le\frac{q_2(\psi_{2,0,R})}{\|\psi_{2,0,R}\|_{L^2}^2}\longrightarrow0
\quad(R\to\infty).
\]
Hence
\begin{equation}\label{eq:x2-local-ground-energy-small}
0\le\rho_1(K)\le\lambda_{0,K}^{(2)}=Q_K(g_{2,K})\longrightarrow0.
\end{equation}
Irreducibility of the finite-dimensional Jacobi matrix yields strict positivity of $g_{2,K}$. From
\eqref{eq:interior-compactness} and
\eqref{eq:interior-form-liminf}, every subsequential limit of its interpolation is a nonnegative unit vector lying in $\ker\mathcal H_{2,\mathrm{loc}}$. Since this kernel is one-dimensional and spanned by $\psi_{2,0}$, the whole sequence converges:
\begin{equation}\label{eq:x2-local-ground-state-convergence}
\mathcal I_{2,K}g_{2,K}\longrightarrow\psi_{2,0}
\quad\hbox{in }L^2(\mathbb R).
\end{equation}

\emph{(ii)} The normalized first excited state is
\[
\psi_{2,1}(y)=\sqrt{\frac{\gamma_2}{D_2}}\,y\psi_{2,0}(y).
\]
It is orthogonal to $\psi_{2,0}$ and has eigenvalue $\gamma_2$.

For fixed $R$, sample the cutoff version
$\psi_{2,1,R}=\eta_R\psi_{2,1}$ and set
\[
z_{K,R}=\mathcal S_{2,K}\psi_{2,1,R},\qquad
\theta_{K,R}=\langle z_{K,R},g_{2,K}\rangle.
\]
Equation~\eqref{eq:x2-local-ground-state-convergence} and the interpolation identities imply $\theta_{K,R}\to0$, since $\psi_{2,1,R}$ is odd and $\psi_{2,0}$ is even.

Both $z_{K,R}$ and $g_{2,K}$ belong to $E_{2,K}$, and the local weak eigenvalue equation is
\[
Q_K(g_{2,K},z)=\lambda_{0,K}^{(2)}\langle g_{2,K},z\rangle,
\qquad z\in E_{2,K}.
\]
This equation holds on the local subspace; the zero extension of $g_{2,K}$ need not be a global eigenvector.

The projected vector $z_{K,R}-\theta_{K,R}g_{2,K}$ is orthogonal to $g_{2,K}$ and satisfies
\[
\begin{aligned}
Q_K(z_{K,R}-\theta_{K,R}g_{2,K})
 &=Q_K(z_{K,R})-\theta_{K,R}^2\lambda_{0,K}^{(2)},\\
\|z_{K,R}-\theta_{K,R}g_{2,K}\|_2^2
 &=\|z_{K,R}\|_2^2-\theta_{K,R}^2.
\end{aligned}
\]
For each sufficiently large fixed $R$, the squared norm converges to $\|\psi_{2,1,R}\|_{L^2}^2>0$. The local variational principle then gives
\[
\limsup_{K\to\infty}\lambda_{1,K}^{(2)}
\le\frac{q_2(\psi_{2,1,R})}{\|\psi_{2,1,R}\|_{L^2}^2}.
\]
Letting $R\to\infty$ and using the Gaussian cutoff estimates from Subsection~\ref{subsec:interior-approximation} gives
$\limsup_K\lambda_{1,K}^{(2)}\le\gamma_2$.

For the lower bound, take a subsequence along which the second local
eigenvalues converge to their $\liminf$, and choose corresponding
normalized eigenvectors $w_{2,K}\perp g_{2,K}$. The upper bound controls
their energies, so \eqref{eq:interior-compactness} gives a normalized
limit $W$ along a further subsequence. Equation~\eqref{eq:x2-local-ground-state-convergence}
and the convergence of inner products proved above imply
\[
\langle W,\psi_{2,0}\rangle_{L^2}
=\lim_{K\to\infty}\langle w_{2,K},g_{2,K}\rangle=0.
\]
The lower-semicontinuity estimate \eqref{eq:interior-form-liminf} and the spectral gap of the limiting oscillator give
\[
\liminf_{K\to\infty}\lambda_{1,K}^{(2)}\ge q_2(W)\ge\gamma_2.
\]
This completes the proof of (ii). The coercive estimate follows by restricting the variational minimization to the orthogonal complement of the local ground state.

\emph{(iii)} Fix $\delta>0$ small enough that the boundary estimate
applies with $\varepsilon=\gamma_0/2$ and the tail estimate applies
with $B_0=\gamma_*$. For all sufficiently large $K$, the local forms
outside the stable window have the common lower bound
$(\gamma_*/2)\|z\|_2^2$. The discrete IMS inequality
\eqref{eq:discrete-IMS} therefore gives
\[
\frac{\gamma_*}{2}\sum_{\sigma\ne2}\|\chi_{\sigma,K}v_{1,K}\|_2^2
   +Q_K(\chi_{2,K}v_{1,K})
\le E_K:=\rho_1(K)+C_{\mathrm{IMS},\delta}K^{-1/2},
\]
where $C_{\mathrm{IMS},\delta}$ is the chosen constant in
\eqref{eq:discrete-IMS}. Every term on the left-hand side is
nonnegative, and $E_K\to0$ by
\eqref{eq:x2-local-ground-energy-small}. The quadratic partition of unity gives
\[
s_K^2:=\|\chi_{2,K}v_{1,K}\|_2^2
=1-\sum_{\sigma\ne2}\|\chi_{\sigma,K}v_{1,K}\|_2^2
\ge1-\frac{2}{\gamma_*}E_K.
\]
In particular, $s_K^2\ge1/2$ for all sufficiently large $K$.

Set $u_K=\chi_{2,K}v_{1,K}/s_K$ and decompose
\[
u_K=\alpha_Kg_{2,K}+h_K,\qquad h_K\perp g_{2,K}.
\]
The local eigenvalue equation and part (ii) give
\[
\frac{\gamma_2}{2}\|h_K\|_2^2
\le\alpha_K^2\lambda_{0,K}^{(2)}+Q_K(h_K)
=Q_K(u_K)
=s_K^{-2}Q_K(\chi_{2,K}v_{1,K})\le2E_K.
\]
Positivity of the ground states and nonnegativity of the cutoff give
$\alpha_K=\langle u_K,g_{2,K}\rangle\ge0$. The normalization $\alpha_K^2+\|h_K\|_2^2=1$ then yields
\[
\|u_K-g_{2,K}\|_2^2
=2(1-\alpha_K)
=\frac{2\|h_K\|_2^2}{1+\alpha_K}
\le\frac{8}{\gamma_2}E_K.
\]
Moreover, $0\le\chi_{2,K}\le1$ and $0<s_K\le1$, so
\[
\|(1-\chi_{2,K})v_{1,K}\|_2^2\le1-s_K^2\le\frac{2}{\gamma_*}E_K,
\qquad (1-s_K)^2\le1-s_K^2\le\frac{2}{\gamma_*}E_K.
\]

The decomposition
\[
v_{1,K}-g_{2,K}
=(1-\chi_{2,K})v_{1,K}+(s_K-1)u_K+(u_K-g_{2,K}),
\]
together with the triangle and Cauchy--Schwarz inequalities, gives
$\|v_{1,K}-g_{2,K}\|_2^2=O_\delta(E_K)$.

It follows also that
\[
\begin{aligned}
\|\chi_{2,K}g_{2,K}-v_{1,K}\|_2^2
&=\|\chi_{2,K}(g_{2,K}-v_{1,K})-(1-\chi_{2,K})v_{1,K}\|_2^2\\
&\le2\|g_{2,K}-v_{1,K}\|_2^2
   +2\|(1-\chi_{2,K})v_{1,K}\|_2^2
=O_\delta(E_K).
\end{aligned}
\]
Since $E_K=O_\delta(\rho_1(K)+K^{-1/2})$, these estimates give
\eqref{eq:x2-quantitative-localization}.

For any unit vector $z\perp v_{1,K}$, multiplication by $\chi_{2,K}$ is self-adjoint, so
\[
\bigl|\langle\chi_{2,K}z,g_{2,K}\rangle\bigr|
=\bigl|\langle z,\chi_{2,K}g_{2,K}-v_{1,K}\rangle\bigr|
\le\|\chi_{2,K}g_{2,K}-v_{1,K}\|_2.
\]
This establishes \eqref{eq:x2-uniform-orthogonality}. The bounds for
$E_K$, $s_K$, $Q_K(\chi_{2,K}v_{1,K})$, and
$\|u_K-g_{2,K}\|_2$ also imply all convergence statements in item (iii).
\end{proof}
\section{Proof of the spectral-gap limit}
\label{sec:gap-proof}
\subsection{Lower bound}
\label{sec:lower-bound}
The local estimates give a lower bound once the component along the
stable local ground state has been controlled. The orthogonality
estimate from the preceding section provides this control uniformly
over the unit sphere in $v_{1,K}^{\perp}$.

\begin{proof}[Proof of \resultref{Proposition~}{prop:spectral-gap-liminf-lower}]
Fix $0<\varepsilon<\gamma_*$, and put $\varepsilon_1=\varepsilon/3$.
Choose $\delta>0$ sufficiently small so that
\resultref{Lemma~}{lem:left-boundary-coercive} holds with error $\varepsilon_1$ and
\resultref{Lemma~}{lem:right-tail-coercive} holds with $B_0=\gamma_*$.
For large $K$, coercive bounds hold on the boundary layer, threshold
window, intermediate region, and far-right tail:
\begin{equation}\label{eq:global-other-regions}
Q_K(\chi_{\sigma,K}z)\ge(\gamma_*-\varepsilon_1)\|\chi_{\sigma,K}z\|_2^2,
\qquad\sigma\in\{b,0,1,r\}.
\end{equation}

Let $z\in D(Q_K)$, $z\perp v_{1,K}$, and $\|z\|_2=1$.
The quantitative estimate \eqref{eq:x2-quantitative-localization} gives
\[
e_K:=\|\chi_{2,K}g_{2,K}-v_{1,K}\|_2,
\qquad e_K^2=O_\delta\bigl(\rho_1(K)+K^{-1/2}\bigr)=o(1).
\]
Set $b_K(z):=\langle\chi_{2,K}z,g_{2,K}\rangle$.
Self-adjointness of the cutoff multiplication and orthogonality $z\perp v_{1,K}$ give
\begin{equation}\label{eq:global-bk-small}
|b_K(z)|=|\langle\chi_{2,K}z,g_{2,K}\rangle|
=|\langle z,\chi_{2,K}g_{2,K}-v_{1,K}\rangle|\le e_K.
\end{equation}
This bound holds uniformly for all such unit vectors $z$.

Decompose
\[
\chi_{2,K}z=b_K(z)g_{2,K}+r_K(z),\qquad r_K(z)\perp g_{2,K}.
\]
Since $g_{2,K}$ is a local eigenvector and $\lambda_{0,K}^{(2)}\ge0$,
\resultref{Lemma~}{lem:x2-window-kernel-gap}(ii) gives
\begin{equation}\label{eq:global-x2-bound}
\begin{aligned}
Q_K(\chi_{2,K}z)
&=b_K(z)^2\lambda_{0,K}^{(2)}+Q_K(r_K(z))\\
&\ge(\gamma_2-\varepsilon_1)(\|\chi_{2,K}z\|_2^2-b_K(z)^2)\\
&\ge(\gamma_*-\varepsilon_1)\|\chi_{2,K}z\|_2^2-\gamma_2e_K^2.
\end{aligned}
\end{equation}

Combining \eqref{eq:global-other-regions},
\eqref{eq:global-x2-bound}, and the IMS estimate
\eqref{eq:discrete-IMS} gives
\[
Q_K(z)\ge\gamma_*-\varepsilon_1-\gamma_2e_K^2
-C_{\mathrm{IMS},\delta}K^{-1/2},
\]
where we used $\sum_\sigma\|\chi_{\sigma,K}z\|_2^2=1$.
The error $\gamma_2e_K^2+C_{\mathrm{IMS},\delta}K^{-1/2}$ is
$O_\delta(\rho_1(K)+K^{-1/2})=o(1)$ uniformly over the constrained
unit sphere, and is therefore at most
$\varepsilon_1$ for sufficiently large $K$. Homogeneity of
the quadratic form gives the stated form bound, and the variational
principle yields
$\rho_2(K)\ge\gamma_*-2\varepsilon/3$.
Since $\rho_1(K)\to0$, by taking $K$ even larger if needed we obtain
$\rho_2(K)-\rho_1(K)\ge\gamma_*-\varepsilon$.
\end{proof}

\subsection{Upper bound and conclusion}
\label{sec:upper-bound}

For the upper bounds, we project local trial states onto the orthogonal complement of the global ground state.
Given a normalized $z_K\in D(Q_K)$, set
$\theta_K=\langle z_K,v_{1,K}\rangle$ and
$\widetilde z_K=z_K-\theta_Kv_{1,K}$.
The weak eigenvalue identity $Q_K(v_{1,K},z)=\rho_1(K)\langle v_{1,K},z\rangle$ holds for every $z\in D(Q_K)$. Expanding this bilinear form gives
\begin{equation}\label{eq:trial-projection}
\|\widetilde z_K\|_2^2=1-\theta_K^2,\qquad
Q_K(\widetilde z_K)=Q_K(z_K)-\theta_K^2\rho_1(K).
\end{equation}
If $\theta_K\to0$, then $\widetilde z_K$ is nonzero for all sufficiently large $K$ and is an admissible trial vector for the variational problem on $v_{1,K}^{\perp}$.
Hence
\begin{equation}\label{eq:trial-variational-bound}
\rho_2(K)\le
\frac{Q_K(z_K)-\theta_K^2\rho_1(K)}{1-\theta_K^2}.
\end{equation}

\Needspace{6\baselineskip}
We first obtain the boundary upper bound using finitely supported
trial vectors.

\begin{proof}[Proof of \resultref{Proposition~}{prop:left-boundary-limsup-gap}]
By \eqref{eq:left-boundary-frozen-bottom} and the form-core property
of finitely supported vectors, for every $\varepsilon_1>0$ there
exists a finitely supported unit vector $z$ such that
\[
Q_b^{(0)}(z)\le \gamma_0+\varepsilon_1.
\]
On the fixed support of $z$ and its adjacent edges, the coefficients
of $Q_K$ converge to those of the frozen boundary form. Thus
\[
Q_K(z)\longrightarrow Q_b^{(0)}(z).
\]
For sufficiently large $K$, this support is disjoint from $J_{2,K}$,
so $z\perp g_{2,K}$. By
\resultref{Lemma~}{lem:x2-window-kernel-gap}(iii),
\[
|\langle z,v_{1,K}\rangle|
=
|\langle z,v_{1,K}-g_{2,K}\rangle|
\le \|v_{1,K}-g_{2,K}\|_2
\longrightarrow0.
\]
The variational estimate \eqref{eq:trial-variational-bound} therefore
gives
\[
\limsup_{K\to\infty}\rho_2(K)
\le Q_b^{(0)}(z)
\le \gamma_0+\varepsilon_1.
\]
Letting $\varepsilon_1\downarrow0$ and using $\rho_1(K)\to0$
proves the proposition.
\end{proof}

For the interior upper bounds, we use the lowest local mode near
the unstable threshold and the first excited local mode near the
stable equilibrium.

\begin{proof}[Proof of \resultref{Proposition~}{prop:spectral-gap-upper-bound-interior}]
Let $g_{1,K}$ and $w_{2,K}$ be normalized eigenvectors of the
operators associated with $Q_{1,K}$ and $Q_{2,K}$, corresponding
to $\lambda_{0,K}^{(1)}$ and $\lambda_{1,K}^{(2)}$, respectively.
By \resultref{Lemma~}{lem:x1-window-precise} and
\resultref{Lemma~}{lem:x2-window-kernel-gap}(ii),
\[
Q_K(g_{1,K})=\lambda_{0,K}^{(1)}\longrightarrow\gamma_1,
\qquad
Q_K(w_{2,K})=\lambda_{1,K}^{(2)}\longrightarrow\gamma_2.
\]
For sufficiently large $K$, the windows $J_{1,K}$ and $J_{2,K}$
are disjoint, so $g_{1,K}\perp g_{2,K}$. Moreover,
$w_{2,K}\perp g_{2,K}$ by orthogonality of the local eigenvectors.
Hence, for either $u=g_{1,K}$ or $u=w_{2,K}$,
\[
|\langle u,v_{1,K}\rangle|
=
|\langle u,v_{1,K}-g_{2,K}\rangle|
\le \|v_{1,K}-g_{2,K}\|_2
\longrightarrow0,
\]
where the convergence follows from
\resultref{Lemma~}{lem:x2-window-kernel-gap}(iii).
Applying \eqref{eq:trial-variational-bound} to these two trial
vectors yields
\[
\limsup_{K\to\infty}\rho_2(K)
\le \min\{\gamma_1,\gamma_2\}.
\]
Since $\rho_1(K)\to0$, the proposition follows.
\end{proof}

Combining \resultref{Proposition~}{prop:left-boundary-limsup-gap}
with \resultref{Proposition~}{prop:spectral-gap-upper-bound-interior},
we obtain
\[
\limsup_{K\to\infty}
\bigl(\rho_2(K)-\rho_1(K)\bigr)
\le \min\{\gamma_0,\gamma_1,\gamma_2\}
=\gamma_*.
\]
Together with the lower bound in
\resultref{Proposition~}{prop:spectral-gap-liminf-lower}, this proves
\resultref{Theorem~}{thm:spectral-gap-limit}.

\section{Consequences, examples, and discussion}
\label{sec:consequences}
\subsection{The Q-process and separation of time scales}
\label{subsec:q-process-consequences}

For fixed $K$, assumption {\rm(A3)} gives
$\pi_{n+1}^{(K)}/\pi_n^{(K)}\to0$ and
$\sum_{n\ge1}1/\mu_n^{(K)}<\infty$ by the integral test.
The tails of $\pi^{(K)}$ are therefore bounded by a geometric series, and
\[
\sum_{n\ge1}\frac1{\mu_n^{(K)}\pi_n^{(K)}}
\sum_{j\ge n}\pi_j^{(K)}<\infty.
\]
This verifies the birth--death criterion in
\cite[Theorem~4.1, Eq.~(4.1)]{Champagnat-Villemonais2016}, with zero catastrophe rates.
The positive bounded eigenfunction of the killed semigroup given by
\cite[Proposition~2.3]{Champagnat-Villemonais2016} belongs to
$\ell^2(\pi^{(K)})$, since $\sum_n\pi_n^{(K)}<\infty$.
Positivity and simplicity identify it, up to normalization, with
$\varphi_{1,K}$, and its decay rate with $\rho_1(K)$.
Thus \cite[Theorem~3.1(i)--(iii)]{Champagnat-Villemonais2016}
gives the $Q$-process as the limit of conditioning on survival to a
distant time, with the Doob semigroup below. Its invariant law is
proportional to $\varphi_{1,K}\nu^{(K)}$ and therefore equals
$m^{(K)}$, by the formula for $\nu^{(K)}$ given below.

To prove \resultref{Corollary~}{cor:q-process-gap}, let $Z_K$ denote the
normalization constant defined in Subsection~\ref{subsec:main-results}, and define
\[
\mathcal T_K:L^2(m^{(K)})\longrightarrow\ell^2(\pi^{(K)}),
\quad (\mathcal T_Kf)_n=Z_K^{-1/2}\varphi_{1,K}(n)f_n.
\]
This map is unitary, and the Doob-transformed generator is related to
the killed generator by
\[
D(\mathcal L_K^Q)=\mathcal T_K^{-1}D(L_K),\qquad
\mathcal T_K(-\mathcal L_K^Q)\mathcal T_K^{-1}=-L_K-\rho_1(K).
\]
The Dirichlet-form domain of the killed generator in
$\ell^2(\pi^{(K)})$ is carried by $\mathcal T_K^{-1}$ onto the form domain of the conditioned generator. For bounded $f$, the transformed semigroup is given by
\[
(P_t^{Q,K}f)(n)
=\frac{e^{\rho_1(K)t}}{\varphi_{1,K}(n)}
\bigl(e^{tL_K}(\varphi_{1,K}f)\bigr)(n).
\]
Since $\varphi_{1,K}f\in\ell^2(\pi^{(K)})$, the expression is well-defined.
This is the representation in
\cite[Theorem~3.1(ii), Eq.~(3.1)]{Champagnat-Villemonais2016}.
Unitary equivalence with the shifted killed generator also shows that
$P_t^{Q,K}$ is self-adjoint on $L^2(m^{(K)})$.

The same unitary equivalence gives
\[
\operatorname{Spec}(-\mathcal L_K^Q)
=\{0,\rho_2(K)-\rho_1(K),\rho_3(K)-\rho_1(K),\ldots\},
\]
where constant functions span the eigenspace at zero.
Let $\mathcal E_K^Q$ denote the Dirichlet form of
$-\mathcal L_K^Q$. For $g\in L^2(m^{(K)})$, write
\[
m^{(K)}(g):=\sum_{n\ge1}m_n^{(K)}g_n,\qquad
\operatorname{Var}_{m^{(K)}}(g)
:=\|g-m^{(K)}(g)\|_{L^2(m^{(K)})}^2.
\]
The spectral theorem gives
\[
\operatorname{Var}_{m^{(K)}}(g)
\le C_{\mathrm P}(K)\mathcal E_K^Q(g,g),
\qquad g\in D(\mathcal E_K^Q),
\]
with optimal Poincar\'e constant
\[
C_{\mathrm P}(K)=\frac1{\rho_2(K)-\rho_1(K)}\longrightarrow\frac1{\gamma_*}.
\]
Equivalently, for every $g\in L^2(m^{(K)})$ and $t\ge0$,
\[
\left\|P_t^{Q,K}g-m^{(K)}(g)\right\|_{L^2(m^{(K)})}
\le e^{-(\rho_2(K)-\rho_1(K))t}
\left\|g-m^{(K)}(g)\right\|_{L^2(m^{(K)})}.
\]
This completes the proof of \resultref{Corollary~}{cor:q-process-gap}.

For \resultref{Corollary~}{cor:two-scale-spectrum}, we combine
\resultref{Theorem~}{thm:spectral-gap-limit} with the sharp
asymptotic formula \eqref{eq:principal-eigenvalue-asymptotics}
from \cite[Theorem~A]{Hou-Yan-Zhou2026}. Since $\rho_1(K)\to0$,
\[
\rho_2(K)=\gamma_*+o(1),\qquad
\frac{\rho_2(K)-\rho_1(K)}{\rho_1(K)}\longrightarrow\infty.
\]
Recall that $\tau_0$ denotes the extinction time. The quasi-stationary distribution constructed in
\cite[Theorem~B]{Hou-Yan-Zhou2026} takes the form
\[
\nu_n^{(K)}=
\frac{\pi_n^{(K)}\varphi_{1,K}(n)}
{\sum_{j\ge1}\pi_j^{(K)}\varphi_{1,K}(j)}.
\]
From detailed balance and assumption {\rm(A3)},
$\pi_{n+1}^{(K)}/\pi_n^{(K)}\to0$, so $\sum_n\pi_n^{(K)}<\infty$.
The denominator is finite by Cauchy--Schwarz and $\varphi_{1,K}\in\ell^2(\pi^{(K)})$.

By self-adjointness, for any measurable set $\mathsf A\subset\mathbb N^*$,
\[
\mathbb P_{\nu^{(K)}}(X_t^K\in\mathsf A,\tau_0>t)
=\frac{\langle\varphi_{1,K},e^{tL_K}\mathbf1_{\mathsf A}\rangle_{\pi^{(K)}}}
{\langle\varphi_{1,K},\mathbf1\rangle_{\pi^{(K)}}}
=e^{-\rho_1(K)t}\nu^{(K)}(\mathsf A).
\]
Setting $\mathsf A=\mathbb N^*$ shows that under the
quasi-stationary law, the extinction time is exponentially distributed
with rate $\rho_1(K)$. This identifies the parameter in the general
exponential-survival property \cite[Proposition~2]{Meleard-Villemonais2012}. Consequently,
\[
\mathbb E_{\nu^{(K)}}\tau_0=\rho_1(K)^{-1}\sim A^{-1}e^{KH(x_1)}.
\]
The spectral relaxation time for the $Q$-process, defined as the inverse $L^2$ spectral gap, converges to $1/\gamma_*$.
This proves \resultref{Corollary~}{cor:two-scale-spectrum}.

A bounded spectral relaxation time as $K$ grows does not
by itself imply uniformly bounded mixing times for $K$-dependent initial
states. For example, the centered density of the point mass $\delta_n$
with respect to $m^{(K)}$ has $L^2$ norm
\[
\left\|\frac{\mathbf1_{\{n\}}}{m_n^{(K)}}-1\right\|_{L^2(m^{(K)})}
=\sqrt{\frac1{m_n^{(K)}}-1},
\]
which may diverge as $K\to\infty$. Bounds on mixing times from a given
initial distribution therefore also require control of its distance
to equilibrium.

\subsection{Examples}
\label{sec:logistic-example}
We first examine a logistic model in which the threshold determines
the gap and changes in the birth and death rates affect extinction
and spectral relaxation differently. We then vary the local rates
while keeping the positive equilibria fixed.

Consider the logistic birth--death model with per-capita rates
\[
    \widetilde\lambda(x)
    =
    r(a+b)x+s,
    \qquad
    \widetilde\mu(x)
    =
    rx^2+rab+s,
    \qquad
    r,s>0,\quad 0<a<b.
\]
Assumptions {\rm(A1)}--{\rm(A3)} and {\rm(H)} are all satisfied. Indeed,
\[
\left(\ln\frac{\widetilde\mu}{\widetilde\lambda}\right)'(x)
=\frac{r\{r(a+b)(x^2-ab)+s(2x-a-b)\}}
       {\widetilde\lambda(x)\widetilde\mu(x)}<0,
\qquad 0<x<a.
\]
The per-capita death rate grows quadratically, and the logarithmic ratio has the required decay properties at large $x$.
The two positive equilibria are $x_1=a$ and $x_2=b$, so the local rates are
\[
    \gamma_0=rab,
    \qquad
    \gamma_1=ra(b-a),
    \qquad
    \gamma_2=rb(b-a).
\]
Since $0<a<b$, we have $\gamma_1<\gamma_0$ and
$\gamma_1<\gamma_2$. Thus the threshold rate determines the limiting spectral gap throughout this family.
By \resultref{Theorem~}{thm:spectral-gap-limit},
\[
    \lim_{K\to\infty}
    \bigl(\rho_2(K)-\rho_1(K)\bigr)
    =
    ra(b-a)
    =
    V'(a).
\]
Changing $s$ adds the same amount to the per-capita birth and death
rates and leaves the deterministic drift unchanged. The limiting gap
is therefore unchanged, whereas the logarithmic potential $H$ and
the principal-eigenvalue asymptotics generally vary with $s$.
This family separates the dependence of spectral relaxation on the
local drift derivatives from the dependence of extinction on the
potential barrier.

For fixed $r,b>0$, we have
\[
\gamma_*=ra(b-a)
=r\left[\frac{b^2}{4}-\left(a-\frac b2\right)^2\right].
\]
Thus, as $a$ ranges over $(0,b)$, $\gamma_*$ takes values in $(0,rb^2/4]$ and attains its maximum at $a=b/2$.
Figure~\ref{fig:logistic-rates} plots the three local rates for
$r=s=1$ and $b=2$.

\begin{figure}[!tbp]
\centering
\begin{tikzpicture}
\begin{axis}[
    width=12cm,height=6.1cm,
    xmin=0,xmax=2,ymin=0,ymax=4.15,
    xlabel={Allee threshold $a$},ylabel={Local rate},
    xtick={0,0.5,1,1.5,2},ytick={0,1,2,3,4},
    axis lines=left,tick align=outside,
    grid=major,grid style={black!10},
    label style={font=\small},tick label style={font=\small},
    legend style={at={(0.5,1.04)},anchor=south,draw=none,
      legend columns=3,font=\small,column sep=12pt},
    clip=false]
\addplot[gapBoundary,thick,dashed,domain=0.02:1.98,samples=101] {2*x};
\addlegendentry{$\gamma_0=2a$}
\addplot[gapThreshold,very thick,domain=0.02:1.98,samples=121] {x*(2-x)};
\addlegendentry{$\gamma_1=\gamma_*=a(2-a)$}
\addplot[gapStable,thick,dashdotted,domain=0.02:1.98,samples=101] {2*(2-x)};
\addlegendentry{$\gamma_2=2(2-a)$}
\draw[black!40,densely dotted] (axis cs:1,0)--(axis cs:1,1);
\addplot[only marks,mark=*,mark size=2.3pt,gapThreshold,forget plot]
  coordinates {(1,1)};
\node[anchor=south,font=\small] at (axis cs:1,1.22)
  {$\max\gamma_*=1$};
\end{axis}
\end{tikzpicture}
\caption{Local rates and the limiting principal spectral gap in the
logistic model, with $r=s=1$, $b=2$, and $0<a<2$.
The solid curve represents $\gamma_* = \gamma_1=a(2-a)$,
which attains its maximum $1$ at $a=1$. The curves are given by the exact formulas for the local rates in
\resultref{Theorem~}{thm:spectral-gap-limit}.}
\label{fig:logistic-rates}
\end{figure}

The dominance of the threshold rate $\gamma_1$ in the logistic family
does not extend to all models. Each of the three local rates can be
the unique minimum, even when the positive equilibria are fixed, as
the following proposition shows.

\begin{proposition}[Each local rate can determine the gap]
\label{prop:realization-three-mechanisms}
Fix $0<a<b$. For each $j\in\{0,1,2\}$, there exist smooth positive
rate functions $\widetilde\lambda,\widetilde\mu$ satisfying
{\rm(A1)}--{\rm(A3)} and {\rm(H)}, with $x_1=a$ and $x_2=b$, such that
\[
\gamma_j<\min_{\substack{i\in\{0,1,2\}\\i\ne j}}\gamma_i.
\]
\end{proposition}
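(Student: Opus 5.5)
The plan is to build all three examples from one parametrized family, in which the drift is factored so that the three local rates can be read off directly. Take
\[
\widetilde\lambda(x)=s+Mx,\qquad
\widetilde\mu(x)=\widetilde\lambda(x)+q(x),\qquad
q(x):=(x-a)(x-b)w(x),
\]
with constants $s,M>0$ and a smooth weight $w>0$ to be chosen. We take $w$ constant, equal to $w_\infty>0$, on $[b+1,\infty)$. Then $V(x)=-x(x-a)(x-b)w(x)$, so differentiating at $0$, $a$ and $b$ gives
\[
\gamma_0=ab\,w(0),\qquad \gamma_1=a(b-a)\,w(a),\qquad \gamma_2=b(b-a)\,w(b).
\]
The sign pattern of $q$ gives exactly two transverse positive intersections, at $x_1=a$ and $x_2=b$, together with the required sign of $\widetilde\lambda-\widetilde\mu$ on each interval. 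Since $w(0),w(a),w(b)$ can be prescribed freely, we choose $w$ so that the three rates are ordered as required. For $j=0$ we take $w(0)$ small, for $j=1$ we take $w(a)$ small, and for $j=2$ we take $w(b)$ small. In each case the other two values are kept of order one, so that one rate is strictly the smallest.

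Next we verify {\rm(A1)}--{\rm(A3)} and {\rm(H)}, in this order.
\begin{itemize}
\item {\rm(A1)}: smoothness is clear, and $\widetilde\mu(0)-\widetilde\lambda(0)=ab\,w(0)>0$.
\item Monotonicity in {\rm(A2)}: $\widetilde\lambda'=M>0$, and $\widetilde\mu'=M+q'$. Here $q'$ is bounded on $[0,b+1]$, and on $[b+1,\infty)$ we have $q'=w_\infty(2x-a-b)>0$. Hence $\widetilde\mu$ is increasing once $M>\sup_{[0,b+1]}|q'|$.
\item {\rm(A3)}: on $[b+1,\infty)$, $\widetilde\mu$ is a quadratic polynomial with positive leading coefficient and $\widetilde\lambda$ is linear. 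This gives $\widetilde\lambda/\widetilde\mu\to0$, a bounded $\widetilde\mu'/\widetilde\mu$, and $\int^\infty dx/(x\widetilde\mu)<\infty$.
\item {\rm(H)}: on $[b+1,\infty)$, $H'=\ln(\widetilde\mu/\widetilde\lambda)$ is the logarithm of a ratio of polynomials. Its derivatives therefore satisfy $H''=O(1/x)$ and $H'''=O(1/x^2)$, which gives $(1+x^2)|H'''|$ bounded. On compact sets, boundedness follows from smoothness and positivity of the rates.
\end{itemize}

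The main obstacle is the remaining condition in {\rm(A2)}: $\ln(\widetilde\mu/\widetilde\lambda)=\ln(1+q/\widetilde\lambda)$ must be strictly decreasing on $(0,a)$. On that interval this is equivalent to
\[
q'\widetilde\lambda-M q<0,
\qquad\text{i.e.}\qquad
q'(x)\Bigl(\frac sM+x\Bigr)<q(x),
\qquad 0<x<a.
\]
In the cases $j=1,2$ we may take $w$ nonincreasing on $[0,a]$. Then $q'<0<q$ there and the inequality is automatic.

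The delicate case is $j=0$, which needs $w(0)$ small compared with $w(a)$, so $w$ must increase on $[0,a]$. There I would first require the $M$-independent strict inequality $q'(x)x<q(x)$ on $(0,a]$. This says that $q(x)/x=(a-x)(b-x)w(x)/x$ is strictly decreasing on $(0,a)$. Because the factor $(a-x)(b-x)/x$ decreases steeply, this remains compatible with a moderately increasing $w$. As a first attempt I would take $w$ affine on $[0,a]$, with $w(0)=\epsilon$ and $w(a)=1$, and check the sign of the derivative of $q/x$ directly. If the steepness is insufficient, I would instead let $w$ rise sharply near $0$ and stay flat afterwards, making the claim a one-variable check.

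After that, the correction term $q's/M$ is handled by taking $M$ large. Near $x=0$ we have $q\ge ab\,w(0)>0$, so the term is absorbed for large $M$. Near $x=a$ we have $q'(a)=-(b-a)w(a)<0$, so the left side is negative while $q(a)=0$. Between these endpoints a compactness argument gives a uniform margin. Finally, $M$ is also chosen large enough for the monotonicity step above, which completes the construction in all three cases.
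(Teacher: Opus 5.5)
Your construction is correct, and it takes a genuinely different route from the paper. The paper works multiplicatively, with $\widetilde\lambda=e^{Mx}$ and $\widetilde\mu=e^{Mx+h(x)}$, so that $\ln(\widetilde\mu/\widetilde\lambda)=h=H'$. The log-monotonicity in (A2) is then just $h'<0$ on $(0,a)$, and (H) reduces to $h''$ having compact support, in all three cases at once. The price is that the rates $\gamma_0=e^{h(0)}-1$, $\gamma_1=-ae^{Ma}h'(a)$ and $\gamma_2=be^{Mb}h'(b)$ involve $M$. The paper therefore needs a bound on $\|h'\|_\infty$ independent of the tuning parameters, so that $M$ can be fixed in advance.

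Your additive choice $\widetilde\mu=\widetilde\lambda+(x-a)(x-b)w$ reverses this trade-off. The factorization $V=-x(x-a)(x-b)w$ makes the rates independent of $s$ and $M$, so $M$ can be chosen last. Your family also contains the paper's logistic example as the case $w\equiv r$, $M=r(a+b)$; a non-constant weight is exactly what breaks the dominance of $\gamma_1$ there. The cost is that log-monotonicity on $(0,a)$ is no longer automatic. In the case $j=0$ it needs an argument, since $\gamma_0<\gamma_1$ forces $w(a)>\frac{b}{b-a}w(0)$.

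That remaining check goes through with your first attempt. On $(0,a)$ write $q/x=(a-x)(b-x)\cdot\bigl(w(x)/x\bigr)$. The first factor is positive and strictly decreasing, so it suffices that $w/x$ be nonincreasing, i.e.\ $xw'\le w$. This holds for affine $w$ with $w(0)>0$, and more generally for any positive concave $w$ on $[0,a]$. At the endpoints, $q(0)=ab\,w(0)>0$ and $q(a)-aq'(a)=a(b-a)w(a)>0$. Hence $q-xq'$ is continuous and positive on $[0,a]$, so it is at least some $c>0$. Any $M>s\sup_{[0,a]}|q'|/c$ then gives $q'(s/M+x)<q$ there.

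Your fallback of a sharp rise should be of this concave type. The constraint is $(\ln w)'<-(\ln\phi)'$ with $\phi(x)=(a-x)(b-x)/x$, which bounds $w'/w$ on compact subsets of $(0,a)$. An abrupt sigmoidal rise inside $(0,a)$ can therefore fail. Two small points: the remark that $q\ge ab\,w(0)$ near $0$ is not literally true when $w$ increases, but the compactness bound above makes it unnecessary. Positivity of $\widetilde\mu$ on $(a,b)$, which you do not state, follows from its monotonicity and $\widetilde\mu(0)>0$.
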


\begin{proof}
Choose $h\in C^3([0,\infty))$ satisfying
\[
\begin{cases}
    h(x)>0, &0\leq x<a,\\
    h(a)=0,\quad h(x)<0, &a<x<b,\\
    h(b)=0,\quad h(x)>0, &x>b,
\end{cases}
\]
together with
\[
    h'(x)<0\quad (0<x<a),
    \qquad
    h'(a)<0,
    \qquad
    h'(b)>0.
\]
We also require that there exist $R>b$, $c>0$,
$d\in\mathbb R$ such that
\[
    h(x)=cx+d,
    \qquad x\geq R.
\]
Choose $M>\|h'\|_\infty$, and set
\[
    \widetilde\lambda(x)=e^{Mx},
    \qquad
    \widetilde\mu(x)=e^{Mx+h(x)}.
\]
Then
\[
    \widetilde\lambda'(x)=Me^{Mx}>0,
    \qquad
    \widetilde\mu'(x)
    =
    \bigl(M+h'(x)\bigr)e^{Mx+h(x)}>0,
\]
and
\[
    \ln\frac{\widetilde\mu(x)}
              {\widetilde\lambda(x)}
    =
    h(x).
\]
The signs of $h$ and its derivatives imply
${\rm(A1)}$--${\rm(A2)}$, with equilibria $x_1=a$ and $x_2=b$.
Since $h$ is affine for large $x$,
\[
    \frac{\widetilde\lambda(x)}
         {\widetilde\mu(x)}
    =
    e^{-h(x)}
    \longrightarrow0,
    \qquad
    \frac{\widetilde\mu'(x)}
         {\widetilde\mu(x)}
    =
    M+h'(x),
\]
and
\[
    \int_b^\infty
    \frac{dx}{x\widetilde\mu(x)}
    =
    \int_b^\infty
    \frac{e^{-Mx-h(x)}}{x}\,dx
    <\infty.
\]
Thus ${\rm(A3)}$ holds.
Moreover,
\[
    H'(x)
    =
    h(x),
    \qquad
    H'''(x)
    =
    h''(x).
\]
Since $h$ is affine for large $x$, $h''$ has compact support, and therefore
\[
    \sup_{x\geq0}
    (1+x^2)|H'''(x)|
    <\infty.
\]
Thus ${\rm(H)}$ also holds.

The three local rates are
\[
    \gamma_0
    =
    e^{h(0)}-1,
    \qquad
    \gamma_1
    =
    -ae^{Ma}h'(a),
    \qquad
    \gamma_2
    =
    be^{Mb}h'(b).
\]
All three are strictly positive. We construct $h$ so that any one of
them can be the unique minimum while keeping $M$ fixed.

Choose smooth cutoffs $\theta:[a,b]\to[0,1]$ and
$\kappa:[b,\infty)\to[0,1]$: $\theta=1$ near $a$,
$\theta=0$ near $b$, $\kappa=1$ near $b$, and
$\kappa=0$ on $[b+1,\infty)$.
Let $w\in C^\infty([0,a])$ satisfy $0<w\le1$, taking value
$p\in(0,1]$ near $a$, and choose $q\in(0,1]$.
Define
\[
h(x)=
\begin{cases}
\displaystyle\int_x^a w(t)\,dt,&0\le x\le a,\\[2mm]
-p(x-a)\theta(x)-q(b-x)(1-\theta(x)),&a\le x\le b,\\[1mm]
(x-b)\bigl(q\kappa(x)+1-\kappa(x)\bigr),&x\ge b.
\end{cases}
\]
The formulas define the same linear function on both sides of each
junction, so $h$ is smooth. It has the required signs, and $h'=-w<0$ on $(0,a)$,
$h'(a)=-p$, $h'(b)=q$, and $h(x)=x-b$ for $x\ge b+1$. Moreover,
\[
\|h'\|_\infty\le
C_h:=1+(b-a)\|\theta'\|_\infty+\|\kappa'\|_\infty,
\]
where the bound does not depend on $p,q,w$. We fix $M>C_h$ once and for all.

To obtain the three cases, choose smooth functions $\eta_{\mathrm{ex}}$ on
$[0,\infty)$ and $\omega_{\mathrm{ex}}$ on $[0,a]$, both mapping into
$[0,1]$, such that $\eta_{\mathrm{ex}}=1$ near zero, $\eta_{\mathrm{ex}}=0$ on
$[1,\infty)$, $\omega_{\mathrm{ex}}=1$ near zero, and $\omega_{\mathrm{ex}}=0$ near
$a$. The following choices give the three cases, with limits taken
as $\varepsilon\downarrow0$.
\begin{enumerate}
\item To make the boundary rate the unique minimum, set
$w_\varepsilon(x)=\varepsilon+(1-\varepsilon)
\eta_{\mathrm{ex}}((a-x)/\varepsilon)$ and $q=1$.
Then $p=1$ and $0<h(0)\le(a+1)\varepsilon$.
Thus $\gamma_0\to0$, while $\gamma_1=ae^{Ma}$ and
$\gamma_2=be^{Mb}$ remain unchanged and positive.
\item To make the threshold rate the unique minimum, set
$w_\varepsilon=\varepsilon+(1-\varepsilon)\omega_{\mathrm{ex}}$ and $q=1$.
Then $p=\varepsilon$, and
$h(0)\to\int_0^a\omega_{\mathrm{ex}}(t)\,dt>0$.
Hence $\gamma_1=ae^{Ma}\varepsilon\to0$, and the other two
rates stay bounded away from zero.
\item To make the stable-equilibrium rate the unique minimum, set $w\equiv1$ and
$q=\varepsilon$. Then $p=1$, $h(0)=a$, and
$\gamma_2=be^{Mb}\varepsilon\to0$, while $\gamma_0$ and
$\gamma_1$ remain fixed and positive.
\end{enumerate}
For each sufficiently small fixed $\varepsilon>0$, the resulting smooth
rates satisfy all standing assumptions, and the chosen rate is the
unique minimum. Assumption {\rm(H)} requires a finite bound for each
model, not a uniform bound over the parameter family. We fix
$\varepsilon$ before taking $K\to\infty$.
\end{proof}

An explicit two-parameter family can be obtained using the following
$C^3$ interpolation, which gives examples for all three cases in
\resultref{Proposition~}{prop:realization-three-mechanisms}.
Define
\[
\mathfrak s(t)=
\begin{cases}
0,&t\le0,\\
35t^4-84t^5+70t^6-20t^7,&0<t<1,\\
1,&t\ge1.
\end{cases}
\]
This $C^3(\mathbb R)$ function satisfies
$\mathfrak s(1-t)=1-\mathfrak s(t)$ and $0\le\mathfrak s'(t)\le35/16$. Set $a=1/10$, $b=1/5$,
and $M=10$. In the piecewise definition of $h$ above, take
\[
\begin{aligned}
\theta(x)&=1-\mathfrak s\left(\frac{2(x-a)}{b-a}-\frac12\right),
&\kappa(x)&=1-\mathfrak s\left(2(x-b)-\frac12\right),\\
\omega_{\mathrm{ex}}(x)&=1-\mathfrak s\left(\frac{2x}{a}-\frac12\right),
&w_p(x)&=p+(1-p)\omega_{\mathrm{ex}}(x),
\end{aligned}
\]
with $0<p,q\le1$, and let $w=w_p$.
The cutoffs are constant near the junctions, so $h\in C^3$. Moreover,
\[
\|h'\|_\infty\le 1+\frac{35}{8}+\frac{35}{8}
=\frac{39}{4}<M.
\]
Hence $\widetilde\lambda=e^{10x}$ and
$\widetilde\mu=e^{10x+h(x)}$ are strictly increasing.
The sign and tail conditions in the preceding proof remain valid.
Thus {\rm(A1)}--{\rm(A3)} and {\rm(H)} hold, since $C^3$ regularity is sufficient for these assumptions.
By the symmetry of $\mathfrak s$,
$\int_0^a\omega_{\mathrm{ex}}(x)\,dx=a/2$, giving
\[
\gamma_0=e^{(1+p)/20}-1,\qquad
\gamma_1=\frac{e}{10}p,\qquad
\gamma_2=\frac{e^2}{5}q.
\]

Figure~\ref{fig:three-mechanisms} compares local rates for
\[
A:(p,q)=(0.4,0.1),\qquad
B:(p,q)=(0.1,0.1),\qquad
C:(p,q)=(0.4,0.02),
\]
for which the unique minimum is attained by $\gamma_0$, $\gamma_1$, and
$\gamma_2$, respectively. The parameter plot shows the regions where each rate is minimal. Two rates coincide along the separating curves, and all three coincide at their intersection.

\begin{figure}[!tbp]
\centering
\begin{tikzpicture}
\begin{axis}[
    name=bars,width=7.3cm,height=6.0cm,
    xmin=0.5,xmax=3.5,ymin=0,ymax=0.175,
    xtick={1,2,3},
    xticklabels={\shortstack{$A$\\$\gamma_*=0.0725$},
                 \shortstack{$B$\\$\gamma_*=0.0272$},
                 \shortstack{$C$\\$\gamma_*=0.0296$}},
    ytick={0,0.05,0.10,0.15},
    ylabel={Local rate},
    scaled y ticks=false,
    yticklabel style={/pgf/number format/fixed,/pgf/number format/precision=2},
    title={(a) Three parameter choices},title style={font=\small,yshift=17pt},
    label style={font=\small},tick label style={font=\scriptsize},
    axis lines=left,tick align=outside,
    ymajorgrids=true,grid style={black!10},
    legend style={at={(0.5,1.01)},anchor=south,draw=none,
      legend columns=3,font=\small,column sep=4pt},clip=false]
\addplot[ybar,area legend,bar width=8pt,bar shift=-10pt,fill=gapBoundary!75,draw=gapBoundary] coordinates {(1,0.072508181254) (2,0.056540614675) (3,0.072508181254)};
\addlegendentry{$\gamma_0$}
\addplot[ybar,area legend,bar width=8pt,bar shift=0pt,fill=gapThreshold!75,draw=gapThreshold] coordinates {(1,0.108731273138) (2,0.027182818285) (3,0.108731273138)};
\addlegendentry{$\gamma_1$}
\addplot[ybar,area legend,bar width=8pt,bar shift=10pt,fill=gapStable!75,draw=gapStable] coordinates {(1,0.147781121979) (2,0.147781121979) (3,0.029556224396)};
\addlegendentry{$\gamma_2$}
\node[font=\scriptsize,anchor=south] at ([xshift=-10pt]axis cs:1,0.072508181254)
  {$\star$};
\node[font=\scriptsize,anchor=south] at (axis cs:2,0.027182818285)
  {$\star$};
\node[font=\scriptsize,anchor=south] at ([xshift=10pt]axis cs:3,0.029556224396)
  {$\star$};
\end{axis}
\begin{axis}[
    at={(bars.east)},anchor=west,xshift=1.25cm,
    width=7.2cm,height=6.0cm,
    xmin=0.02,xmax=0.5,ymin=0.005,ymax=0.13,
    xlabel={$p$},ylabel={$q$},
    xtick={0.1,0.2,0.3,0.4,0.5},ytick={0.02,0.04,0.06,0.08,0.10,0.12},
    scaled ticks=false,
    yticklabel style={/pgf/number format/fixed,/pgf/number format/precision=2,
      /pgf/number format/fixed zerofill},
    title={(b) Dominant local mechanism},title style={font=\small,yshift=17pt},
    label style={font=\small},tick label style={font=\scriptsize},
    axis on top,tick align=outside]
\path[fill=gapStable!14] (axis cs:0.02,0.005) rectangle (axis cs:0.5,0.13);
\addplot[draw=none,fill=gapThreshold!16] coordinates {
  (0.02,0.003678794) (0.234161947,0.043071683) (0.234161947,0.13) (0.02,0.13)
} \closedcycle;
\addplot[draw=none,fill=gapBoundary!13] coordinates {(0.234161947,0.043071683) (0.237959634,0.043208365) (0.241757320,0.043345073) (0.245555007,0.043481807) (0.249352693,0.043618566) (0.253150380,0.043755352) (0.256948066,0.043892164) (0.260745753,0.044029001) (0.264543439,0.044165865) (0.268341126,0.044302755) (0.272138812,0.044439670) (0.275936498,0.044576612) (0.279734185,0.044713580) (0.283531871,0.044850573) (0.287329558,0.044987593) (0.291127244,0.045124639) (0.294924931,0.045261710) (0.298722617,0.045398808) (0.302520304,0.045535932) (0.306317990,0.045673082) (0.310115677,0.045810258) (0.313913363,0.045947460) (0.317711050,0.046084688) (0.321508736,0.046221942) (0.325306423,0.046359222) (0.329104109,0.046496528) (0.332901795,0.046633860) (0.336699482,0.046771218) (0.340497168,0.046908603) (0.344294855,0.047046013) (0.348092541,0.047183450) (0.351890228,0.047320913) (0.355687914,0.047458401) (0.359485601,0.047595916) (0.363283287,0.047733457) (0.367080974,0.047871024) (0.370878660,0.048008618) (0.374676347,0.048146237) (0.378474033,0.048283883) (0.382271720,0.048421554) (0.386069406,0.048559252) (0.389867092,0.048696976) (0.393664779,0.048834726) (0.397462465,0.048972502) (0.401260152,0.049110305) (0.405057838,0.049248133) (0.408855525,0.049385988) (0.412653211,0.049523869) (0.416450898,0.049661776) (0.420248584,0.049799710) (0.424046271,0.049937669) (0.427843957,0.050075655) (0.431641644,0.050213667) (0.435439330,0.050351705) (0.439237017,0.050489769) (0.443034703,0.050627860) (0.446832389,0.050765977) (0.450630076,0.050904120) (0.454427762,0.051042289) (0.458225449,0.051180484) (0.462023135,0.051318706) (0.465820822,0.051456954) (0.469618508,0.051595228) (0.473416195,0.051733529) (0.477213881,0.051871856) (0.481011568,0.052010209) (0.484809254,0.052148588) (0.488606941,0.052286994) (0.492404627,0.052425425) (0.496202314,0.052563884) (0.500000000,0.052702368) (0.500000000,0.130000000) (0.234161947,0.130000000)} \closedcycle;
\addplot[black!70,thick,domain=0.02:0.234161947364,samples=2] {0.183939720586*x};
\addplot[black!70,thick,domain=0.234161947364:0.5,samples=81]
  {(exp(0.05*(1+x))-1)/(0.2*exp(2))};
\draw[black!70,thick] (axis cs:0.234161947,0.043071683)
  --(axis cs:0.234161947,0.13);
\node[font=\scriptsize,align=center] at (axis cs:0.115,0.065)
  {Threshold\\$\gamma_*=\gamma_1$};
\node[font=\scriptsize,align=center] at (axis cs:0.37,0.072)
  {Boundary\\$\gamma_*=\gamma_0$};
\node[font=\scriptsize,align=center] at (axis cs:0.21,0.017)
  {Stable\\$\gamma_*=\gamma_2$};
\addplot[only marks,mark=*,mark size=1.8pt,black]
  coordinates {(0.4,0.1) (0.1,0.1) (0.4,0.02)};
\node[anchor=south,font=\small] at (axis cs:0.4,0.103) {$A$};
\node[anchor=south,font=\small] at (axis cs:0.1,0.103) {$B$};
\node[anchor=south,font=\small] at (axis cs:0.4,0.023) {$C$};
\end{axis}
\end{tikzpicture}
\caption{The limiting principal spectral gap in the explicit family with $a=1/10$, $b=1/5$,
and $M=10$.
(a) Local rates at the parameter choices $A,B,C$. Stars mark
the minimum, and the displayed values of $\gamma_*$ are rounded
to four decimal places.
(b) Regions in which the boundary, threshold, or stable-equilibrium
rate is smallest, for $0.02\le p\le0.5$ and
$0.005\le q\le0.13$. Black curves indicate equality of the
two smallest rates.
The values follow from the explicit formulas for the local rates.}
\label{fig:three-mechanisms}
\end{figure}

\subsection{Diffusion approximation and further spectral questions}
\label{subsec:diffusion-comparison}
The three-rate formula gives a natural point of comparison with a
diffusion approximation. The issue is whether an approximation on
the density scale also captures the boundary contribution to the
spectrum.
Let
$\mathcal G_K$ denote the generator of the scaled density process $X^K/K$.
For each fixed $f\in C_c^3((0,\infty))$, Taylor expansion yields
\[
(\mathcal G_Kf)(x)=(\mathcal A_Kf)(x)+O_{f,J}(K^{-2}),
\]
uniformly over lattice points $x=n/K$ belonging to any compact interval
$J\subset(0,\infty)$, where
\[
(\mathcal A_Kf)(x):=V(x)f'(x)
+\frac{x(\widetilde\lambda(x)+\widetilde\mu(x))}{2K}f''(x).
\]
The operator $\mathcal A_K$ is the formal generator for the absorbing diffusion
\[
dY_t^K=V(Y_t^K)\,dt+
\sqrt{\frac{Y_t^K\bigl(\widetilde\lambda(Y_t^K)
+\widetilde\mu(Y_t^K)\bigr)}{K}}\,dW_t.
\]
Here $(W_t)_{t\ge0}$ is standard one-dimensional Brownian
motion. The coefficients have the form considered in
\cite[Eq.~(1.1)]{Yan-Zhou-continuous}, with
\[
b(x)=V(x),\qquad
 a(x)=x\bigl(\widetilde\lambda(x)+\widetilde\mu(x)\bigr),\qquad
 \epsilon=K^{-1/2}.
\]
In particular, $b'(0)=-\gamma_0<0$ and
$a'(0)=\widetilde\lambda(0)+\widetilde\mu(0)>0$.
The approximating diffusion has an attracting origin, and its
infinitesimal variance vanishes linearly at zero. Applying results
from Yan and Zhou requires their regularity and tail assumptions,
including $a\in C^3((0,\infty))$. This regularity does not follow
from {\rm(A1)}, which assumes only $C^2$ smoothness.

For $K$-dependent test functions resolving the microscopic boundary
layer, the remainder in the density-scale expansion need not tend to
zero. For example, take a smooth, compactly
supported function $g$ on $[0,\infty)$ with $g(0)=0$, and define
$f_K(x)=g(Kx)$. At each fixed index $n\ge1$, direct calculation gives
\[
\begin{aligned}
(\mathcal G_K f_K)(n/K)&\longrightarrow
 (\mathcal L_b^{(0)}g)_n,\\
(\mathcal A_K f_K)(n/K)&\longrightarrow
 (\mathcal B g)(n),
\end{aligned}
\qquad
\mathcal B=-\gamma_0y\partial_y+
\frac{\alpha_0+\beta_0}{2}\,y\partial_{yy}.
\]
Here $\alpha_0,\beta_0$ are the boundary-scale constants from
Subsection~\ref{subsec:local-bounds}, and $\mathcal L_b^{(0)}$ is the frozen-boundary generator appearing in
\resultref{Lemma~}{lem:left-boundary-coercive}. In the first limit we identify the continuous function $g$ with the sequence $g_n=g(n)$.

The frozen discrete chain has unit jump size, whereas $\mathcal B$ depends only on the first two jump moments. In particular, choosing $g(y)=y^3$ on $[0,3]$ and evaluating at $n=1$ gives
\[
(\mathcal L_b^{(0)}g)_1-(\mathcal B g)(1)
=(7\alpha_0-\beta_0)-6\alpha_0=-\gamma_0\ne0.
\]
Thus the truncation error need not vanish for profiles on the scale
$x=O(K^{-1})$: the density-scale expansion does not give a uniform
approximation of the microscopic generator near the absorbing
boundary. This example uses $K$-dependent test functions and does not
by itself rule out matching limits for quasi-stationary distributions,
principal eigenvalues, or spectral gaps.

In fact, the two frozen absorbing models have the same spectral
bottom. If $\mathcal H_b^{\mathrm{diff}}$ denotes the absorbing
self-adjoint realization of $-\mathcal B$ defined in
Appendix~\ref{sec:frozen-diffusion-bottom}, then
\resultref{Proposition~}{prop:frozen-diffusion-bottom} gives
\[
\inf\operatorname{Spec}(\mathcal H_b^{\mathrm{diff}})
=\inf\operatorname{Spec}(\mathcal H_b^{(0)})=\gamma_0.
\]
The proof uses the positive eigenfunction $\ell(y)=y$ and a
ground-state identity; the form-domain justification is given in
that appendix. Thus the boundary rate $\gamma_0$ is present in both
local models. Comparing their global gap limits would also require
the interior estimates, tail control, and the corresponding
orthogonality argument for the diffusion. For the discrete process,
these are supplied directly by the IMS proof of
\resultref{Theorem~}{thm:spectral-gap-limit}.

Doering et~al.~\cite[Section~3.2]{Doering-Sargsyan-Sander2005} give birth--death
examples in which the standard Fokker--Planck approximation fails to
reproduce the leading exponential asymptotics of extinction times.
Their results concern extinction times and do not settle the
spectral-gap comparison discussed here.

Within the discrete model, the three possible minimizing rates lead
to a further question about the second eigenvector of the Jacobi
operator. When the minimum is unique, the local energy bounds suggest
concentration in the corresponding region. Identifying its limiting
profile requires compactness on the relevant local scale and control
of the associated eigenspace. The present proof provides interior
compactness; boundary compactness and a complete treatment of the
limiting eigenvectors are left open. When local rates coincide, one
must also determine how the eigenvectors combine modes from the
different regions.

The local spectra suggest an extension of the two-spectrum limit
in \cite[Theorem~1.2]{Chazottes-Collet-Meleard2023} to the
strong-Allee setting. Here the three local spectra are
\[
\begin{array}{ll}
\text{frozen boundary:} & \{k\gamma_0:k\ge1\},\\
\text{unstable threshold:} & \{k\gamma_1:k\ge1\},\\
\text{stable equilibrium:} & \{k\gamma_2:k\ge0\}.
\end{array}
\]
The boundary spectrum is recalled in
Appendix~\ref{sec:frozen-boundary-spectrum}. The stable oscillator has
a zero mode, consistent with the exponentially small global principal
eigenvalue. Define the multiset union
\[
\mathcal M:=\biguplus_{i=0}^{2}\{k\gamma_i:k=1,2,\ldots\},
\]
where coinciding local levels are counted with multiplicity. Write the
elements in nondecreasing order as
$\Lambda_2\le\Lambda_3\le\cdots$, and set $\Lambda_1=0$. We conjecture that for every fixed integer $j\ge1$,
\[
\lim_{K\to\infty}\rho_j(K)=\Lambda_j.
\]
Equation~\eqref{eq:rho1-vanishes} and
\resultref{Theorem~}{thm:spectral-gap-limit} verify this conjecture for $j=1$ and $j=2$.
A proof for higher eigenvalues would require finite-dimensional
min--max comparisons and a description of the local spectral
subspaces. The coercivity bounds in
\resultref{Lemma~}{lem:middle-region-coercivity} and
\resultref{Lemma~}{lem:right-tail-coercive} control the intermediate
region and the far tail. One would still need boundary compactness
and a treatment of multiplicities when different local operators
share an eigenvalue. The conjecture therefore goes beyond the
principal-gap limit proved here.

\appendix
\section{Spectrum of the frozen boundary operator}
\label{sec:frozen-boundary-spectrum}

The full frozen spectrum follows from
\cite[Theorem~7.1]{Chazottes-Collet-Meleard2023}.
On $c_{00}(\mathbb N^*)$, the Jacobi action is
\[
(\mathcal H_b^{(0)}z)_n
=n(\alpha_0+\beta_0)z_n
-\sqrt{\alpha_0\beta_0n(n+1)}\,z_{n+1}
-\sqrt{\alpha_0\beta_0n(n-1)}\,z_{n-1},
\]
where the last term vanishes at $n=1$.
The coefficients depend only on the sum and product of
$\alpha_0,\beta_0$. In the notation of the cited theorem, taking
$b'(0)=\beta_0$ and $d'(0)=\alpha_0$ identifies this operator with $-M_0$.
That theorem gives a self-adjoint closure on $\ell^2$, which
coincides with the Friedrichs realization used here. Consequently,
\begin{equation}\label{eq:frozen-full-spectrum}
\operatorname{Spec}(\mathcal H_b^{(0)})
=\{k\gamma_0:k=1,2,\ldots\},
\end{equation}
and every eigenvalue is simple. The lowest eigenvector in
\cite[Eq.~(7.1)]{Chazottes-Collet-Meleard2023} is proportional to
$\sqrt n\,r_0^{n/2}$, in agreement with the ground state used in
\resultref{Lemma~}{lem:left-boundary-coercive}.

\section{Spectral bottom of the frozen boundary diffusion}
\label{sec:frozen-diffusion-bottom}

We give the form-domain argument used in
Subsection~\ref{subsec:diffusion-comparison}. Recall that
$0<\alpha_0<\beta_0$ and $\gamma_0=\beta_0-\alpha_0$. Set
\[
c_b:=\frac{\alpha_0+\beta_0}{2},\qquad
\kappa_b:=\frac{\gamma_0}{c_b},\qquad
m_b(dy):=\frac{e^{-\kappa_b y}}{c_b y}\,dy.
\]
On $L^2(m_b)$, consider the densely defined symmetric form
\[
\mathcal E_b(u,v):=\int_0^\infty
e^{-\kappa_b y}u'(y)v'(y)\,dy,
\qquad u,v\in C_c^\infty((0,\infty)).
\]

\begin{proposition}\label{prop:frozen-diffusion-bottom}
The form $\mathcal E_b$ is closable. Denote its closure by the same
symbol, and let $\mathcal H_b^{\mathrm{diff}}$ be the associated
nonnegative self-adjoint operator, the absorbing realization of
\[
-\mathcal B=\gamma_0y\partial_y-c_by\partial_{yy}.
\]
Then $\ell(y)=y$ belongs to $D(\mathcal H_b^{\mathrm{diff}})$,
$\mathcal H_b^{\mathrm{diff}}\ell=\gamma_0\ell$, and
\[
\inf\operatorname{Spec}(\mathcal H_b^{\mathrm{diff}})
=\gamma_0.
\]
\end{proposition}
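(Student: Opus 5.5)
The plan mirrors the ground-state transform used for the discrete frozen model in \resultref{Lemma~}{lem:left-boundary-coercive}, carried over to the weighted half-line. The three parts are: closability of $\mathcal E_b$, membership of $\ell$ in the operator domain, and a ground-state identity that gives the lower bound $\gamma_0$.

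\emph{Closability.} I would write $\mathcal E_b(u,u)=\int_0^\infty e^{-\kappa_b y}|u'|^2\,dy$. This is the form of $-\mathcal B$, since
\[
\frac{d}{dy}\bigl(e^{-\kappa_b y}u'\bigr)=e^{-\kappa_b y}(u''-\kappa_b u'),
\qquad
c_b y\,e^{\kappa_b y}\,\frac{d}{dy}\bigl(e^{-\kappa_b y}u'\bigr)=c_b y u''-\gamma_0 y u' .
\]
Take $u_j\to0$ in $L^2(m_b)$ with $u_j$ Cauchy in $\mathcal E_b$. Then $e^{-\kappa_b y/2}u_j'\to F$ in $L^2(dy)$, and on compact subintervals of $(0,\infty)$ the $u_j$ converge in $H^1_{\mathrm{loc}}$. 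Their limit is $0$, so the weak derivative vanishes and $F=0$. This is the standard closability argument for weighted Dirichlet forms with locally bounded, locally positive weights.

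\emph{The eigenfunction.} The function $\ell(y)=y$ lies in $L^2(m_b)$, because $\int y^2e^{-\kappa_b y}/(c_by)\,dy<\infty$. To put it in the form domain I would approximate by $\ell_N=\ell\,\xi_N$. Here $\xi_N$ is a cutoff equal to $1$ on $[2/N,N]$, with $\xi_N(y)=\xi(Ny)$ near $0$ and a logarithmic or linear cutoff near infinity. Near $0$ the derivative error is $\int_0^{2/N}|(y\xi(Ny))'|^2\,dy=O(1/N)\to0$, and near infinity the exponential weight makes the error vanish. So $\ell_N\to\ell$ in form norm. Integration by parts against test functions $v\in C_c^\infty$ gives
\[
\mathcal E_b(\ell,v)=\int e^{-\kappa_b y}v'\,dy=\kappa_b\int e^{-\kappa_b y}v\,dy=\gamma_0\int \ell\, v\,dm_b .
\]
This extends to $v$ in the form domain, and \cite[Theorem~2.14]{Teschl2014} then gives $\ell\in D(\mathcal H_b^{\mathrm{diff}})$ with $\mathcal H_b^{\mathrm{diff}}\ell=\gamma_0\ell$. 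That yields $\inf\operatorname{Spec}\le\gamma_0$.

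\emph{Lower bound.} For $u=\ell h$ with $u\in C_c^\infty((0,\infty))$, so that $h=u/y$ is smooth and compactly supported, I would expand $u'=h+yh'$. Integrating the cross term $2\int e^{-\kappa_b y}yhh'\,dy=\int e^{-\kappa_b y}y(h^2)'\,dy$ by parts should give
\[
\mathcal E_b(u,u)-\gamma_0\|u\|_{L^2(m_b)}^2=\int_0^\infty e^{-\kappa_b y}y^2|h'|^2\,dy\ge0 .
\]
It remains to check the constant. The cross term equals $-\int(e^{-\kappa_b y}y)'h^2\,dy=\int e^{-\kappa_b y}(\kappa_b y-1)h^2\,dy$. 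Adding the term $\int e^{-\kappa_b y}h^2\,dy$ leaves $\kappa_b\int e^{-\kappa_b y}yh^2\,dy$. On the other side, $\gamma_0\|u\|^2=\gamma_0\int y^2h^2e^{-\kappa_b y}/(c_by)\,dy=\kappa_b\int y h^2e^{-\kappa_b y}\,dy$, so the two match and the identity holds. Density of $C_c^\infty$ in the form domain, which holds by definition of the closure, extends the inequality $\mathcal E_b\ge\gamma_0\|\cdot\|^2$ to the whole form domain. Hence $\inf\operatorname{Spec}=\gamma_0$.

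\emph{Main obstacle.} The delicate step is the domain membership of $\ell$: since $\ell(0)=0$ but $\ell'(0)\neq0$, the cutoff near $0$ must be shown to cost vanishing energy. The estimate $O(1/N)$ relies on the weight $e^{-\kappa_b y}$ being bounded near $0$, rather than degenerating like $y$. This point also confirms that the closure is the absorbing (Dirichlet-type) realization.
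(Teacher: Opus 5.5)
Your proposal is correct and follows the paper's proof essentially step for step: closability via local $H^1$ convergence, cutoff approximations showing $\ell\in D(\mathcal E_b)$, the integration by parts $\mathcal E_b(\ell,v)=\gamma_0\langle \ell,v\rangle_{L^2(m_b)}$ combined with the representation theorem, and the ground-state identity for $u=yh$ extended by closure. The differences are only in the order of the steps and your one-parameter cutoff in place of the paper's $\ell_{\varepsilon,R}$; in your closing remark, the point that matters is that the derivative weight is bounded above near $0$, since a weight degenerating like $y$ would only make the cutoff cheaper.
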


\begin{proof}
Suppose $u_j\to0$ in $L^2(m_b)$ and $u_j'$ converges in
$L^2(e^{-\kappa_b y}dy)$. On every compact subinterval of
$(0,\infty)$, both weights are bounded above and below by positive
constants. The limit of the derivatives is therefore zero in the
sense of distributions, and hence almost everywhere. This proves
closability.

For $u=yh\in C_c^\infty((0,\infty))$, integration by parts gives
\[
\mathcal E_b(u,u)-\gamma_0\|u\|_{L^2(m_b)}^2
=\int_0^\infty y^2e^{-\kappa_b y}|h'(y)|^2\,dy\ge0.
\]
The lower bound extends to the full form domain by closure.

To verify that $\ell(y)=y$ belongs to $D(\mathcal E_b)$, fix
$\xi\in C^\infty(\mathbb R)$ with $0\le\xi\le1$, $\xi=0$ on
$(-\infty,1]$, $\xi=1$ on $[2,\infty)$, and
$\|\xi'\|_\infty\le2$. For $0<\varepsilon<1$ and $R>2$, set
\[
\ell_{\varepsilon,R}(y)
:=y\xi(y/\varepsilon)\bigl(1-\xi(y/R)\bigr).
\]
This function lies in $C_c^\infty((0,\infty))$ and agrees with
$\ell$ on $[2\varepsilon,R]$. Direct integration gives
\[
\|\ell_{\varepsilon,R}-\ell\|_{L^2(m_b)}^2
+\int_0^\infty e^{-\kappa_b y}
 |\ell_{\varepsilon,R}'(y)-1|^2\,dy
=O\bigl(\varepsilon+(1+R)e^{-\kappa_b R}\bigr),
\]
where the $O$ estimate refers to $\varepsilon\downarrow0$ and
$R\to\infty$, uniformly in both parameters. Its implicit constant
depends only on $\alpha_0,\beta_0$ and the fixed profile $\xi$.
These limits give convergence to
$\ell$ in $L^2(m_b)$ and show that these approximations are Cauchy
in form norm. By closedness, $\ell\in D(\mathcal E_b)$.
For every $v\in C_c^\infty((0,\infty))$, integration by parts yields
\[
\mathcal E_b(\ell,v)
=\kappa_b\int_0^\infty e^{-\kappa_b y}v(y)\,dy
=\gamma_0\langle\ell,v\rangle_{L^2(m_b)}.
\]
By density, this identity holds for every $v\in D(\mathcal E_b)$.
The representation theorem for closed forms therefore gives
$\ell\in D(\mathcal H_b^{\mathrm{diff}})$ and
$\mathcal H_b^{\mathrm{diff}}\ell=\gamma_0\ell$.
Together with the form lower bound, this identifies the spectral
bottom as $\gamma_0$.
\end{proof}

\end{document}